\newtheorem{definition}{Definition}
\newtheorem{conjecture}{Conjecture}
\newtheorem{theorem}{Theorem}
\newtheorem{proposition}{Proposition}
\newtheorem{lemma}{Lemma}
\newtheorem{corollary}{Corollary}
\newenvironment{proof}{\noindent \emph{Proof. }}{\hfill \hbox{\rlap{$\sqcap$}$\sqcup$}\\}
\title{When Periodicities Enforce Aperiodicity\thanks{This work was supported by the ANR project QuasiCool (ANR-12-JS02-011-01)}}
\author{
Nicolas Bédaride
\footnote{Aix Marseille Univ., CNRS, Centrale Marseille, I2M, UMR 7373, 13453 Marseille, France.}
\and
Thomas Fernique
\footnote{Univ. Paris 13, CNRS, Sorbonne Paris Cité, UMR 7030, 93430 Villetaneuse, France.}
}
\date{}
\begin{document}
\maketitle
\vspace{-5mm}

\begin{abstract}
Non-periodic tilings and local rules are commonly used to model the long range aperiodic order of quasicrystals and the finite-range energetic interactions that stabilize them.
This paper focuses on planar rhombus tilings, that are tilings of the Euclidean plane which can be seen as an approximation of a real plane embedded in a higher dimensional space.
Our main result is a characterization of the existence of local rules for such tilings when the embedding space is four-dimensional.
The proof is an interplay of algebra and geometry that makes use of the rational dependencies between the coordinates of the embedded plane.
We also apply this result to some cases in a higher dimensional embedding space, notably tilings with $n$-fold rotational symmetry.
\end{abstract}

\vspace{-5mm}
\tableofcontents

\section{Introduction}

A {\em tiling} is a covering of some space by non-overlapping compact sets called {\em tiles}.
{\em Local rules} are constraints on the way neighbour tiles of a tiling can fit together.
Jigsaw puzzles provide a graphic example, with the dents and bumps as local rules.
Tilings and local rules only relatively recently went beyond recreational mathematics.
The first major step occured in the early 60's, when the logician Hao Wang asked in \cite{wang} if there exists an algorithm which decides whether any given finite set of tiles can tile the whole plane (each tile can be used several times).
His student Robert Berger gave a negative answer in \cite{berger}, with a key ingredient of his proof being the first-ever tile set which can tile the plane but only in a non-periodic way (that is, no tiling is invariant by a non-trivial translation).
Such tile sets are said to be {\em aperiodic}.
Some other examples were since then discovered, with the most celebrated one probably being the Penrose tiles \cite{penrose2}.
The second major step occured twenty years later, with the ground-breaking discovery by Dan Shechtman of {\em quasicrystals}, that are non-periodic but nevertheless ordered materials \cite{shechtman}.
The link with (ordered) non-periodic tilings was indeed quickly done, with local rules modeling the energetic finite-range interactions between atoms \cite{LS}.
A primordial issue in mathematical physics then became to determine which type of quasicrystalline structure can exist.\\

In this paper, we focus on the case of the rhombus tilings of the plane.
Such tilings can indeed profitably be seen as digitizations of surfaces in higher dimensional spaces.
Among them are the plane digitizations obtained by the so-called {\em canonical cut and projection} method (see \cite{debruijn,GR}).
They are said to be {\em planar} and aim to model the long-range order of quasicrystals.
This connects the algebraic parameters of a plane with the geometry of its digitization, and one of main issues is: which planes have a digitization characterized by local rules?
Note that rhombus tilings are surely far from comprising all the existing tilings, but they nevertheless provide a large family which can notably model all the quasicrystalline symmetries yet experimentally observed, with the exception of the icosahedral one (which requires rhombohedra tilings of the three-dimensional space; we actually checked that our method extends to this specific case).\\

We also focus on a special kind of local rules, namely {\em uncolored weak} ones.
Formally, local rules can be expressed as a finite set of finite patterns that must be avoided ({\em forbidden patterns}).
They are {\em colored} when the same tile can appear in different colors in the forbidden patterns, thus playing different roles.
Colored local rules are more powerful but also less realistic from the physical viewpoint (the model is even more complicated).
This could explain why uncolored ones have retained the attention of many authors.
{\em Weak} local rules have been introduced for  planar rhombus tilings by Leonid Levitov in \cite{levitov}, as opposed to {\em strong} local rules.
Whereas strong local rules enforce a tiling to be a specific digitization of a plane, weak ones only require the digitization to stay at bounded distance from a plane.
In other words, weak local rules allow short-range disorder (some authors speak about {\em bounded perp-space fluctuations}).\\

So, which planar rhombus tilings do admit weak uncolored local rules?
This has been an issue of much debate in mathematical physics during the early 90's (see, {\em e.g.}, \cite{burkov,levitov, le92,le92b,le92c,le93,le95,le95b,socolar}).
Several conditions have been found, usually stated in terms of algebraic properties of the digitized plane, but no complete characterization has yet emerged.
At a minimum, examples of non-periodic tilings do exist ({\em e.g.}, the Penrose tilings).
At the other extreme, Thang Le has proved in \cite{le95} that digitized plane admitting weak uncolored local rules are always generated by vectors whose entries are algebraic numbers.
Let us mention, in comparison, that a plane digitization is proven in \cite{FS} to admit weak {\em colored} local rules if and only if the plane can be generated by {\em computable} vectors (that is, their entries can be computed to within any desired precision by a finite, terminating algorithm).\\

Our results are along those lines.
We rely on the notion of {\em subperiod}, which is related to the {\em SI-condition} introduced by Leonid Levitov in \cite{levitov}.
The idea is that the vectors which generate an irrational plane can nevertheless have rational depencies between their entries.
The subperiods catch such kind of dependencies, which turn out to easily translate into equations on the Grassmann coordinates of the plane, yielding a system of polynomial equations.
The point is that the subperiods can be enforced by specific local rules.
Our main result, roughly stated, is that the existence of such specific local rules is equivalent to the zero-dimensionality of the corresponding system of polynomial equations (at least when the digitized plane lives in $\mathbb{R}^4$, Corollary~\ref{cor:local_rules_codim2}).
Actually, a fruitful approach turned out to isolate the subquestion of planarity: when do the local rules associated with the subperiods of a plane allow only rhombus tilings which are digitizations of (any) planes?
Theorem~\ref{th:planarity_codim2} gives an answer when, again, the plane lives in $\mathbb{R}^4$.
This is further used in higher dimensional cases (Proposition \ref{prop:n_fold_planarity} and Corollary~\ref{cor:local_rules_n_fold}).
The general goal of reducing the existence of local rules to the resolution of a system of equations remains to be achieved.\\

The paper is organized as follows.
Section \ref{sec:settings} gives formal definitions of the above mentioned notions (planar rhombus tilings, local rules, subperiods\ldots).
In Section \ref{sec:codim2}, we consider plane digitization in $\mathbb{R}^4$, we state and prove the main result (Theorem~\ref{th:planarity_codim2}), as well as provide illustrative examples.
We show in Section \ref{sec:higher_codim} how this result can, under some conditions, be extended to higher dimensional spaces.
In particular, we apply our method to so-called {\em $n$-fold tilings}, which play a prominent role in modelling quasicrystals.
We show that there are local rules when $n$ is an odd multiple of $5$ or $7$ (Corollary~\ref{cor:local_rules_n_fold}).
Although it has been already proven by Joshua Socolar \cite{socolar} that local rules exist for any odd $n$, our proof is more algebraic and does not rely on the specific geometry of $n$-fold tilings.
Moreover, we also get some new cases if we in addition allow a {\em minimization principle} (Proposition~\ref{prop:even_fold_minimization}).

\section{Settings}
\label{sec:settings}

\subsection{Planar rhombus tilings}
\label{sec:rhombus_tilings}

Let $\vec{v}_1,\ldots,\vec{v}_n$ be $n\geq 3$ pairwise non-collinear unitar vectors of the Euclidean plane.
They define the $\binom{n}{2}$ rhombus {\em prototiles}
$$
T_{ij}=\{\lambda\vec{v}_i+\mu\vec{v}_j~|~0\leq\lambda,\mu\leq 1\}.
$$
A {\em tile} is a translated prototile (tile rotation or reflection are forbidden).
A {\em rhombus tiling} is a covering of the Euclidean plane by interior-disjoint tiles satisfying the {\em edge-to-edge} condition: whenever the intersection of two tiles is not empty, it is either a vertex or an entire edge.
It is said to be {\em non-degenerated} if each tile $T_{ij}$ appears at least one time.
A {\em pattern} is a finite (usually connected) union of tiles which appears in some tiling.\\

Let $\vec{e}_1,\ldots,\vec{e}_n$ be the canonical basis of $\mathbb{R}^n$.
Following Levitov \cite{levitov}, a rhombus tiling is {\em lifted} in $\mathbb{R}^n$ as follows: an arbitrary vertex is first mapped onto the origin $\mathbb{R}^n$, then each tile $T_{ij}$ is mapped onto the $2$-dimensional face of a unit hypercube of $\mathbb{Z}^n$ generated by $\vec{e}_i$ and $\vec{e}_j$, with two tiles adjacent along an edge $\vec{v}_i$ being mapped onto two faces adjacent along an edge $\vec{e}_i$.
This lifts the boundary of a tile -- and by induction the boundary of any patch of tiles -- onto a closed curve of $\mathbb{R}^n$ and hence ensures that the image of a tiling vertex do not depends on the path followed to get from the origin to this vertex.
The lift of a tilings is thus a ``stepped'' surface of codimension $n-2$ in $\mathbb{R}^n$ (unique up to the choice of the initial vertex).
By extension, such a rhombus tiling is said to have codimension $n-2$.\\

The lift is the graph of a function from $\mathbb{R}^2$ to $\mathbb{R}^n$ which is Lipschitz continuous, with a Lipschitz constant that can be chosen to depend only on the $\vec{v}_i$'s.
Indeed, the limit in how fast this function can change between two points in a tile depends only on the way this tile is lifted in $\mathbb{R}^n$, and this then extends to any two points of the tiling.
Given the tiles, the set of lifts of all the possible tilings of the plane are thus uniformly Lipschitz continuous.\\

A rhombus tiling is said to be {\em planar} if there is $t\geq 1$ and an affine plane $E\subset \mathbb{R}^n$ such that the tiling can be lifted into the tube $E+[0,t]^n$ (we need $t\geq 1$ to have tiles into the tube).
The smallest suitable $t$ is called the {\em thickness} of the tiling, and the corresponding $E$ is called the {\em slope} of the tiling.
Both are uniquely defined.
Following Levitov \cite{levitov}, one speaks about strong or weak planarity depending on whether $t=1$ or $t>1$.
A planar rhombus tiling is thus an {\em approximation} of its slope: the less the thickness, the better the approximation.\\

Figure~\ref{fig:codim1} illustrates this in the codimension one case.
Examples in higher codimensions shall be further provided.\\

\begin{figure}[hbtp]
\includegraphics[width=0.31\textwidth]{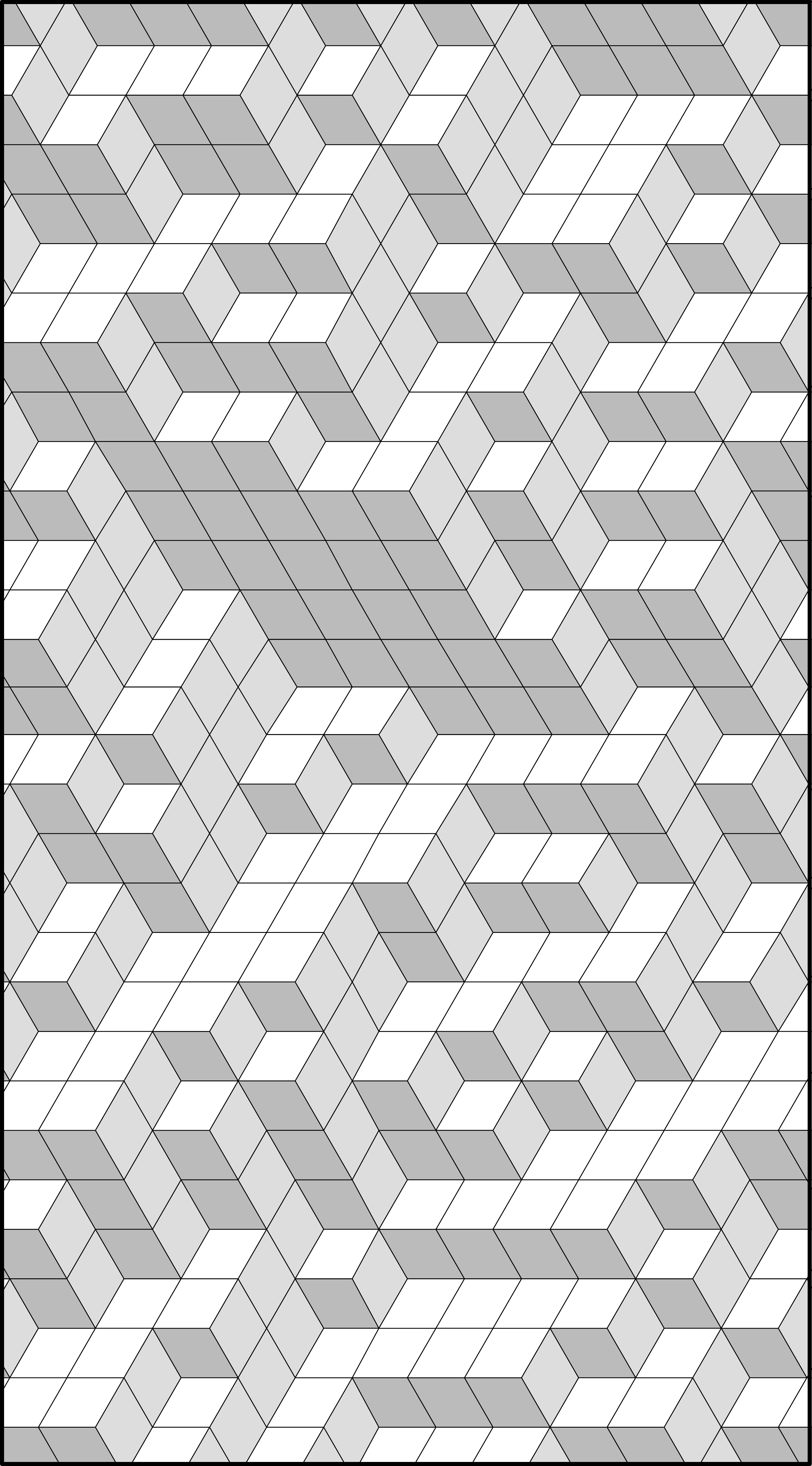}
\hfill
\includegraphics[width=0.31\textwidth]{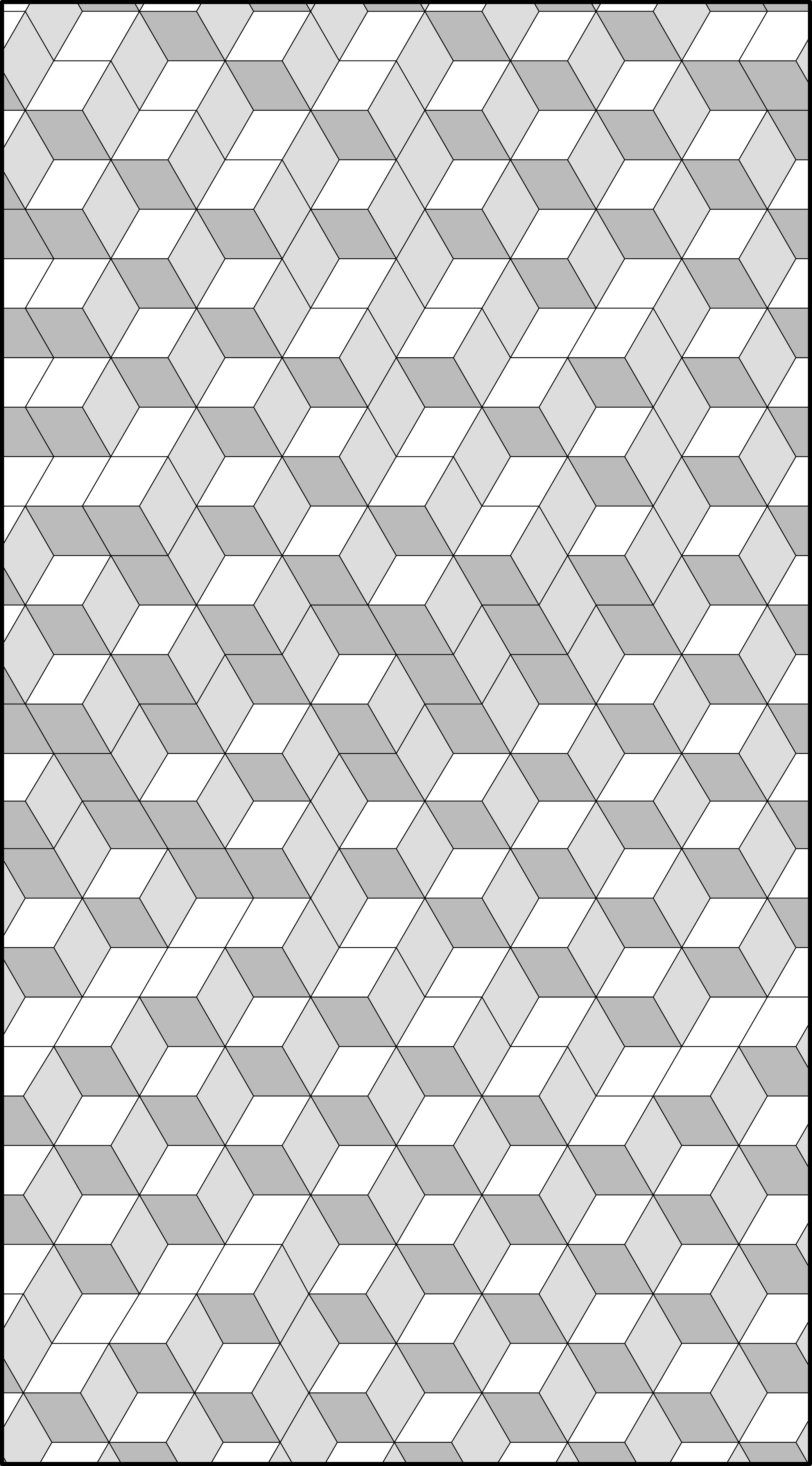}
\hfill
\includegraphics[width=0.31\textwidth]{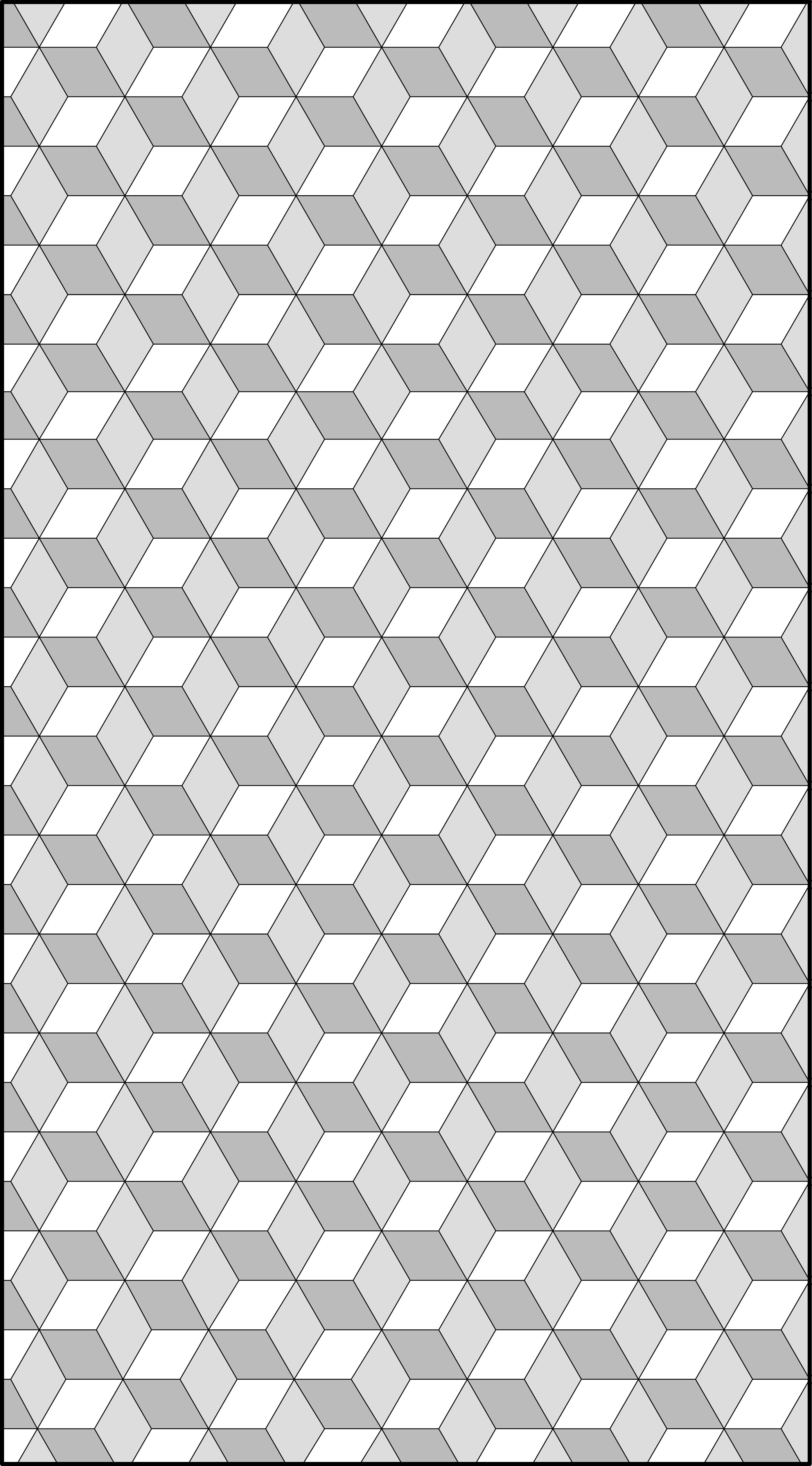}
\caption{
Planar codimension one rhombus tiling with decreasing thickness (from left to right).
Tile are colored to help to visualize the lift in $\mathbb{R}^3$.}
\label{fig:codim1}
\end{figure}

Strongly planar rhombus tilings are also referred to as {\em canonical cut and project tilings}.
They are {\em uniformly recurrent}, that is, whenever a pattern occurs once, there exists $R\geq 0$ such that this pattern reoccurs at distance at most $R$ from any point of the tiling.
Weakly planar rhombus tilings are not necessarily uniformly recurrent.
Nevertheless, in any planar rhombus tiling, the ratio of a given prototile among the prototiles occuring at distance at most $R$ from a point of the tiling admits a limit when $R$ goes to infinity, called its {\em frequency}, which depends only on the slope (see Prop.~\ref{prop:frequences}).

\subsection{Local rules}
\label{sec:local_rules}

Draw a disk of diameter $r$ on a tiling and consider the pattern formed by all the tiles which intersect this disk: this is called a {\em $r$-map} of the tiling.
The {\em $r$-atlas} of a tiling is the set of all its $r$-maps.
We use this to define the weak uncolored local rules mentioned in the introduction, that we shall simply refer to as local rules since they are the only type further considered:

\begin{definition}\label{def:local_rules}
A strongly planar rhombus tiling of slope $E$ is said to admit {\em local rules} of {\em diameter} $r$ and {\em thickness} $t$ if, whenever its $r$-atlas contains the $r$-atlas of another rhombus tiling, this latter is planar with slope $E$ and thickness at most $t$.
By extension, the slope $E$ itself is said to admit local rules.
\end{definition}

When a tiling admits local rules of diameter $r$, the patterns of the $r$-atlas are themselves called local rules.
Since $r$-atlas of rhombus tilings are finite, it is equivalent and sometimes more convenient to define local rules by giving a set of patterns which are not allowed to occur in these local rules.
These patterns are said to be {\em forbidden}.\\

As for planar rhombus tilings, one speaks about strong or weak local rules depending on whether $t=1$ or $t>1$.
This paper aims to characterize the slopes which admit local rules.
Before focusing on totally irrational slopes, let us first dispose of the matter on slopes which contain rational directions.
Consider, first, the case of a {\em rational slope}:

\begin{proposition}\label{prop:ratio2}
A slope with two rational directions admits strong local rules.
\end{proposition}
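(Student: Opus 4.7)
The plan is to construct a doubly periodic strongly planar tiling of slope $E$, and then to invoke the folklore fact that any doubly periodic rhombus tiling admits strong local rules.

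First I extract integer periods. Since the linear direction of $E$ contains two linearly independent rational vectors, clearing denominators yields two linearly independent integer vectors $\vec{p}_1, \vec{p}_2 \in \mathbb{Z}^n$ parallel to $E$. The tube $E + [0,1]^n$ and the integer hypercube structure of $\mathbb{Z}^n$ are both preserved by translations along $\vec{p}_1$ and $\vec{p}_2$, so a standard canonical cut-and-projection construction (with the origin chosen generically, which is possible after a harmless translation) produces a stepped surface $\tilde T \subset E + [0,1]^n$ invariant under the lattice $\Lambda = \mathbb{Z}\vec{p}_1 + \mathbb{Z}\vec{p}_2$. Its projection $T$ is then a strongly planar tiling of slope $E$, doubly periodic with periods $\vec{p}_1, \vec{p}_2$ (viewed as vectors of $E$).

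Second, I show $T$ admits strong local rules. Pick a diameter $r$ large enough that any $r$-disk in the plane contains a whole fundamental domain of $\Lambda$ together with a margin of width at least $|\vec{p}_1|+|\vec{p}_2|$ around it. Let $T'$ be a rhombus tiling whose $r$-atlas is contained in that of $T$; I claim that $T' = T + \vec{t}$ for some translation $\vec{t}$. Fix a vertex $x$ of $T'$ and let $M$ be its $r$-map; by assumption, $M$ equals the $r$-map of $T$ around some vertex $y$. Since $T$ is $\vec{p}_i$-periodic, the $r$-maps of $T$ around $y$ and around $y + \vec{p}_i$ agree on their (large) overlap. Because the $r$-map of $T'$ around $x + \vec{p}_i$ also appears in $T$, this forced overlap pins it down to be $M$ shifted by $\vec{p}_i$. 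Iterating, $T'$ is $\vec{p}_i$-periodic for $i=1,2$ and equals $T + (x-y)$ on one fundamental domain of $\Lambda$, hence on the whole plane.

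Therefore $T'$ is a translate of $T$, strongly planar of slope $E$ with thickness $1$, so $T$ (and hence $E$) admits strong local rules of diameter $r$. The main obstacle, to my mind, is not the local-rules step, which is a standard gluing, but the initial construction: one has to ensure the existence of a $\Lambda$-invariant stepped surface inside the tube, which may require shifting the origin so as to avoid boundary ambiguities of the window $[0,1]^n$ on the quotient torus $\mathbb{R}^n/\Lambda$. Once this periodic representative $T$ is in hand, the rest is a routine overlap argument.
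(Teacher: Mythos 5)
Your proof is correct and follows essentially the same route as the paper's: two rational directions yield two independent periods, hence a doubly periodic tiling with bounded fundamental domain, which is then pinned down (up to translation) by any $r$-atlas with $r$ exceeding that fundamental domain. You merely spell out the details the paper leaves implicit, namely the construction of the periodic representative and the overlap/synchronization argument.
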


\begin{proof}
To each rational direction of a slope corresponds a {\em period} of the corresponding strongly planar tilings, that is, a translation vector which leaves them invariant.
If there are two (independent) such directions, then the tilings have a bounded fundamental domain and it suffices to consider local rules whose dia\-me\-ter is greater than the one of this fundamental domain.
\end{proof}

\noindent Consider, now, the case of a slope which is neither rational nor irrational:

\begin{proposition}\label{prop:ratio1}
A slope with exactly one rational direction admits no local rules.
\end{proposition}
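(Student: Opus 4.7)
Suppose, for contradiction, that $E$ admits local rules of diameter $r$ and thickness $t$. Let $T$ be a strongly planar tiling of slope $E$; its translation group is $\vec p\,\mathbb{Z}$ for the primitive integer vector $\vec p$ generating the unique rational direction of $E$, and $T$ is not invariant under any independent translation. The plan is to construct a rhombus tiling whose $r$-atlas is contained in that of $T$ but whose lift escapes every tube $E+[0,t']^n$, contradicting Definition~\ref{def:local_rules}.

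The first step reduces the problem to one-dimensional symbolic dynamics via $\vec p$-periodicity. Decompose $T$ into a bi-infinite stack of bounded-width, $\vec p$-periodic strips, separated by $\vec p$-invariant edge-paths advancing in the $\vec p$-direction. The uniform Lipschitz control on lifts guarantees that only finitely many strip-types $\sigma_1,\ldots,\sigma_m$ occur up to $\vec p$-translation, each carrying a well-defined perpendicular integer increment $\vec q_i\in\mathbb{Z}^n$ (modulo the lift $\vec P$ of $\vec p$). Hence $T$ is encoded by a bi-infinite word $w$ over $\{\sigma_1,\ldots,\sigma_m\}$, the local rules become finitely many forbidden finite subwords defining a one-dimensional subshift of finite type $X\ni w$, and any rhombus tiling whose $r$-atlas sits inside that of $T$ encodes to some $w'\in X$. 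Such a tiling's lift stays within a tube $E+[0,t']^n$ precisely when the partial sums of the increments $\vec q_{w'_k}$ remain within bounded distance of the direction of $E$ in $\mathbb{R}^n$.

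The contradiction then follows from a rational/irrational dichotomy on the transition graph of $X$. Since $T$ advances transversely to $\vec p$, the encoding $w$ must traverse some simple cycle $c$ with $\vec q_c\notin\mathbb{Z}\vec P$ (nonzero transverse advance) infinitely often. The periodic word $c^{\infty}$ then lies in $X$ and yields a doubly periodic rhombus tiling of slope $E_c=\mathrm{span}(\vec P,\vec q_c)$. \emph{Case A:} for some such cycle $\vec q_c\notin\mathrm{dir}(E)$; then $E_c$ is not parallel to $E$ and the lift of $c^{\infty}$'s tiling escapes every tube around $E$, contradicting the local rules. \emph{Case B:} every transverse-advancing simple cycle satisfies $\vec q_c\in\mathrm{dir}(E)\cap\mathbb{Z}^n\setminus\mathbb{Z}\vec P$; but such a $\vec q_c$ then spans a rational direction of $E$ independent from $\vec p$, again contradicting the hypothesis that $E$ has exactly one rational direction.

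The main obstacle is the geometric first step: canonically producing $\vec p$-invariant bi-infinite edge-paths that advance in the $\vec p$-direction, bounding the strip widths uniformly, and proving finiteness of strip-types together with integrality of their perpendicular increments. These are routine Lipschitz and pigeonhole arguments, but require careful bookkeeping. Once the strip decomposition is set up, the symbolic dichotomy above is elementary and captures the intuition that a one-dimensional subshift of finite type can force neither an irrational transverse direction nor bounded discrepancy around an irrational affine plane.
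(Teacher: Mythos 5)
Your proof is correct in outline and ends with the same punchline as the paper's --- a doubly periodic tiling whose $r$-atlas is contained in that of $T$, which cannot have the irrational slope $E$ --- but you get there by a genuinely different route. The paper works metrically rather than combinatorially: it takes the ``stick'' of tiles within distance $r$ of the line $\mathbb{R}\vec p$ (automatically $\vec p$-periodic, no path-finding needed), uses \emph{uniform recurrence} of the strongly planar tiling to find a translate $\vec q$ at which this stick reoccurs, then applies the pigeonhole principle to the ``rungs'' $T_\lambda$ joining $\lambda\vec p$ to $\lambda\vec p+\vec q$ to find $\lambda_1\neq\lambda_2$ with $T_{\lambda_1}=T_{\lambda_2}$, and closes up the parallelogram on $\vec q$ and $(\lambda_2-\lambda_1)\vec p$ as a fundamental domain. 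You instead stack $\vec p$-periodic strips transversally, encode $T$ as a point of a one-dimensional SFT, and pigeonhole on cycles of the transition graph; your pigeonhole runs in the transverse direction where the paper's runs along $\vec p$, and you use only local finiteness where the paper uses uniform recurrence. What the paper's version buys is precisely the avoidance of your acknowledged ``main obstacle'': defining canonical $\vec p$-invariant separating edge-paths, bounding strip widths, and proving finiteness of strip types is the only nontrivial content of your write-up, and replacing combinatorial strips by metric neighborhoods of lines makes all of it disappear. Two smaller remarks: your Case A/Case B dichotomy is unnecessary, since any doubly periodic tiling has a fully rational slope and hence cannot be planar of slope $E$ (this single observation covers both cases, and is what the paper uses); and your transverse advance should be required to lie outside $\mathbb{R}\vec P$ rather than $\mathbb{Z}\vec P$, since an integer multiple of a non-primitive rational vector still spans the same rational direction.
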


\begin{proof}
Let $\mathcal{T}$ be a strongly planar tiling with exactly one rational direction.
Let $r>0$ be given.
Consider a period $\vec{p}$ of $\mathcal{T}$ and consider the pattern $S$ formed by the tiles at distance less than $r$ from the segment $\vec{p}$.
Since $\mathcal{T}$ is uniformly recurrent, there exists $\vec{q}\neq\vec{0}$ such that a translation by $\vec{q}$ maps $S$ onto one of its reoccurrences.
This yields two periodic parallel and equal ``sticks'' respectively formed by the tiles at distance less than $r$ from $\mathbb{R}\vec{p}$ and $\mathbb{R}\vec{p}+\vec{q}$.
Consider now the patterns $T_\lambda$ formed by the tiles at distance less than $r$ of the segment joining $\lambda\vec{p}$ and $\lambda\vec{p}+\vec{q}$, for $\lambda\in\mathbb{R}$.
Since there is a finite number of different patterns of a given size, there are $\lambda_1$ and $\lambda_2$ such that $T_{\lambda_1}=T_{\lambda_2}$.
Consider now the tiling $\mathcal{T}'$ with fundamental domain the parallelogram with vertices $\lambda_1\vec{p}$, $\lambda_1\vec{p}+\vec{q}$,  $\lambda_2\vec{p}+\vec{q}$ and  $\lambda_2\vec{p}$.
It has two rational directions and thus cannot have the slope of $\mathcal{T}$.
However, by construction, any $r$-map of $\mathcal{T}'$ is also a $r$-map of $\mathcal{T}$.
This shows that $\mathcal{T}$ does not admit local rules of any diameter $r$.
\end{proof}

\noindent The case on which we shall focus is thus the one of irrational slopes.

\subsection{Subperiods}
\label{sec:subperiods}

Let us introduce this central notion:

\begin{definition}\label{def:subperiod}
The {\em $ijk$-shadow} of a rhombus tiling is the orthogonal projection of its lift onto the space generated by $\vec{e}_i$, $\vec{e}_j$ and $\vec{e}_k$.
An {\em $ijk$-subperiod} of a rhombus tiling is a prime period
of its $ijk$-shadow, hence an integer vector.
A {\em lift} of such a subperiod is any vector of $\mathbb{R}^n$ which projects on it in the $ijk$-shadow.
\end{definition}

A rhombus tiling has thus $\binom{n}{3}$ shadows, which are codimension one surfaces in $\mathbb{R}^3$.
By extension, we call subperiods of a slope the subperiods of the strongly planar rhombus tiling with this slope.
Figure~\ref{fig:subperiods} illustrates the notion of subperiod, while Figure~\ref{fig:forcing_subperiods} illustrates the following proposition.\\

\begin{figure}[hbtp]
\includegraphics[width=0.31\textwidth]{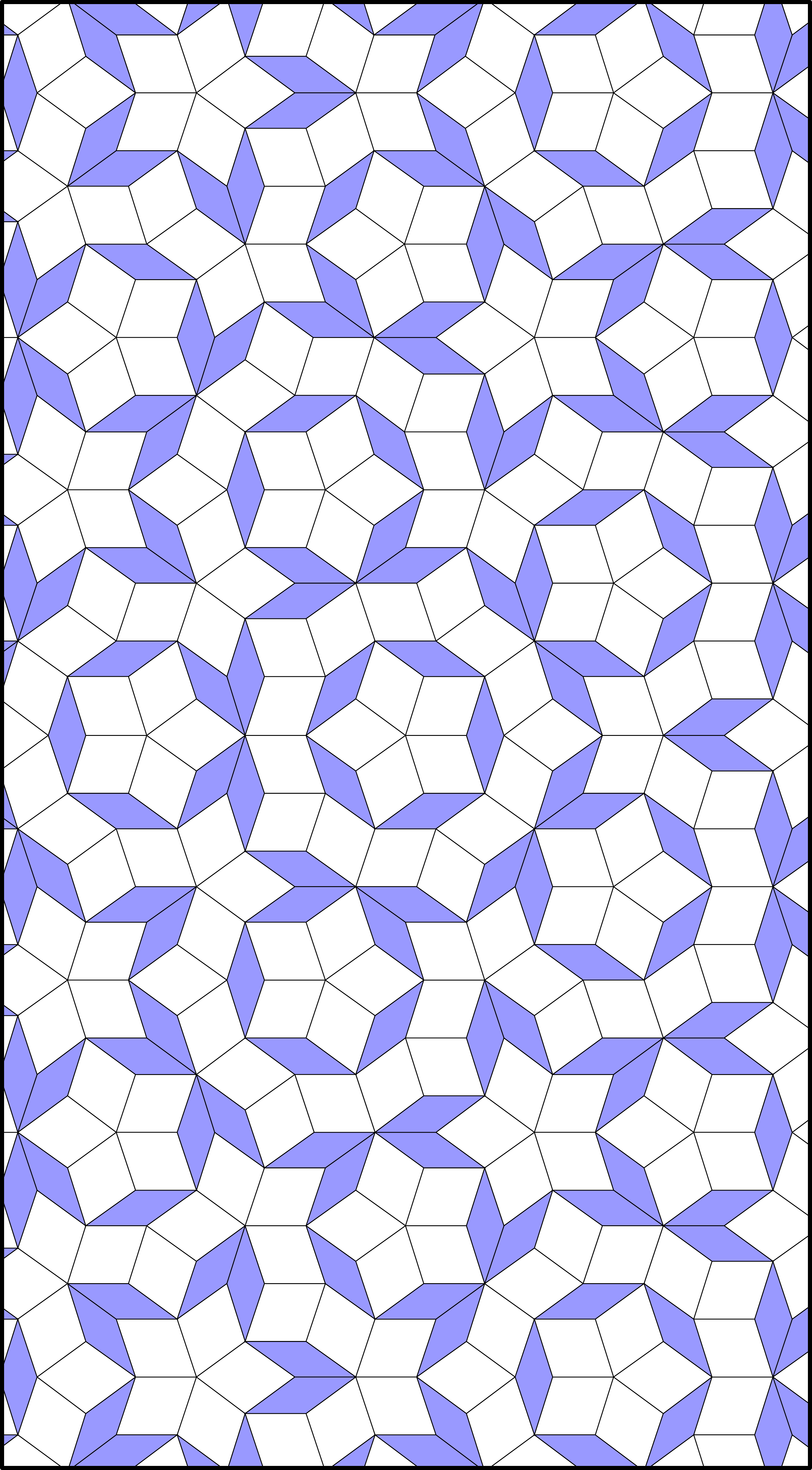}
\hfill
\includegraphics[width=0.31\textwidth]{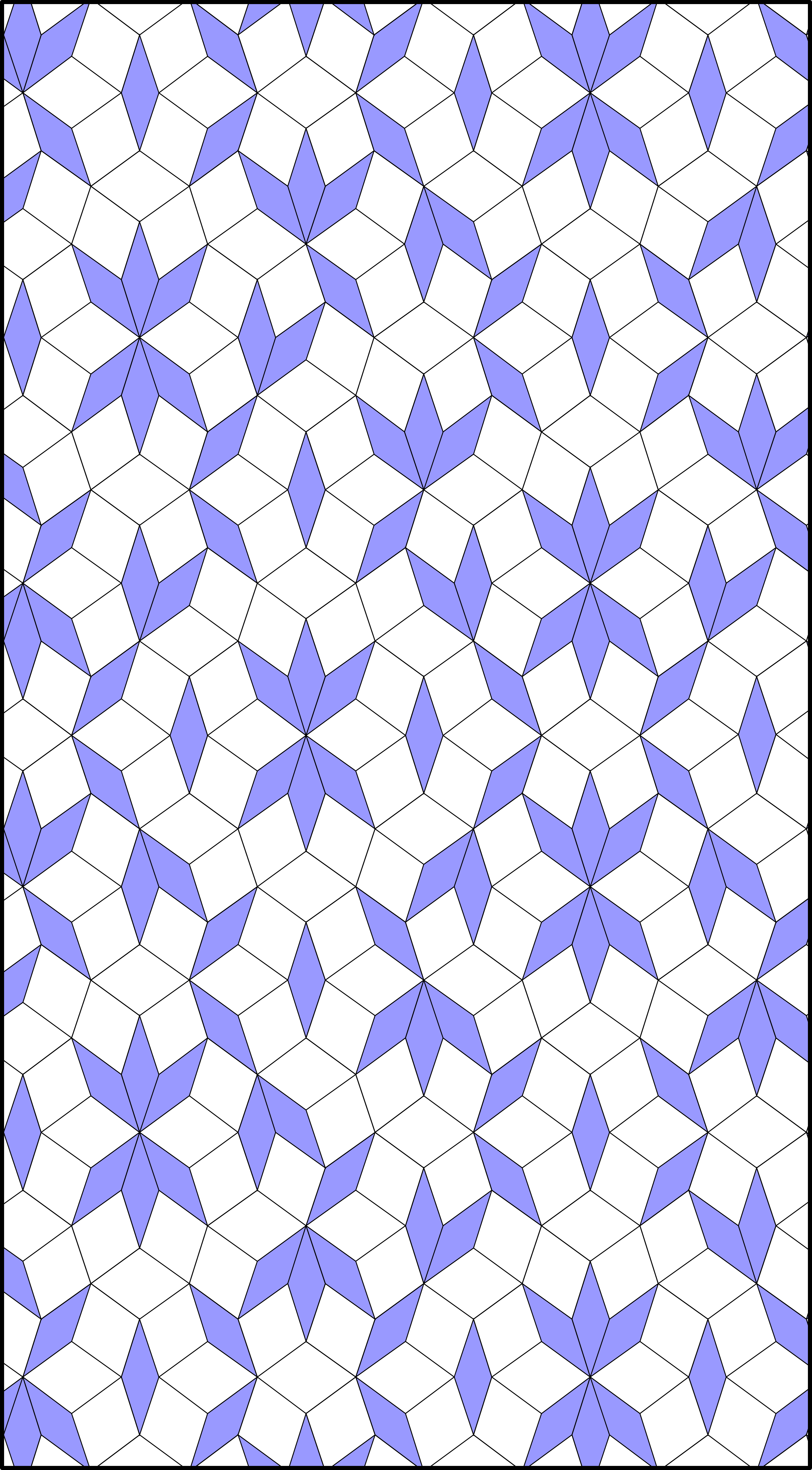}
\hfill
\includegraphics[width=0.31\textwidth]{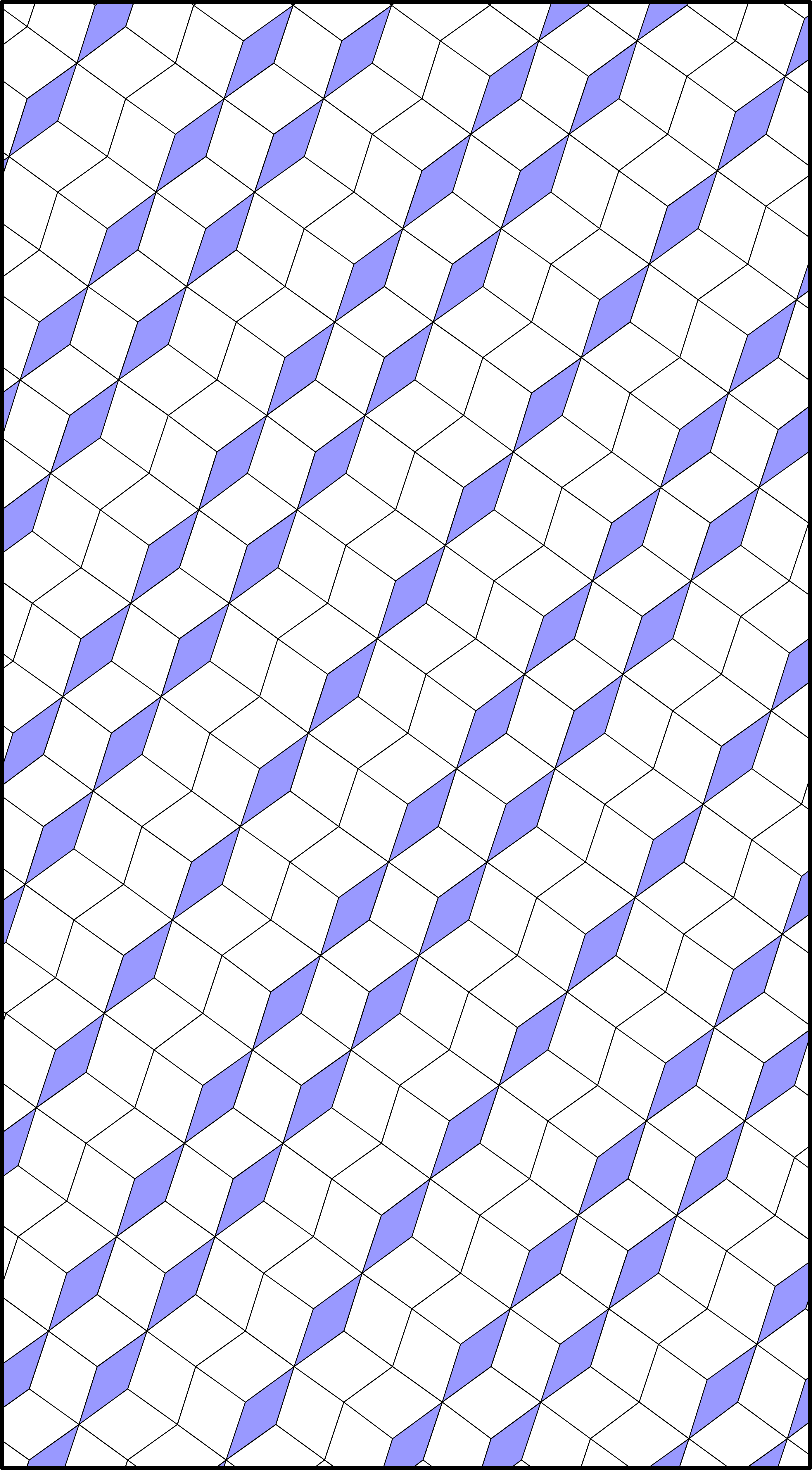}
\caption{A codim. $3$ tiling (left; actually it is a Penrose tiling). By orthogonally projecting along a basis vector of $\mathbb{R}^5$ we get a codim. $2$ tiling (center).
By orthogonally projecting along a second basis vector we get a codim. $1$ tiling (right) which is periodic: its prime period is a subperiod of the previous tilings.}
\label{fig:subperiods}
\end{figure}

\begin{proposition}\label{prop:subperiod_rules}
The subperiods of a slope can be enforced by local rules.
\end{proposition}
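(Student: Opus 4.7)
The plan is to take the local rules to be the $r$-atlas of the strongly planar tiling $\mathcal{T}$ of slope $E$ itself, for $r$ chosen sufficiently large, and to show that every subperiod of $E$ is automatically a period of the corresponding shadow of any tiling $\mathcal{T}'$ satisfying these rules.

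First I would fix a triple $\{i,j,k\}$ and let $\vec{p}$ be the corresponding subperiod. The $ijk$-shadow $S$ of $\mathcal{T}$ is, by definition, a codim-$1$ stepped surface in $\mathrm{span}(\vec{e}_i,\vec{e}_j,\vec{e}_k)$ that is invariant under translation by $\vec{p}$. Since $S$ is a Lipschitz graph above an affine plane, this $\mathbb{R}^3$-invariance is equivalent to a $2$-dimensional translational symmetry of the rhombus tiling structure of $S$ by some vector $\vec{p}'$, and this in turn lifts back to a translation on the tiling plane of $\mathcal{T}$. The key observation is that this periodicity of $S$ is locally verifiable: for $r$ sufficiently large compared to $|\vec{p}'|$ (depending only on the uniform Lipschitz constant of lifts from Section~\ref{sec:rhombus_tilings}), every $r$-map $M$ of $\mathcal{T}$ centered at a position $x$ contains both $x$ and $x+\vec{p}'$ together with their neighborhoods of size sufficient to determine the $ijk$-shadow there, and by $\vec{p}$-periodicity of $S$ these two shadow-neighborhoods are translates of one another within $M$.

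This translation-matching is an intrinsic property of $M$ and is therefore inherited by every $r$-map of any tiling $\mathcal{T}'$ whose $r$-atlas is contained in that of $\mathcal{T}$. Covering $\mathcal{T}'$ by overlapping $r$-maps then yields local $\vec{p}'$-periodicity of its $ijk$-shadow at every point, which propagates to global $\vec{p}$-periodicity of the shadow. Choosing $r$ large enough to simultaneously accommodate all $\binom{n}{3}$ triples $\{i,j,k\}$ produces a finite set of allowed patches, i.e., a local rule, enforcing every subperiod at once. The main subtlety I anticipate is the comparison between distances in the tiling plane and in the parameterizing plane of the shadow (tiles not of type $T_{ab}$ with $\{a,b\}\subset\{i,j,k\}$ project to lower-dimensional pieces and need to be accounted for), but this is a routine consequence of the uniform Lipschitz bounds for lifts recalled in Section~\ref{sec:rhombus_tilings}.
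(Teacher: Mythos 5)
Your proposal is correct and follows essentially the same route as the paper: the local rules are the $r$-atlas of the planar tiling for $r$ large enough that each $r$-map determines a patch of each shadow whose diameter exceeds the corresponding subperiod, so that $\vec{p}$-periodicity of the shadow becomes a locally checkable property inherited by any tiling with the same $r$-atlas. The paper merely separates the two scales (an $r$-atlas in the three-dimensional shadow enforcing its period, then an $R$-atlas upstairs whose projections cover it, via uniform recurrence) and, like you, leaves the Lipschitz bookkeeping implicit; only take care that the shadow's period does not lift to a translational symmetry of $\mathcal{T}$ itself, merely to a bounded displacement between matching shadow neighborhoods.
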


\begin{proof}
Let $\vec{p}\in\mathbb{R}^3$ be a subperiod of a slope $E\subset\mathbb{R}^n$ and $\pi$ the orthogonal projection on basis vectors such that $\vec{p}\in\pi(E)$.
The union $\mathcal{A}$ of the $r$-atlases of strongly planar rhombus tilings of slope $\pi(E)$ enforce their $\vec{p}$-periodicity as soon as $r\geq ||\vec{p}||$.
Then, the uniform recurrence of the strongly planar rhombus tilings of slope $E$ ensures that there is $R\geq 0$ such that the image under $\pi$ of the union $\mathcal{B}$ of their $R$-atlases contains all the patterns of $\mathcal{A}$.
Now, if a rhombus tiling has a $R$-atlas included in $\mathcal{B}$, then its image under $\pi$ has a $r$-atlas included in $\mathcal{A}$ and thus admits $\vec{p}$ as a period.
Hence, by definition, the initial tiling admits $\vec{p}$ as a subperiod (enforced by local rules of diameter $R$).
\end{proof}

\begin{figure}[hbtp]
\includegraphics[width=0.31\textwidth]{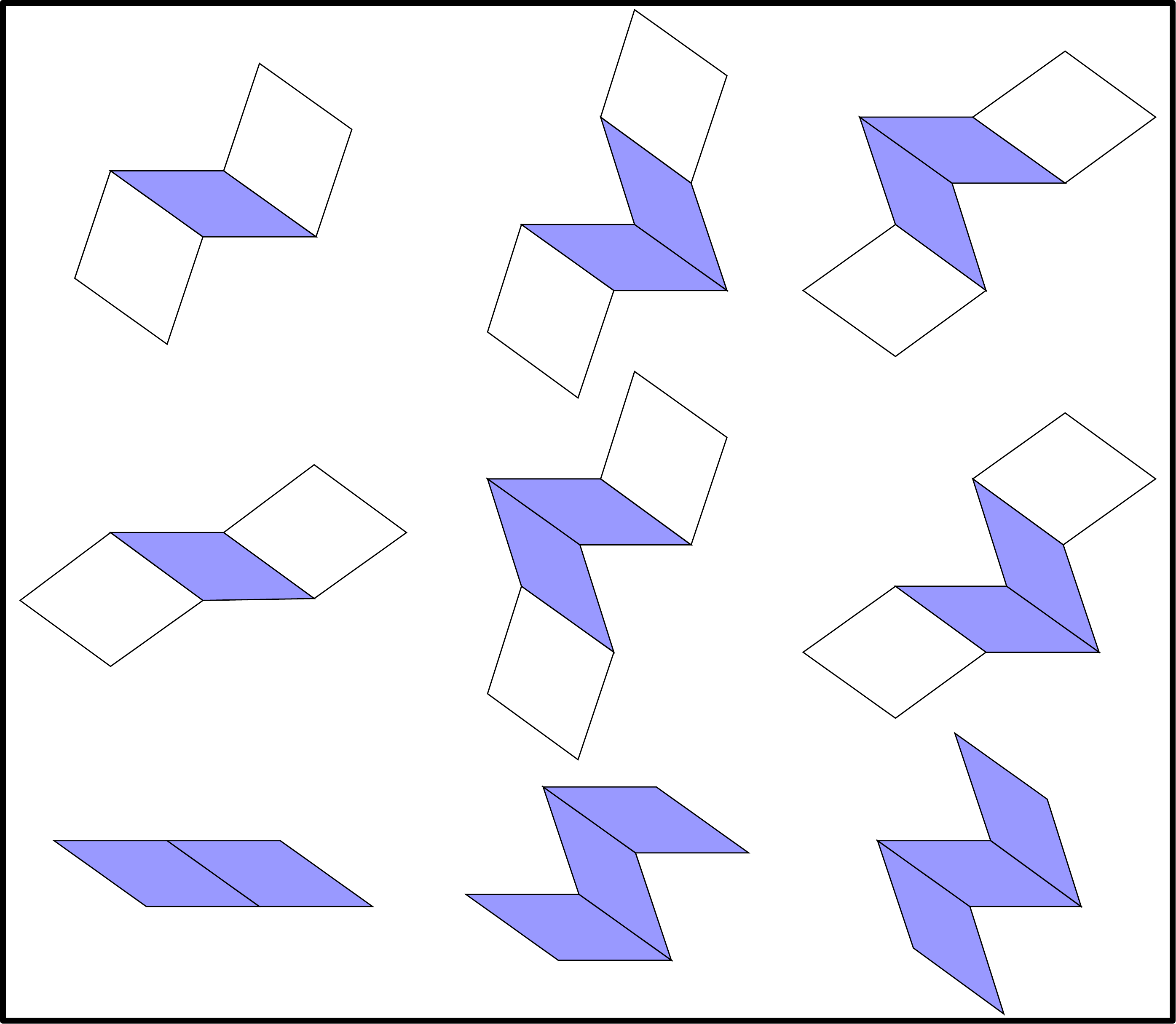}
\hfill
\includegraphics[width=0.31\textwidth]{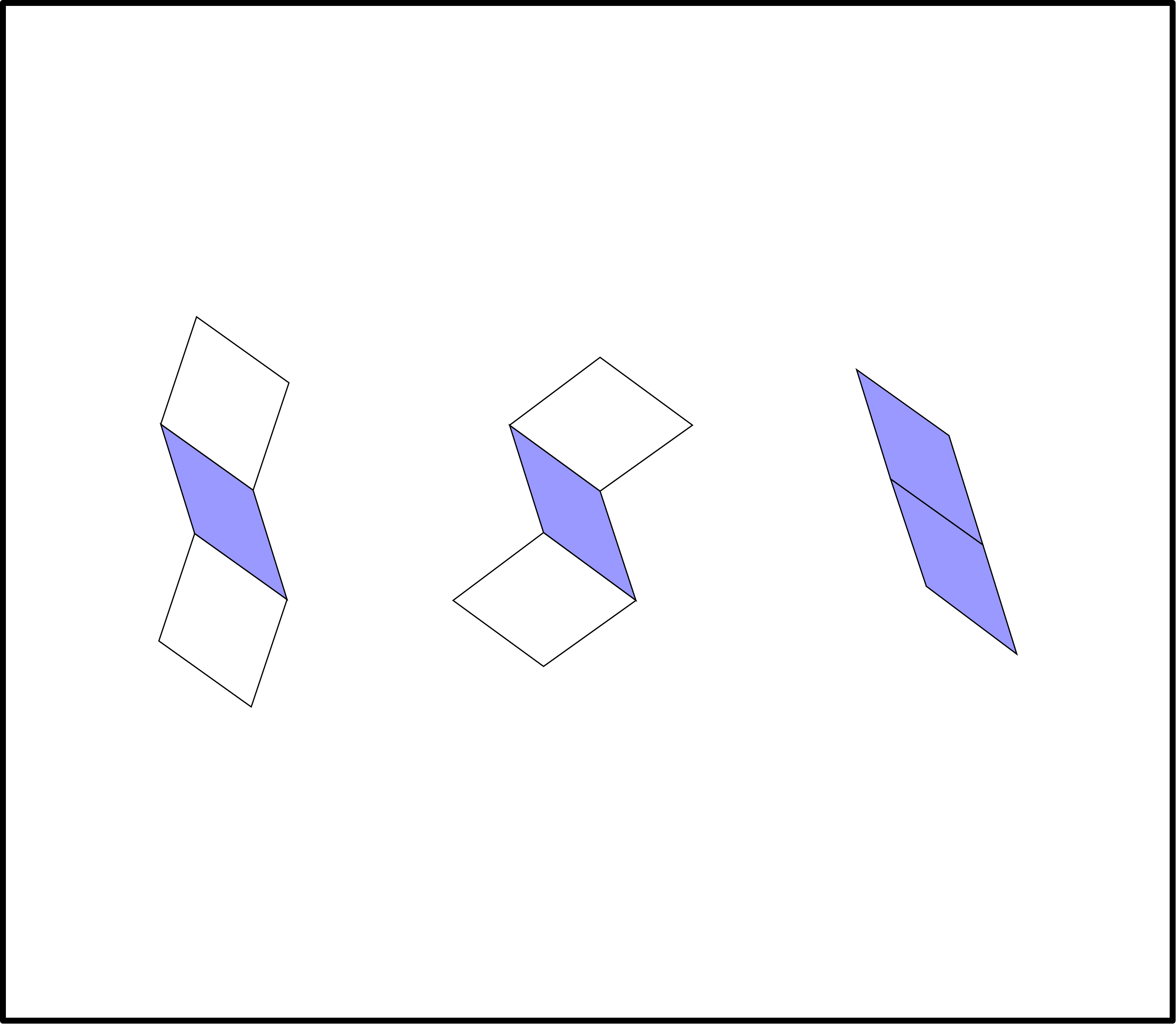}
\hfill
\includegraphics[width=0.31\textwidth]{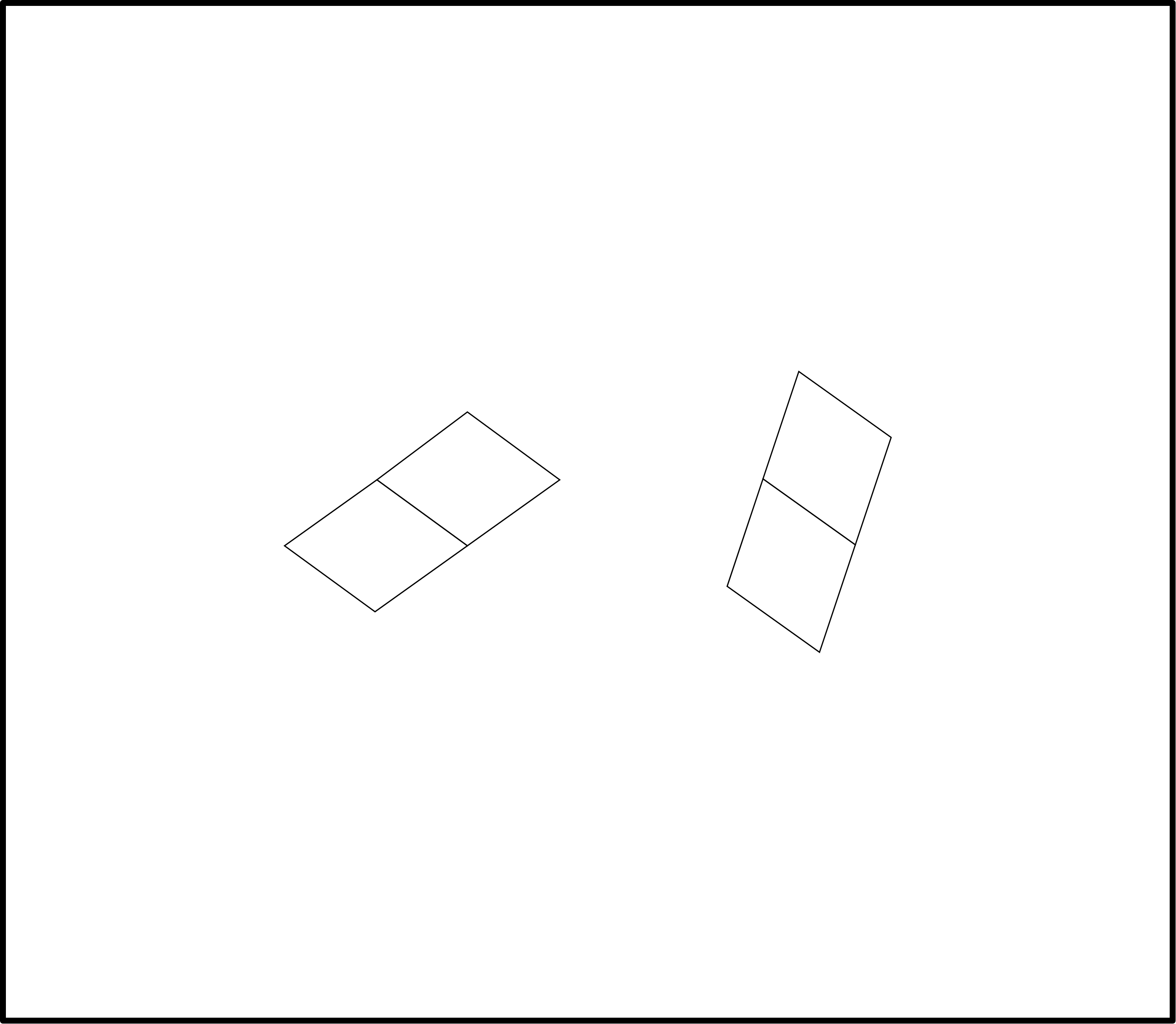}
\caption{
A codim. $1$ rhombus tiling without any occurence of the two forbidden patterns in the rightmost box must have the same period as the rightmost tiling on Fig.~\ref{fig:subperiods}.
Both tilings can however greatly differ.
In particular, these two forbidden patterns do not enforce planarity.
Then, a codim. $2$ (resp. codim. $3$) rhombus tiling which, in addition, avoids the forbidden patterns in the central box (resp. in all the boxes) must have the same subperiod as the central tiling (resp. the rightmost tiling) on Fig.~\ref{fig:subperiods}.
Not all the tilings with the same subperiod as the tilings on Fig.~\ref{fig:subperiods} are allowed, but at least these latter do.
}
\label{fig:forcing_subperiods}
\end{figure}

\subsection{Grassmann coordinates}
\label{sec:grassmanniennes}

Let us recall the notion of {\em Grassmann coordinates} in our particular case (for a general presentation, see, {\em e.g.}, \cite{HoP}, Chap.~7).
The Grassmann coordinates of a plane $E$ generated by $(u_1,\ldots,u_n)$ and $(v_1,\ldots,v_n)$ are the $\binom{n}{2}$ real numbers
$$
G_{ij}:=u_iv_j-u_jv_i,
$$
for $i<j$.
We write $E=(G_{ij})_{i<j}$, with the Grassmann coordinates being ordered by lexicographic order on their indices.
Grassmann coordinates are defined up to a common multiplicative factor and turn out to not depend on the choice of the generating vectors.
They are moreover characterized: a non-zero $\binom{n}{2}$-tuple of reals are the Grassmann coordinates of some plane if and only if they satisfy the $\binom{n}{4}$ following quadratic equations, called Plücker relations:
$$
G_{ij}G_{kl}=G_{ik}G_{jl}-G_{il}G_{jk},
$$
for $i<j<k<l$.
By extension, we call Grassmann coordinates of a planar rhombus tiling the Grassmann coordinates of its slope; they can actually be ``read'' on the tiles:

\begin{proposition}\label{prop:frequences}
The frequency of $T_{ij}$ in a planar rhombus tiling is $\frac{|G_{ij}|}{\sum_{k<l}|G_{kl}|}$.
\end{proposition}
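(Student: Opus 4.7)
My plan is to compute the density $n_{ij}$ of tiles of type $T_{ij}$ per unit area of the slope $E$; the count-frequency then follows by dividing by $\sum_{k<l}n_{kl}$. The strategy uses two successive orthogonal projections of the lift $\mathcal{L}$: first onto $E$, then onto the coordinate plane spanned by $e_i$ and $e_j$. Since the count-frequency is a combinatorial invariant of the tiling, I am free to carry out the count in the projected tiling $\pi_E(\mathcal{L})\subset E$ rather than in the physical Euclidean realization.

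I begin by fixing an orthonormal basis $(u,v)$ of $E$. Then $\pi_E(e_i)$ has $(u,v)$-coordinates $(u_i,v_i)$, so the orthogonal projection of the unit $(e_i,e_j)$-face onto $E$ is a parallelogram of area $|u_iv_j-u_jv_i|=|G_{ij}|$, and the Gram identity yields the key normalization $\sum_{i<j}G_{ij}^2=1$.

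The heart of the argument, in the strongly planar case (where irrationality of $E$ forces $\mathcal{L}$ to be a graph over $E$), runs as follows. Over a disk $D_R\subset E$ of area $A$ the tile projections tile $D_R$ bijectively, giving
\[
\sum_{i<j}N_{ij}(R)\,|G_{ij}|=A+o(A),
\]
where $N_{ij}(R)$ counts the $T_{ij}$-tiles lifted above $D_R$. Composing with the orthogonal projection $\pi_{ij}:\mathbb{R}^n\to\mathbb{R}^2$ onto the $(e_i,e_j)$-plane, whose restriction to $E$ has absolute determinant $|G_{ij}|$, I find that the image of $\mathcal{L}\cap\pi_E^{-1}(D_R)$ has area $|G_{ij}|A+o(A)$; under $\pi_{ij}$ every $T_{ij}$-tile becomes a translated unit square while every other tile is crushed to a set of zero $2$-dimensional measure, so the $N_{ij}(R)$ unit squares must cover that image. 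This yields $n_{ij}\geq|G_{ij}|$ for every pair. Combining with $\sum n_{ij}|G_{ij}|=1=\sum G_{ij}^2$ forces equality termwise, so $n_{ij}=|G_{ij}|$ and the frequency formula drops out.

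The main obstacle is the weakly planar case, where $\mathcal{L}$ can fold above $E$ and both the equality $\sum n_{ij}|G_{ij}|=1$ and the multiplicity-one covers of the various $(e_i,e_j)$-planes fail. The natural remedy is to show that the average multiplicity of the fold is a single scalar $m\geq 1$ common to every projection, so the argument above yields $n_{ij}=m|G_{ij}|$ and the ratio $|G_{ij}|/\sum|G_{kl}|$ is preserved; I would make this rigorous by slicing the lift into at most $\lceil t\rceil$ single-sheeted layers — using the uniform Lipschitz estimate of Section~\ref{sec:rhombus_tilings} to control the slicing — and applying the strong-case analysis layer by layer.
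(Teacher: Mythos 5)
The paper gives no proof of this proposition --- it is explicitly left to the reader --- so your argument can only be judged on its own terms. The strongly planar part is essentially sound: normalizing by an orthonormal basis of $E$ so that $\sum_{k<l}G_{kl}^2=1$, deducing $\sum_{k<l}n_{kl}|G_{kl}|=1$ from the fact that the projected tiles tile $E$, and then squeezing $n_{ij}=|G_{ij}|$ termwise from the separate lower bounds $n_{ij}\geq|G_{ij}|$ is a clean device. One step is asserted rather than proved: that the $\pi_{ij}$-image of the piece of lift above $D_R$ has area at least $|G_{ij}|A-o(A)$. Writing the lift as $x\mapsto x+\phi(x)$ with $\phi$ bounded, the composite $x\mapsto\pi_{ij}(x)+\pi_{ij}(\phi(x))$ is a continuous bounded perturbation of the linear isomorphism $\pi_{ij}|_E$, and you need a properness/degree argument to conclude that its image covers all of $\pi_{ij}(D_R)$ except a boundary collar; this is true (and needed for your inequality $n_{ij}\geq |G_{ij}|$), but it should be said. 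Note also that the termwise equality pins down $n_{ij}$ only when $G_{ij}\neq 0$; the degenerate case is excluded by the paper anyway.

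The genuine gap is the weakly planar case, which you leave as a plan rather than a proof, and the plan is doubtful: there is no reason why a ``fold'' should have one average multiplicity $m$ common to all $\binom{n}{2}$ coordinate projections, and slicing a stepped surface into $\lceil t\rceil$ single-sheeted layers produces pieces with uncontrolled boundaries whose tile counts do not obviously add up. Fortunately the obstacle you are guarding against does not occur: for any planar rhombus tiling of slope $E$, of whatever thickness, the orthogonal projection of the lift onto $E$ is already a homeomorphism onto $E$. Indeed, the sign of $G_{ij}$ necessarily agrees with the sign of the angle between $\vec{v}_i$ and $\vec{v}_j$ (this is the remark made right after the proposition), so the projection restricted to the lift is an orientation-preserving local homeomorphism, and it is proper because the lift stays at bounded distance from $E$; a proper local homeomorphism of the plane onto the plane is a homeomorphism. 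This is exactly the mechanism invoked in the proof of Lemma~\ref{lem:planarity_codim2_CS} for the projection parallel to $E'$. With that observation your strong-case computation applies verbatim to all planar tilings, and your final paragraph can simply be deleted.
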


In particular, if $E$ has a zero Grassmann coordinate $G_{ij}$ then the tile $T_{ij}$ does not appears in planar tilings of slope $E$, that is, those are degenerated tilings (we shall avoid this case further).
Note also that the sign of a Grassmann coordinate of a planar tiling depends only on the $\vec{v}_i$'s (a slope with a different sign would yield a tiling which do not project correctly onto a tiling of the plane).
The proof of the above proposition, further not used, is left to the reader.
We will rather rely on the following:

\begin{proposition}\label{prop:subperiod_grassmann}
If a planar rhombus tiling has an $ijk$-subperiod $(p,q,r)$, then
$$
pG_{jk}-qG_{ik}+rG_{ij}=0.
$$
\end{proposition}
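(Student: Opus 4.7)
The plan is to exploit the fact that the $ijk$-shadow of the tiling lies in a bounded neighbourhood of the projected slope $\pi_{ijk}(E) \subset \mathbb{R}^3$, and then observe that the Grassmann coordinates $(G_{jk}, -G_{ik}, G_{ij})$ are precisely a normal vector to this projected plane. The equation $pG_{jk} - qG_{ik} + rG_{ij} = 0$ then simply expresses that any period of the shadow must be parallel to its supporting plane.

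More precisely, first I would pick two generating vectors $u = (u_1, \ldots, u_n)$ and $v = (v_1, \ldots, v_n)$ of the slope $E$. The projection $\pi_{ijk}(E)$ onto the space spanned by $\vec{e}_i, \vec{e}_j, \vec{e}_k$ is generated by $(u_i, u_j, u_k)$ and $(v_i, v_j, v_k)$; a direct cross-product computation shows that a normal vector to this plane is exactly $(u_jv_k - u_kv_j, u_kv_i - u_iv_k, u_iv_j - u_jv_i) = (G_{jk}, -G_{ik}, G_{ij})$. If $\pi_{ijk}(E)$ has dimension less than $2$ (i.e.\ these two $3$-vectors are collinear), then all three of $G_{ij}, G_{ik}, G_{jk}$ vanish and the conclusion is trivial, so I may assume we are in the generic case where $\pi_{ijk}(E)$ is a genuine plane.

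Next, since the tiling is planar with slope $E$ and thickness $t$, its lift is contained in the tube $E + [0,t]^n$. Projecting orthogonally onto the $ijk$-coordinate subspace, the $ijk$-shadow lies in $\pi_{ijk}(E) + \pi_{ijk}([0,t]^n)$, and the second summand is a bounded subset of $\mathbb{R}^3$. Thus the shadow is contained in a slab of bounded thickness around the plane $\pi_{ijk}(E)$.

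Finally, let $\vec{p} = (p,q,r)$ be an $ijk$-subperiod, so translation by $\vec{p}$ maps the shadow to itself, hence translation by every integer multiple $m\vec{p}$ does too. If $\vec{p}$ had a nonzero component $\vec{p}^\perp$ orthogonal to $\pi_{ijk}(E)$, then iterating the period would shift the shadow by $m\|\vec{p}^\perp\|$ in the normal direction for arbitrarily large $|m|$, contradicting the bounded-slab confinement. Therefore $\vec{p}$ is parallel to $\pi_{ijk}(E)$, i.e.\ orthogonal to the normal vector $(G_{jk}, -G_{ik}, G_{ij})$, which gives exactly $pG_{jk} - qG_{ik} + rG_{ij} = 0$. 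The only subtle point is the handling of the degenerate case, which as noted collapses to a tautology; the main quantitative ingredient is the boundedness of $\pi_{ijk}([0,t]^n)$, which is immediate.
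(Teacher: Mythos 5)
Your proof is correct and follows essentially the same route as the paper's: identify $(G_{jk},-G_{ik},G_{ij})$ as a normal vector to the projected slope $\pi_{ijk}(E)$ and conclude that the subperiod, being a period of the shadow, must be orthogonal to it. The only difference is that you spell out the bounded-slab argument for why a period of the shadow must be parallel to $\pi_{ijk}(E)$ (and dispose of the degenerate case), steps the paper leaves implicit.
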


\begin{proof}
Consider the $ijk$-shadow of a planar rhombus tiling.
It is a planar rhombus tiling in $\mathbb{R}^3$ whose slope is generated by $(u_i,u_j,u_k)$ and $(v_i,v_j,v_k)$, hence has normal vector $(G_{jk},-G_{ik},G_{ij})$.
This vector thus has zero dot product with any vector in the slope, in particular with $(p,q,r)$: this yields the claimed relation.
\end{proof}

To each subperiod thus corresponds a linear relation with integer coefficients on Grassmann coordinates.
Together with the Plücker relations, this yields a system of polynomial equations.
If this system has a unique solution, then subperiods -- hence local rules by Prop.~\ref{prop:subperiod_rules} -- can enforce planar rhombus tiling to have this solution as slope.
Actually, this remains true if there are finitely many solutions, {\em i.e.}, if the system of polynomial equations is zero-dimensional, because one can always increase the diameter of local rules to select one among finitely many slopes.
One can then use very efficient algorithms (usually relying on Gröbner bases) to determine whether this system is zero-dimensional.
However, in order to conclude that such a slope has local rules, it must be proven that local rules can also enforce the {\em planarity} itself: this becomes the key issue.

\section{Codimension two}
\label{sec:codim2}

\subsection{Statement of the main result}
\label{sec:statement_codim2}

Subperiods are said to {\em enforce planarity} when the rhombus tilings with all these subperiods are planar with a uniformly bounded thickness.
Let us stress that there is no restriction on the number of slopes: it can be infinite.
The planarity is said to be {\em irrational} if there is at least one planar rhombus tilings with an irrational non-degenerated slope which has these subperiods (otherwise, we refer to Prop.~\ref{prop:ratio2} and \ref{prop:ratio1}).\\

We here focus on codimension two rhombus tilings, as the most simple non-trivial case.
Codimension one tilings are indeed trivial: any subperiod is a period, whence the only slopes that can be enforced by subperiods are rational ones (according to Prop.~\ref{prop:ratio2}).
Higher codimension tilings shall be considered in the next section.
The main result we get is the following:

\begin{theorem}\label{th:planarity_codim2}
The subperiods of a codimension two rhombus tiling enforce irrational planarity if and only if three of them, each in a shadow with only one period, can be lifted in an irrational non-degenerated plane onto pairwise non-collinear vectors.
This holds when subperiods characterize finitely many slopes.
\end{theorem}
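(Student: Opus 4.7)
My plan is to split the equivalence into its two implications and then address the final clause separately. Throughout, I would systematically use Prop~\ref{prop:subperiod_grassmann}, which turns every $ijk$-subperiod $(p,q,r)$ into the linear relation $pG_{jk}-qG_{ik}+rG_{ij}=0$ on the Grassmann coordinates of any candidate slope; these complement the single Plücker relation $G_{12}G_{34}-G_{13}G_{24}+G_{14}G_{23}=0$ available in codimension two.

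For necessity, assume the subperiods enforce irrational planarity with reference slope $E\subset\mathbb{R}^4$ irrational and non-degenerated. Each of the four shadows of $E$ is a plane in $\mathbb{R}^3$ containing $0$, $1$, or $2$ independent rational directions. All four being rational would force rational ratios among all Grassmann coordinates, contradicting irrationality of $E$; on the other hand, too many totally-irrational shadows would leave too few subperiods to enforce planarity. A short case analysis then shows at least three shadows must have exactly one period. Their three associated lifts live (uniquely, since $E$ is non-degenerated) in $E$, and I would argue they can be chosen pairwise non-collinear: two collinear lifts would make the same $E$-direction visible in two different shadows, giving an extra Grassmann relation that, together with Plücker, would either force a zero Grassmann coordinate (contradicting non-degeneracy) or introduce a rational direction in $E$ (contradicting irrationality).

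For sufficiency, assume three subperiods $p_a$ in shadows $S_a$ (with excluded index $l_a$, for $a=1,2,3$) each with only one period, lifting to pairwise non-collinear vectors $\tilde{p}_a\in E$. For any rhombus tiling $\mathcal{T}'$ with these subperiods, I would derive pseudo-periods $v_a\in\mathbb{R}^2$ such that the Lipschitz lift $f:\mathbb{R}^2\to\mathbb{R}^4$ satisfies $f(x+v_a)-f(x)=\tilde{p}_a+h_a(x)\vec{e}_{l_a}$ for some $h_a:\mathbb{R}^2\to\mathbb{R}$. Computing $f(x+v_a+v_b)$ two ways and using $l_a\neq l_b$ yields that $h_a$ is $v_b$-periodic for every $b\neq a$, hence periodic on the full-rank lattice $\mathbb{Z}v_b+\mathbb{Z}v_c$ (the $v_a$ are pairwise linearly independent in $\mathbb{R}^2$ because the $\tilde{p}_a$ are non-collinear in $E$); local finiteness of $\mathcal{T}'$ then forces $h_a$ to take finitely many values and thus be uniformly bounded. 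Combined with the three linear relations and Plücker, this confines $\mathcal{T}'$ to a uniform tube around some slope $E'$ chosen from the finite solution set of the associated polynomial system.

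The ``finitely many slopes'' clause follows by observing that this condition is exactly the zero-dimensionality of the full subperiods/Plücker polynomial system, which in particular forces enough linear independence among subperiod relations that three subperiods with pairwise non-collinear lifts in the reference $E$ can be extracted---precisely the second condition of the theorem. The main obstacle I expect is in the sufficiency step: rigorously producing pseudo-periods $v_a$ from mere shadow periodicity when the shadow itself need not be planar, and controlling the averages of the bounded $h_a$ on their periodicity lattices so that the candidate slope $E'$ is really a $2$-plane in $\mathbb{R}^4$ rather than a $3$-dimensional object spanned by drifting effective translations $\tilde{p}_a+\bar{h_a}\vec{e}_{l_a}$.
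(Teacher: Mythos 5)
Your high-level skeleton (two implications plus the finiteness clause, with Prop.~\ref{prop:subperiod_grassmann} turning each subperiod into a linear relation to be combined with the Pl\"ucker relation) agrees with the paper's, but the decisive steps are missing in both directions. On the sufficiency side, your starting ansatz is already problematic: a period $\vec{p}_a$ of the $ijk$-shadow does not correspond to a \emph{fixed} translation $v_a$ of the base plane $\mathbb{R}^2$, because passing to the shadow collapses every edge in the deleted direction $\vec{e}_{l_a}$, so the base displacement realizing the shadow period is itself position-dependent. The paper avoids this by first producing a second irrational solution $E'$ of the subperiod-plus-Pl\"ucker system with $E\cap E'=\{0\}$ (this is where the non-collinearity hypothesis is used, via the Grassmann coordinates of the planes $\mathbb{R}\vec{q}_i+\mathbb{R}\vec{e}_i$), then parametrizing the lift over $E$ as $\vec{x}\mapsto\vec{x}+z_1(\vec{x})\vec{r}_1+z_2(\vec{x})\vec{r}_2$ and reading the three shadow periodicities as bounded fluctuations of $z_2$, $z_1$ and $z_2-\alpha z_1$ along $\vec{q}_1$, $\vec{q}_2$ and $\vec{q}_3=\vec{q}_1+\alpha\vec{q}_2$. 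The obstacle you name at the end---controlling the drift of the effective translations---is not a technical afterthought but the heart of the proof: the three bounded-fluctuation conditions combine into the approximate Cauchy equation $h(\lambda)+h(\mu)\equiv h(\lambda+\mu)$, whose solutions are linear up to uniformly bounded error, and only this yields a $2$-plane rather than the three-dimensional drifting object you describe. Without this step (or a substitute for it) the sufficiency argument is incomplete at its crux; note also that two bounded-fluctuation directions would not suffice, which is why the third, non-collinear subperiod is essential.

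On the necessity side and for the finiteness clause, the missing idea is a \emph{construction}. ``Too few subperiods cannot enforce planarity'' is exactly the statement to be proven, not a premise for a case analysis: the paper exhibits, whenever the lifted subperiods span at most two lines of $E$ or at most two shadows have a single period, explicit non-planar surfaces $\mathcal{S}_{f,g}=\{\lambda\vec{q}_1+\mu\vec{q}_2+f(\lambda)\vec{r}_1+g(\mu)\vec{r}_2\}$ (e.g.\ $f(x)=g(x)=x^3$) whose approximating tilings retain all the prescribed subperiods. Your alternative route---arguing that two collinear lifts would already contradict irrationality or non-degeneracy of $E$---aims at the wrong target: it would make that half of the hypothesis vacuous, whereas the content of the theorem is that when the lifts are too collinear (or too few shadows have a unique period) enforcement genuinely fails. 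Similarly, the last sentence of the theorem is obtained in the paper by running the same construction with $f$ and $g$ linear, which produces infinitely many slopes with the given subperiods whenever planarity is not enforced (Lemma~\ref{lem:planarity_codim2_CP}); zero-dimensionality of the polynomial system does not by itself ``extract'' three pairwise non-collinear lifts without this detour through the counterexample family.
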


\noindent With Prop.~\ref{prop:ratio2} and \ref{prop:subperiod_rules}, this easily yields

\begin{corollary}\label{cor:local_rules_codim2}
If a codimension two planar rhombus tiling has subperiods which characterize finitely many slopes, then it admits local rules.
\end{corollary}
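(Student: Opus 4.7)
The plan is to assemble three ingredients already in the paper---the enforcement of planarity in a finite list of slopes via Theorem~\ref{th:planarity_codim2}, the conversion of subperiods into local rules via Prop.~\ref{prop:subperiod_rules}, and the treatment of rational slopes via Prop.~\ref{prop:ratio2}---and then refine the atlas used as local rules so as to pin down the slope of the given tiling $\mathcal{T}$ among the finitely many candidates.

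If the slope $E$ of $\mathcal{T}$ contains two independent rational directions, Prop.~\ref{prop:ratio2} provides strong local rules directly. The intermediate case in which $E$ has exactly one rational direction $\vec{d}$ must be ruled out at the outset, since Prop.~\ref{prop:ratio1} forbids local rules for such slopes; my argument would be that by Prop.~\ref{prop:subperiod_grassmann} the linear relations on Grassmann coordinates attached to the $\vec{d}$-derived subperiods are satisfied by every plane through $\vec{d}$, and generically the shadows of $E$ have only that one rational direction, so the slopes sharing those subperiods form at least a two-parameter family, incompatible with the finiteness hypothesis. Assume therefore that $E$ is totally irrational. Theorem~\ref{th:planarity_codim2} then supplies a finite list $\{E=E_1,E_2,\ldots,E_m\}$ of compatible slopes together with a thickness $t$ such that every rhombus tiling carrying all the subperiods of $E$ is planar with slope in that list and thickness at most $t$. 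Prop.~\ref{prop:subperiod_rules} then yields an $R_0$ and a corresponding forbidden-pattern set---the complement of the $R_0$-atlas of $\mathcal{T}$---that enforces those subperiods on any compliant tiling.

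The remaining and, in my view, most delicate step is eliminating the competing slopes $E_2,\ldots,E_m$. For each $i\geq 2$, Prop.~\ref{prop:frequences} provides an index pair $(j,k)$ for which the prototile frequency $|G_{jk}|/\sum_{p<q}|G_{pq}|$ differs between $E$ and $E_i$. The plan is to upgrade this pointwise distinction into a uniform one: leveraging the Lipschitz control on lifts recalled in Section~\ref{sec:rhombus_tilings} together with the tube-width bound $t$, the proportion of each prototile inside an $R$-ball of any planar tiling of slope $E_i$ and thickness at most $t$ should match the $E_i$-frequency up to an $O(1/R)$ boundary correction, uniformly in the tiling and in the centre of the ball. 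Choosing $R\geq R_0$ large enough to separate the frequency profile of $E$ from that of every other $E_i$ beyond this error, no $R$-map of a planar tiling of slope $E_i\neq E$ and thickness $\leq t$ can appear in the $R$-atlas of $\mathcal{T}$. The $R$-atlas of $\mathcal{T}$ then qualifies as local rules in the sense of Definition~\ref{def:local_rules}, completing the proof.
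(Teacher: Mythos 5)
Your skeleton coincides with the paper's: the authors obtain this corollary in one line from Theorem~\ref{th:planarity_codim2} (finitely many compatible slopes implies that the subperiods force planarity with uniformly bounded thickness $t$), Prop.~\ref{prop:subperiod_rules} (the subperiods themselves are enforceable by local rules of some diameter $R_0$), and the remark of Section~\ref{sec:grassmanniennes} that ``one can always increase the diameter of local rules to select one among finitely many slopes''. What you add is a concrete mechanism for that last selection step, via tile frequencies, and this is where your write-up rests on two facts that you assert rather than prove. First, you need a \emph{uniform, quantitative} version of Prop.~\ref{prop:frequences} valid for \emph{weakly} planar tilings of thickness at most $t$; note the paper explicitly declares that proposition ``further not used'' and leaves even the strong case to the reader. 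The claim is true: the number of tiles $T_{jk}$ lifted over a region equals the signed area of the projection of the lifted region onto the $\vec{e}_j\vec{e}_k$-coordinate plane, which by Stokes is controlled by the lifted boundary curve, itself confined to the tube $E_i+[0,t]^n$; this gives the $O(R)$ boundary correction you invoke. Second, you must check that two \emph{distinct} candidate slopes really have distinct frequency profiles: this holds because distinct planes have non-proportional Grassmann vectors, and by the sign remark following Prop.~\ref{prop:frequences} every candidate slope that can be realized by a tiling with the given prototiles carries the same sign pattern as $E$, so the vectors of absolute values are non-proportional as well. A cheaper selection argument bypasses frequencies entirely: pick $\vec{w}\in E_i\setminus E$; a patch of an $E_i$-planar tiling of thickness at most $t$ stretching a distance $L$ in the direction $\vec{w}$ lifts to within $O(t)$ of a translate of $E_i$, hence at distance at least $cL-O(t)$ from every translate of $E$ for some $c>0$ depending only on $E$ and $E_i$, so for $L$ large such a patch cannot occur in $\mathcal{T}$.

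The genuinely soft spot is your dismissal of slopes with exactly one rational direction: ``generically the shadows of $E$ have only that one rational direction'' is not a proof, and the case cannot be waved away, since Prop.~\ref{prop:ratio1} forbids local rules for such slopes, so the corollary as stated obliges you to show that the finiteness hypothesis fails for them. The correct starting point is the one you identify --- every plane containing the rational line $\mathbb{R}\vec{d}$ satisfies all four subperiod relations induced by $\vec{d}$, and these planes form a two-parameter family --- but you must then rule out the non-generic situation where some shadow carries a second independent period, which adds further constraints and could a priori cut the solution set down to finitely many planes. The paper itself sidesteps this entirely by restricting attention to irrational slopes from Section~\ref{sec:local_rules} onwards; if you keep the case in your proof, it needs the actual shadow-by-shadow argument rather than an appeal to genericity.
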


This sufficient condition can be algorithmically checked on a given slope $E\subset\mathbb{R}^4$: it suffices to find its subperiods and to check that the associated equations, togerther with the Plücker relations, yield a zero-dimensional system.
One can even bound the diameter of the local rules by the length of the largest lift in $E$ of the subperiods.
Sharp bounds on this thickness however remain to be found.
In particular, when is it equal to one?
The proof of this theorem is postponed to the section \ref{sec:proof} and we shall first illustrate it on some examples.

\subsection{First example: Ammann-Beenker tilings}
\label{sec:ammann_beenker}

Independently introduced by Ammann in the 1970s and Beenker in 1982 (\cite{GS,beenker}), the Ammann-Beenker tilings are the strongly planar rhombus tilings of codimension two whose slope is generated by the two vectors $(\cos(k\pi/4))_k$ and $(\sin(k\pi/4))_k$, $k=0,\ldots,3$.
The Grassmann coordinates of this slope are $(1,\sqrt{2},1,1,\sqrt{2},1)$.
There are four subperiods:
\begin{itemize}
\item the $123$-subperiod $\vec{p}_4:=\vec{e}_1-\vec{e}_3$ which corresponds to $G_{12}=G_{23}$;
\item the $124$-subperiod $\vec{p}_3:=\vec{e}_2+\vec{e}_4$ which corresponds to $G_{12}=G_{14}$;
\item the $134$-subperiod $\vec{p}_2:=\vec{e}_1+\vec{e}_3$ which corresponds to $G_{14}=G_{34}$;
\item the $234$-subperiod $\vec{p}_1:=\vec{e}_2-\vec{e}_4$ which corresponds to $G_{23}=G_{34}$.
\end{itemize}
Plugging this into the only one Plücker relation $G_{12}G_{34}=G_{13}G_{24}-G_{14}G_{23}$ with the normalization $G_{12}=1$ yields $G_{13}G_{24}=2$.
The system has thus dimension one and characterizes the family of planes
$$
E_0:=(0,0,0,0,1,0),\qquad
E_{t> 0}:=(1,t,1,1,2/t,1),\qquad
E_{\infty}:=(0,1,0,0,0,0).
$$
In particular, the slope of the Ammann-Beenker tilings is obtained for $t=\sqrt{2}$.
The subperiods lift in $E_{\sqrt{2}}$ onto the pairwise non-collinear vectors
$$
\vec{q}_1=\vec{p}_1+\sqrt{2}\vec{e}_1,
\qquad
\vec{q}_2=\vec{p}_2+\sqrt{2}\vec{e}_2,
\qquad
\vec{q}_3=\vec{p}_3+\sqrt{2}\vec{e}_3,
\qquad
\vec{q}_4=\vec{p}_4-\sqrt{2}\vec{e}_4.
$$
Theorem~\ref{th:planarity_codim2} ensures that the rhombus tilings with these subperiods are planar.\\

\begin{figure}[hbtp]
\includegraphics[width=0.31\textwidth]{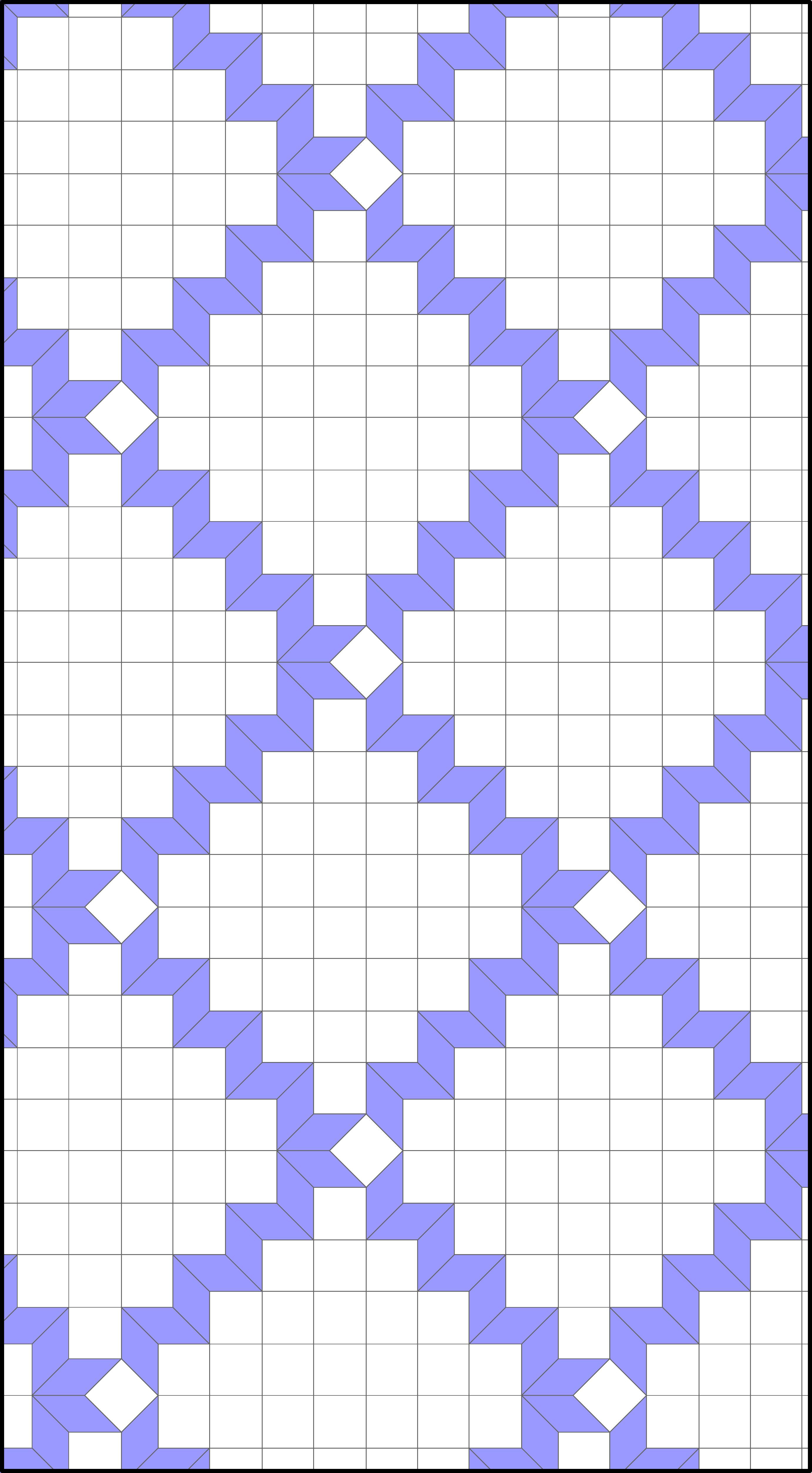}
\hfill
\includegraphics[width=0.31\textwidth]{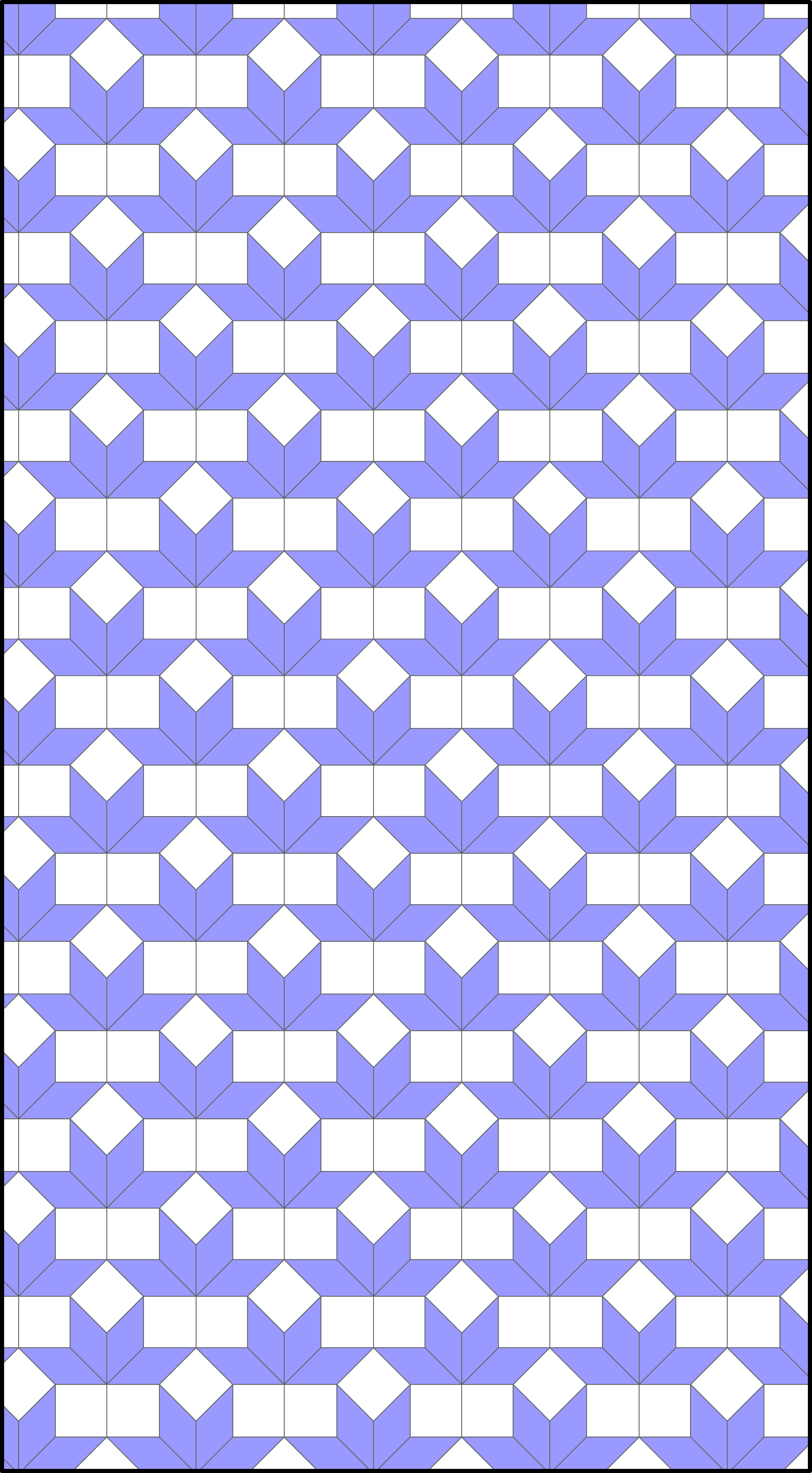}
\hfill
\includegraphics[width=0.31\textwidth]{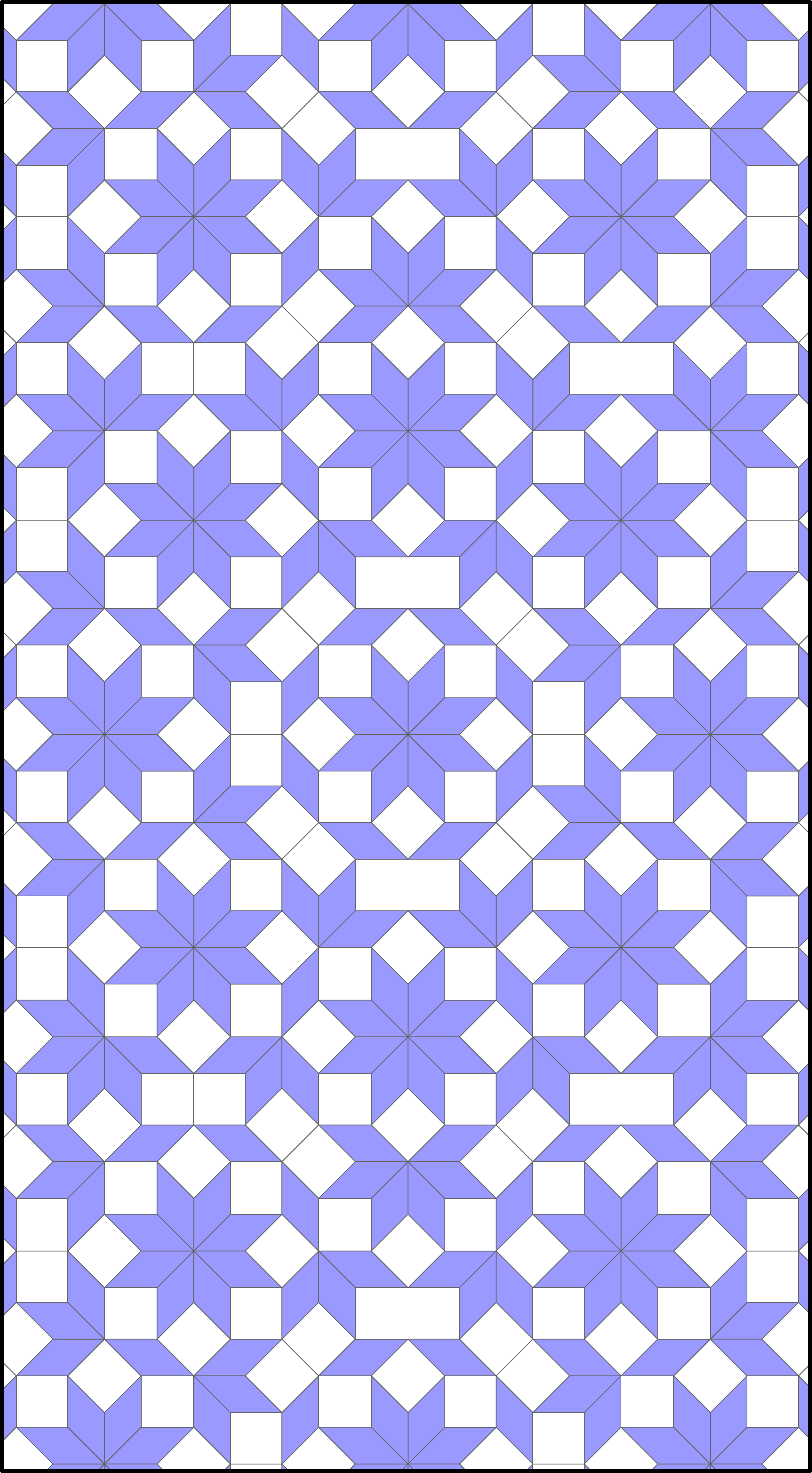}
\caption{
Strongly planar tilings with slope $E_{\frac{1}{4}}$, $E_1$ and $E_{\sqrt{2}}$ (from left to right).
They all have the same subperiods.
The rightmost is an Ammann-Beenker tiling and has the smallest proportion of square tiles.
}
\label{fig:ammann}
\end{figure}

Note that local rules can only enforce such subperiods for $t$ ranging in a closed interval (because the subperiods become arbitrarily large for large $t$), but one can color the local rules to enforce the whole family, see \cite{BF,katz}.
Note also that it would suffice to enforce $G_{13}=G_{24}$ (that is, according to Prop.~\ref{prop:frequences}, to enforce the square tiles $T_{13}$ and $T_{24}$ to appear with the same frequency) in order to characterize the slope of the Ammann-Beenker tilings.
This however cannot be done by local rules, as first pointed out by Burkov in \cite{burkov}: we need to use colored local rules, as first done by Ammann \cite{GS,A5}.
As an alternative, we can also obtain Ammann-Beenker tilings as the solution of an optimization problem.
Indeed, according to Prop.~\ref{prop:frequences}, the quantity $G_{13}+G_{24}=t+2/t$ is proportional to the frequency of the square tiles in $E_t$ and is minimal for $t=\sqrt{2}$.

\subsection{Second example: a golden octagonal tiling}
\label{sec:golden_octagonal}

Let us now consider an example where subperiods characterize finitely many slopes.
Let $\varphi=\frac{1+\sqrt{5}}{2}$ be the golden ratio and $E$ the plane generated by
$$
(-1,0,\varphi,\varphi)
\qquad\textrm{and}\qquad
(0,1,\varphi,1).
$$
Its Grassmann coordinates are $E=(1,\varphi,1,\varphi,\varphi,1)$.
There are four subperiods:
\begin{itemize}
\item the $123$-subperiod $\vec{p}_4:=\vec{e}_1+\vec{e}_2$ which corresponds to $G_{13}=G_{23}$;
\item the $124$-subperiod $\vec{p}_3:=\vec{e}_2+\vec{e}_4$ which corresponds to $G_{12}=G_{14}$;
\item the $134$-subperiod $\vec{p}_2:=\vec{e}_1+\vec{e}_3$ which corresponds to $G_{14}=G_{34}$;
\item the $234$-subperiod $\vec{p}_1:=\vec{e}_3+\vec{e}_4$ which corresponds to $G_{23}=G_{24}$.
\end{itemize}
Plugging this into the Plücker relation $G_{12}G_{34}=G_{13}G_{24}-G_{14}G_{23}$ with the normalization $G_{12}=1$ yields $1=x^2-x$, where $x=G_{13}=G_{23}=G_{24}$.
Subperiods thus characterize $E$ and its algebraic conjugate\footnote{Only one really yields a tiling because their Grassmann coordinates have different sign.}, and Corollary~\ref{cor:local_rules_codim2} ensures that there are local rules.
One can check that the subperiods indeed lift in $E$ onto the pairwise non-collinear vectors
$$
\vec{q}_1=\vec{p}_1+(1-\varphi)\vec{e}_1,
\qquad
\vec{q}_2=\vec{p}_2+\varphi\vec{e}_2,
\qquad
\vec{q}_3=\vec{p}_3+\varphi\vec{e}_3,
\qquad
\vec{q}_4=\vec{p}_4+(1-\varphi)\vec{e}_4.
$$
These lifts have length at most $\sqrt{\varphi+3}$: this bounds the diameter of local rules.

\begin{figure}[hbtp]
\includegraphics[width=\textwidth]{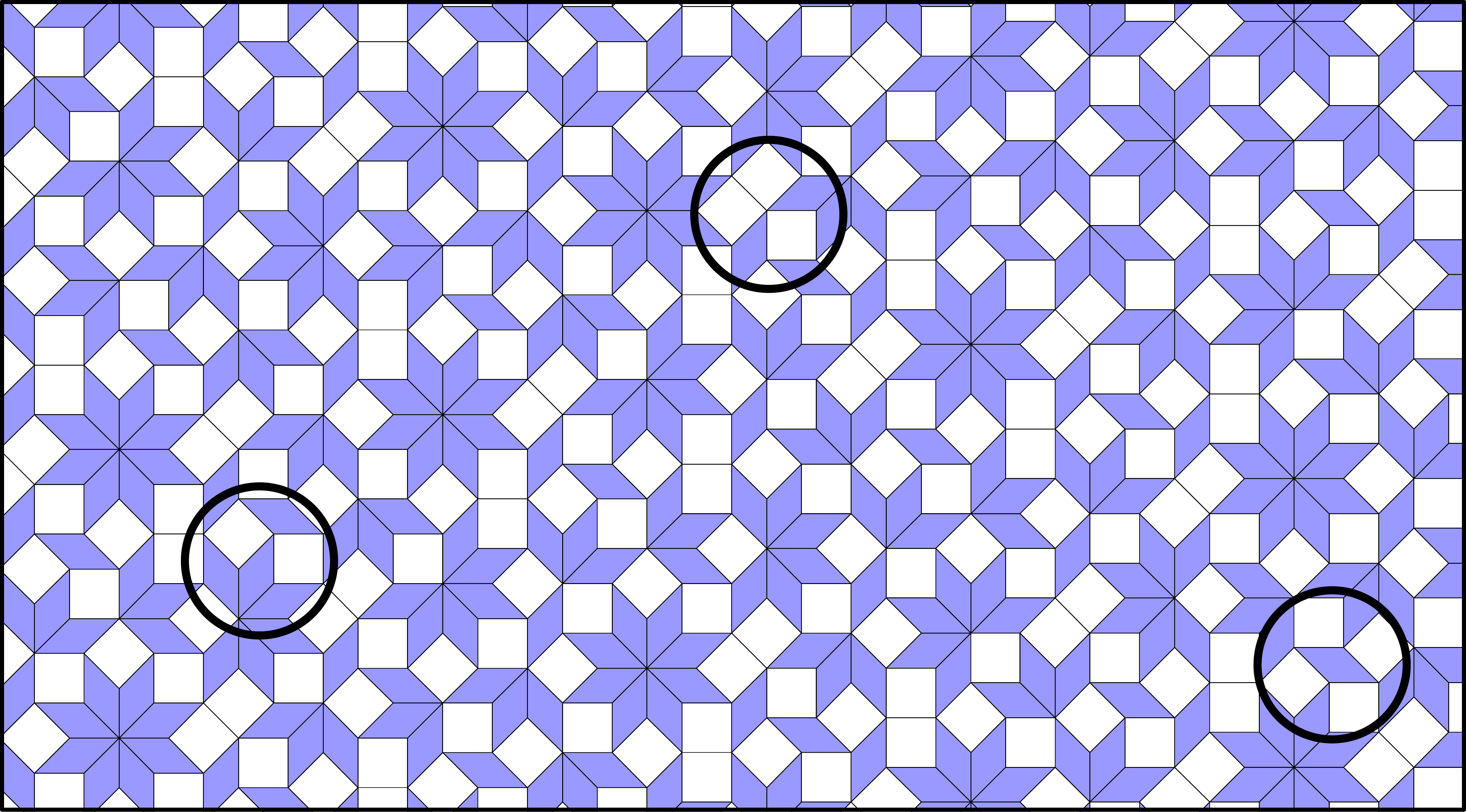}
\caption{
A strongly planar tiling with slope $(1,\varphi,1,\varphi,\varphi,1)$.
Since its sub\-pe\-riods characterize only finitely many slopes, it admits local rules of diameter less than the one of the depicted circles.}
\label{fig:prefere}
\end{figure}

\begin{figure}[hbtp]
\centering
\includegraphics[width=0.8\textwidth]{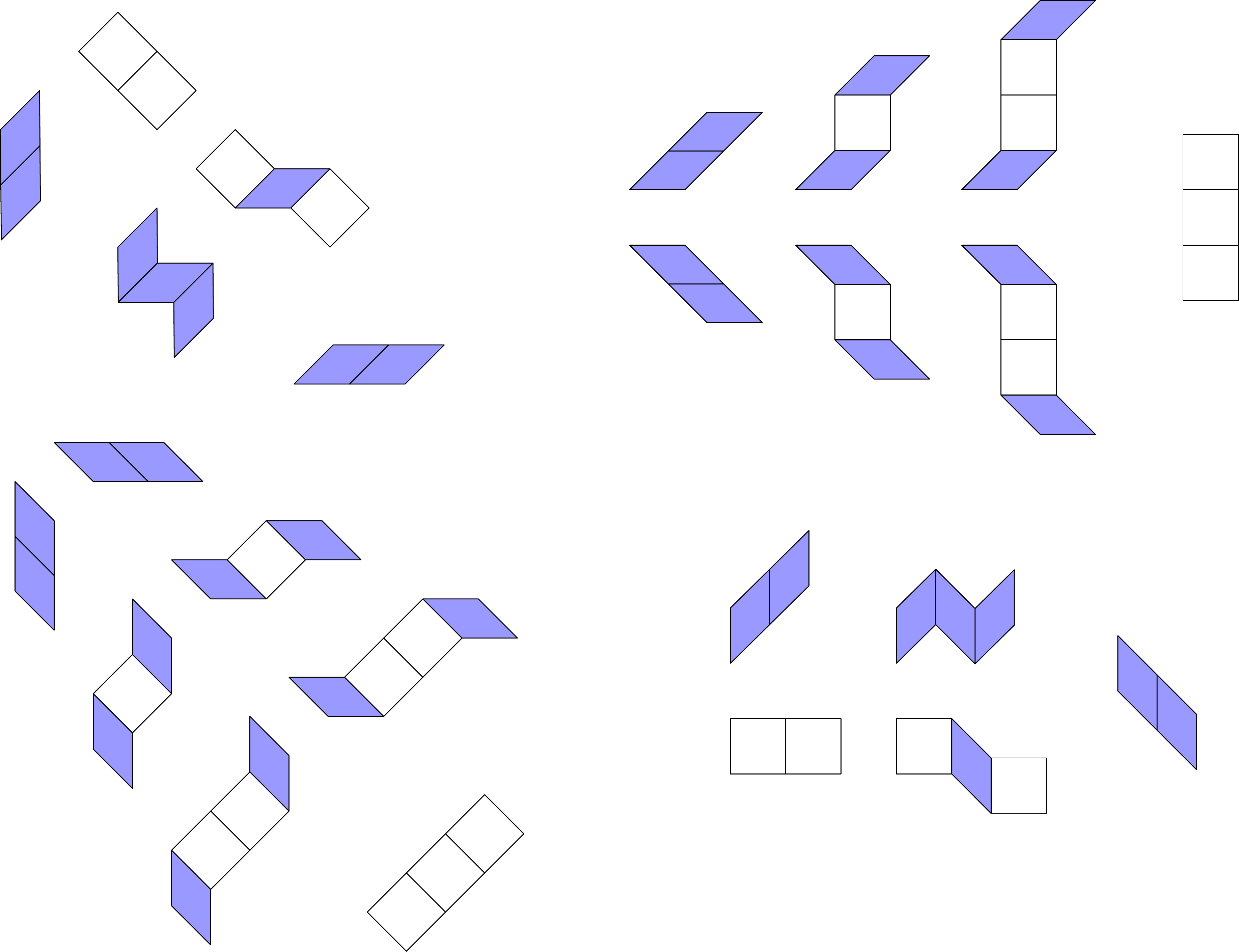}
\caption{
Any codimension $2$ tiling which avoids these $24$ forbidden patterns has the same subperiods as a strongly planar tiling with slope $(1,\varphi,1,\varphi,\varphi,1)$ (Fig.~\ref{fig:prefere}), hence is planar with the same slope.
Conversely, any strongly planar tiling with this slope avoids these $24$ forbidden patterns.}
\label{fig:prefere2}
\end{figure}

\subsection{Proof of the main result}
\label{sec:proof}

The proof of Theorem~\ref{th:planarity_codim2} is organized in three lemmas.
The first lemma gives a condition on subperiods to ensure planarity:

\begin{lemma}\label{lem:planarity_codim2_CS}
If a rhombus tiling of codimension two has three subperiods, each in a shadow with only one period, which can be lifted in an irrational non-degenerated plane $E$ onto pairwise non-collinear vectors, then it is planar.
\end{lemma}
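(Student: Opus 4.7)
The strategy is to show that the lift $\Sigma \subset \mathbb{R}^4$ of the tiling lies in a bounded tube around $E$, which is by definition weak planarity with slope $E$. Denote the three distinguished shadows by $\pi_1, \pi_2, \pi_3$, with subperiods $\vec{p}_1, \vec{p}_2, \vec{p}_3$ and respective lifts $\vec{q}_1, \vec{q}_2, \vec{q}_3 \in E$, and write $l_i$ for the coordinate projected out by $\pi_i$.

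The first and most delicate step is to show that each shadow $\pi_i(\Sigma) \subset \mathbb{R}^3$ is itself weakly planar, with slope $F_i = \pi_i(E)$; note that $F_i$ is a $2$-plane in $\mathbb{R}^3$ containing $\vec{p}_i$, because $\vec{q}_i \in E$ projects to $\vec{p}_i$. This is where I expect the main obstacle to lie: a codimension-one rhombus tiling of $\mathbb{R}^3$ with only a single period need not be weakly planar in general, since one can produce non-planar singly-periodic counterexamples by performing periodic flips on a strongly planar tiling. The argument must therefore exploit both that $\pi_i(\Sigma)$ arises as the shadow of a codimension-two tiling and that all three subperiods admit lifts in a common plane $E$. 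A plausible line of attack is to bound uniformly the $\vec{e}_{l_i}$-fluctuation of $\Sigma$ along a $\vec{p}_i$-periodicity strip of the shadow, by combining the Lipschitz continuity of $\Sigma$ with the rigidity contributed by the other two subperiods.

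Granted this, the weak planarity of $\pi_i(\Sigma)$ yields a linear bound $|\tilde{\vec{n}}_i \cdot \tilde{v} - c_i| \le C$ valid for every vertex $\tilde{v} \in \Sigma$, where $\tilde{\vec{n}}_i \in \mathbb{R}^4$ denotes the $\mathbb{R}^3$-normal of $F_i$ extended by $0$ in the $l_i$-th coordinate; since $F_i = \pi_i(E)$, each $\tilde{\vec{n}}_i$ lies in $E^\perp$.

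It remains to check that two of the three normals $\tilde{\vec{n}}_i$ are linearly independent, so that they span the two-dimensional space $E^\perp$. Suppose $\tilde{\vec{n}}_i \propto \tilde{\vec{n}}_j$ with $i \ne j$: comparing coordinates shows that both vectors vanish at positions $l_i$ and $l_j$, so the $\mathbb{R}^3$-normal of $F_i$ is orthogonal to $\vec{e}_{l_j}$, whence $F_i$ contains the rational direction $\vec{e}_{l_j}$ in addition to $\vec{p}_i$. Generically (i.e.\ unless $\vec{p}_i$ is proportional to $\vec{e}_{l_j}$, a degenerate situation handled separately), this makes $F_i$ a rational $2$-plane and endows $\pi_i(\Sigma)$ with a second independent period, contradicting the ``only one period'' hypothesis. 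The two ensuing linearly independent bounds uniformly control $\pi_{E^\perp}(\tilde{v})$, placing $\Sigma$ in a bounded tube around $E$.
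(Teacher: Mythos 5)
There is a genuine gap, and it sits exactly where you suspect it does: your first step, the weak planarity of each shadow $\pi_i(\Sigma)$, is not proved but only flagged as ``the main obstacle'' with a ``plausible line of attack.'' That step is essentially the entire content of the lemma, and it cannot be obtained shadow-by-shadow: a $\vec{p}_i$-periodic codimension-one surface in $\mathbb{R}^3$ need not be weakly planar, and the paper's own proof of the converse lemma (Lemma~\ref{lem:planarity_codim2_CN}, Case~1) exhibits surfaces $\mathcal{S}_{f,g}$ with $f(x)=x^3$ whose shadows are periodic yet wildly non-planar. Tellingly, your argument never really uses the hypothesis that the three lifts $\vec{q}_1,\vec{q}_2,\vec{q}_3$ are \emph{pairwise non-collinear} (only the ``single period per shadow'' condition enters, in the linear-independence-of-normals step); since that hypothesis is exactly what separates the true statement from the false one, any proof that does not exploit it in the hard step cannot be complete.

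The paper's proof is organized quite differently precisely to avoid claiming planarity of individual shadows. It first produces a second irrational plane $E'$ with the same subperiods and $E\cap E'=\{0\}$ (via algebraic conjugation or a dimension count, using the Plücker relation and the Grassmann coordinates of the planes $F_i$ spanned by $\vec{q}_i$ and $\vec{e}_i$), then writes the lift as a graph $\vec{x}\mapsto\vec{x}+z_1(\vec{x})\vec{r}_1+z_2(\vec{x})\vec{r}_2$ over $E$ with $\vec{r}_i$ the lifts of the subperiods in $E'$. The $\vec{p}_i$-periodicity of each shadow is used only to show that its intersection with $\pi_i(\vec{x}+E')$ is a periodic \emph{curve}, which bounds the fluctuation of one linear combination of $z_1,z_2$ along the single direction $\vec{q}_i$. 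The three non-collinear directions then combine into an approximate Cauchy functional equation $h(\lambda)+h(\mu)\equiv h(\lambda+\mu)$, whose solutions are linear up to bounded error; this is where planarity actually comes from. If you want to salvage your outline, you would need to supply this (or an equivalent) mechanism for converting three one-directional periodicity constraints into two independent global bounds; the Lipschitz bound alone will not do it.
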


\begin{proof}
Let $\mathcal{T}$ be a codim. two tiling satisfying the condition of the Lemma.
Let $\vec{p}_1$, $\vec{p}_2$ and $\vec{p}_3$ denote the subperiods, each in a shadow with only one subperiod.
For $i\in\{1,2,3\}$, let $\vec{q}_i$ denotes the lift of $\vec{p_i}$ in $E$.\\

\noindent {\bf Space parametrization.}\\
The polynomial system defined by the Plücker relation and the linear relations associated with the subperiods of $E$ has at least two irrational solutions.
Indeed, if there are only finitely many solutions, then they are algebraic and each irrational one ({\em e.g}, $E$) yields by algebraic conjugation a different irrational solution.
Otherwise, that is, if there are infinitely many solutions, then they form a continuous curve in the set of planes of $\mathbb{R}^4$.
Since the set of planes which contain a rational line has measure zero (it is a countable union of dimension three subspaces of $\mathbb{R}^4$), this curve contains infinitely many irrational planes.
Let thus $E'$ be an irrational solution other than $E$.
We shall prove by contradiction that $E\cap E'=\{0\}$.
For $i\in\{1,2,3\}$, let $F_i$ be the plane generated by $\vec{q}_i$ and $\vec{e}_i$.
This defines three different rational planes.
Assume that $E\cap E'$ contains a line, necessarily irrational.
Hence $\dim(E+E')=3$.
This lines belongs to at most one of the $F_i$'s, say $F_3$, because if two rational planes intersect along a line, then it is a rational line.
And since $F_1$ and $F_2$ intersects $E$ and $E'$ by lines which generate it, one has $F_1+F_2\subset E+E'$.
We shall get the wanted contradiction by proving that $\dim(F_1+F_2)=4$.
With $\vec{p}_1=(a,b,c)$ and $\vec{p}_2=(d,e,f)$, one computes
$$
F_1=(a,b,c,0,0,0)
\qquad\textrm{and}\qquad
F_2=(-d,0,0,e,f,0).
$$
It is known (see, {\em e.g.}, \cite{HoP}, p.~304) that if the intersection of two $2$-planes of $\mathbb{R}^4$ with Grassmann coordinates $(A_{ij})$ and $(B_{ij})$ is not $\{0\}$, then
$$
A_{12}B_{34}-A_{13}B_{24}+A_{23}B_{14}+B_{12}A_{34}-B_{13}A_{24}+B_{23}A_{14}=0.
$$
In our case, $F_1\cap F_2\neq\{0\}$ would yield $bf-ce=0$.
But this is impossible because $E$, generated by $\vec{q}_1$ and $\vec{q}_2$, is non-degenerated and has the Grassmann coordinate $E_{34}=bf-ce$.
Hence $F_1\cap F_2=\{0\}$, that is, $\dim(F_1+F_2)=4$.\\

\noindent {\bf Lift parametrization.}\\
For $i\in\{1,2,3\}$, let $\vec{r}_i$ denotes the lift of $\vec{p_i}$ in $E'$.
Let $\pi$ denotes the projection parallel to $E'$ onto $E$.
Up to a permutation of the vectors of the standard basis of $\mathbb{R}^4$, one can assume that the angle between $\pi(\vec{e}_i)$ and $\pi(\vec{e}_j)$ has the same sign as the angle between $\vec{v}_i$ and $\vec{v}_j$ (the vectors defining the tile $T_{ij}$).
This way, if we let $\mathcal{S}$ be a lift of $\mathcal{T}$, then $\pi$ is a homeomorphism from $\mathcal{S}$ onto $E$.
On $E$, $\pi(\mathcal{S})$ is indeed nothing but the tiling $\mathcal{T}$ (up to a stretching of the edges of its tiles since $\pi(\vec{e}_i)$ and $\vec{v}_i$ can be different -- they are however never parallel because of the irrationality of $E'$).
There are thus two continuous functions $z_1$ and $z_2$ defined on $E$ such that $\mathcal{S}$ is the image of $E$ under
$$
\rho~:~\vec{x}\mapsto\vec{x}+z_1(\vec{x})\vec{r}_1+z_2(\vec{x})\vec{r}_2.
$$
We shall now show that $\rho$ stays at bounded distance from a plane.\\

\noindent {\bf From subperiods to bounded fluctuations.}\\
Let $\pi_i$ denote the projection onto the shadow which contains $\vec{p}_i$.
For any $\vec{x}\in E$, since the projection parallel to $E'$ is a homeomorphism from $\mathcal{S}$ onto $E$, the plane $\pi_i(\vec{x}+E')$ intersects the shadow $\pi_i(\mathcal{S})$ along a curve $\mathcal{C}_i(\vec{x})$ (see Fig.~\ref{fig:subperiod_constraint}).
One has
$$
\mathcal{C}_i(\vec{x})=\{\pi_i(\vec{x})+z_1(\vec{x}+\lambda\vec{q}_i)\pi_i(\vec{r}_1)+z_2(\vec{x}+\lambda\vec{q}_i)\pi_i(\vec{r_2})~|~\lambda\in\mathbb{R}\}.
$$
Since both $\pi_i(\mathcal{S})$ and $\pi_i(\vec{x}+E')$ are $\vec{p}_i$-periodic, so is $\mathcal{C}_i(\vec{x})$.
In particular, it stays at bounded distance from the line $\pi_i(\vec{x})+\mathbb{R}\vec{p}_i$.
Moreover, the bound can be chosen independently of $\vec{x}$ because $\mathcal{S}$ is Lipschitz.
For $i=1$, since $\pi_1(\vec{r}_1)=\vec{p}_1$, this ensures that $\lambda\mapsto z_2(\vec{x}+\lambda\vec{q}_1)$ is uniformly bounded.
In other words, $z_2$ has bounded fluctuations in the direction $\vec{q}_1$.
Similarly, for $i=2$, $\pi_2(\vec{r}_2)=\vec{p}_2$ yields that $z_1$ has bounded fluctuations in the direction $\vec{q}_2$.
For $i=3$, note that, up to a rescaling, one has $\vec{q}_3=\vec{q}_1+\alpha\vec{q}_2$ for some real $\alpha\neq 0$.
This allows to write
$$
\mathcal{C}_3(\vec{x})=\{\pi_3(\vec{x})+z_1(\vec{x}+\lambda\vec{q}_3)\pi_3(\vec{r}_3)+(z_2-\alpha z_1)(\vec{x}+\lambda\vec{q}_3)\pi_3(\vec{r_2})~|~\lambda\in\mathbb{R}\}.
$$
Then, with $\pi_3(\vec{r}_3)=\vec{p}_3$, the $\vec{p}_3$-periodicity of $\mathcal{C}_3(\vec{x})$ yields that $z_2-\alpha z_1$ has bounded fluctuations in the direction $\vec{q}_3$.\\

\begin{figure}[hbtp]
\centering
\includegraphics[width=0.9\textwidth]{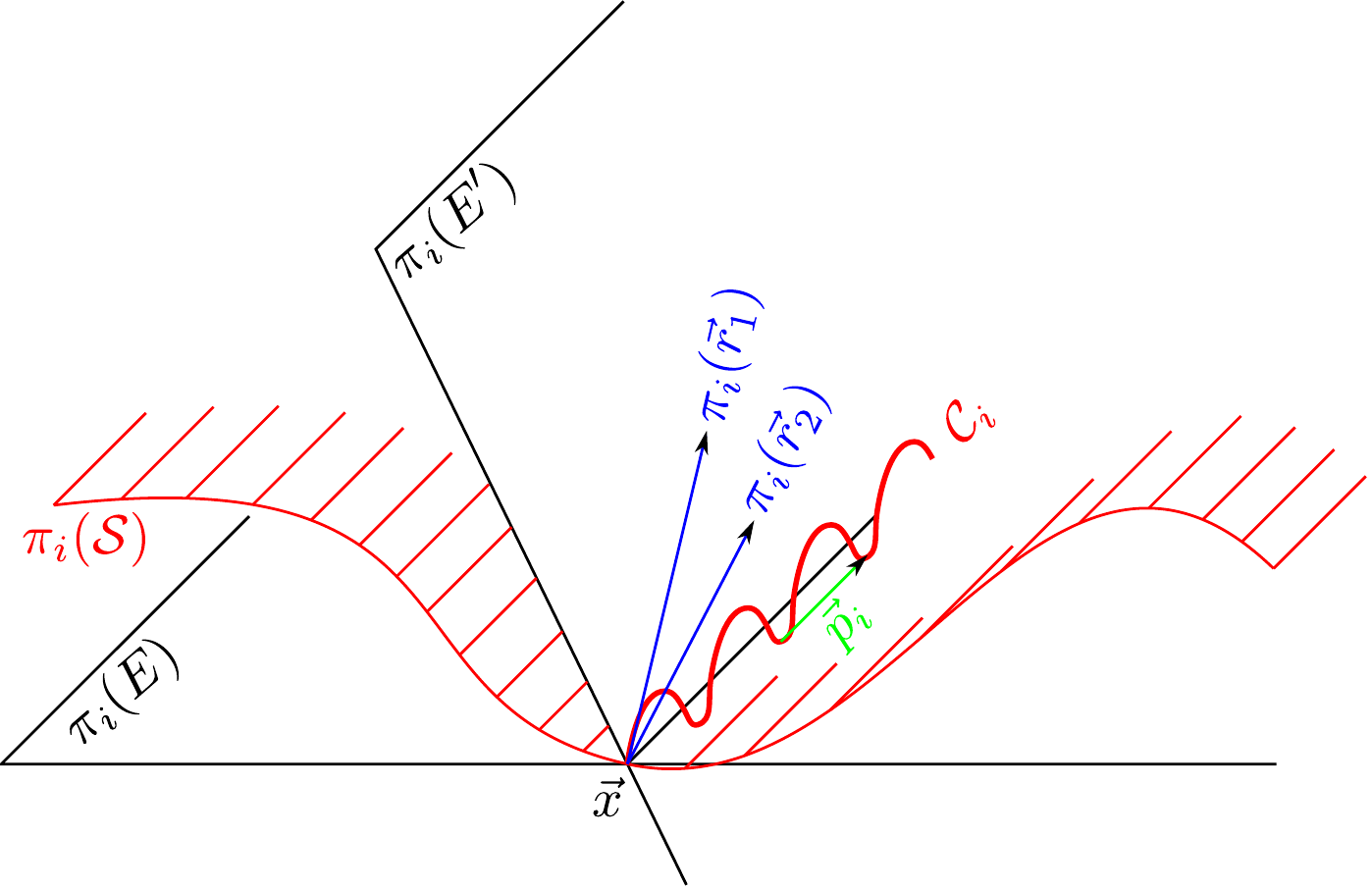}
\caption{
The projected lift $\pi_i(\mathcal{S})$ is $\vec{p}_i$-periodic: it intersects the plane $\pi_i(E')$, which contains $\vec{p}_i$, along a $\vec{p}_i$-periodic curve $\mathcal{C}_i$.
}
\label{fig:subperiod_constraint}
\end{figure}

\noindent {\bf From bounded fluctuations to functional equations.}\\
Since $\vec{q}_1$ and $\vec{q}_2$ form a basis of $E$, let $z_i(\lambda,\mu)$ stand for $z_i(\lambda\vec{q}_1+\mu\vec{q}_2)$, $i\in\{1,2\}$, and write $f\equiv g$ if the difference of two functions $f$ and $g$ is uniformly bounded.
The bounded fluctuations of $z_1$ and $z_2$ in the directions $\vec{q}_1$ and $\vec{q}_2$ yield the existence of real functions $f$ and $g$ such that $z_2(\lambda,\mu)\equiv f(\mu)$ and $z_1(\lambda,\mu)\equiv g(\lambda)$.
Further, since $\vec{q}_3=\vec{q}_1+\alpha\vec{q}_2$, the bounded fluctuations of $z_2-\alpha z_1$ in the direction $\vec{q}_3$ yield the existence of a real continuous function $h$ such that $(z_2-\alpha z_1)(\lambda,\mu)\equiv h(\lambda+\alpha\mu)$.
Thus
$$
f(\mu)-\alpha g(\lambda)\equiv h(\lambda+\alpha\mu).
$$

\noindent {\bf From functional equations to planarity.}\\
Fix $\lambda=0$ to get $f(\mu)\equiv h(\alpha\mu)$.
Fix $\mu=0$ to get $-\alpha g(\lambda)\equiv h(\lambda)$.
Hence
$$
h(\alpha\mu)+h(\lambda)\equiv h(\lambda+\alpha\mu).
$$
Since $\alpha\neq 0$, one can replace $\alpha\mu$ by $\mu$, getting the functional equation
$$
h(\mu)+h(\lambda)\equiv h(\lambda+\mu).
$$
This easily yields the linearity of $h$ (up to bounded fluctuations), thus the linearity of $f$, $g$, $z_1$, $z_2$ and, finally, $\rho$.
The thickness is moreover uniformly bounded because the lifts are all Lipschitz surfaces with a common constant.
This completes the proof.
\end{proof}

\noindent The second lemma shows that the condition on subperiods is actually necessary:

\begin{lemma}\label{lem:planarity_codim2_CN}
If the subperiods of a codimension two rhombus tiling enforce irrational planarity then three of them, each in a shadow with only one subperiod, can be lifted in an irrational non-degenerated plane onto pairwise non-collinear vectors.
\end{lemma}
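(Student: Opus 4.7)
The plan is: assuming the subperiods enforce irrational planarity, fix an irrational non-degenerated slope $E$ whose subperiods are those we are given (such an $E$ exists by the definition of irrational enforcement), and deduce in two steps that (a) at least three of the four shadows of $E$ carry a unique prime subperiod, and (b) three such subperiods lift in $E$ to pairwise non-collinear vectors. All arguments are by contradiction, the ultimate obstruction always being either the rationality of $E$ or the vanishing of some Grassmann coordinate $G_{ij}$ of $E$ — both ruled out by the standing hypothesis.

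For (a), observe that the $ijk$-shadow is doubly-periodic exactly when $\pi_{ijk}(E)$ is a rational 2-plane of $\mathbb{R}^3$, equivalently when $G_{ij}$, $G_{ik}$, $G_{jk}$ are pairwise rationally proportional (nonzero since $E$ is non-degenerated). Any two 3-subsets of $\{1,2,3,4\}$ share exactly two indices, so two doubly-periodic shadows would share a Grassmann coordinate through which their proportionalities chain to cover five of the six $G_{ij}$; the unique Plücker relation $G_{12}G_{34} = G_{13}G_{24} - G_{14}G_{23}$ would then pin down the sixth as a rational multiple, making $E$ rational — a contradiction. Hence at most one shadow is doubly-periodic. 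Configurations with fewer than three singly-periodic shadows (some shadows being zero-periodic) I would rule out by invoking the lift parametrization from Lemma~\ref{lem:planarity_codim2_CS}: without three non-collinear subperiod lifts, the functions $z_1, z_2$ controlling the lift are only constrained by bounded fluctuations in at most two independent directions, so they admit unbounded (hence non-planar) choices, producing a non-planar tiling with the same subperiods — contradicting the enforcement hypothesis.

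For (b), pick three singly-periodic shadows $S_1, S_2, S_3$ with prime subperiods $\vec{p}_i$ and lifts $\vec{q}_i \in E$, and suppose two lifts are collinear, say $\vec{q}_2 = \lambda \vec{q}_1$. Since $|S_1 \cap S_2| = 2$, both lifts have integer entries at the two shared positions. If at least one of $\vec{q}_1$'s shared entries is nonzero, then $\lambda \in \mathbb{Q}$, which forces every coordinate of $\vec{q}_1$ and $\vec{q}_2$ to be rational, so $E$ contains a nonzero rational vector — contradicting irrationality. Otherwise both shared entries of $\vec{q}_1$ vanish, whence $\vec{p}_1 \parallel \vec{e}_k$ and $\vec{p}_2 \parallel \vec{e}_l$ for $\{k,l\} = \{1,\ldots,4\} \setminus (S_1 \cap S_2)$, placing $\vec{q}_1$ in the rational coordinate plane $\mathrm{span}(\vec{e}_k, \vec{e}_l)$. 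A counting argument on 3-subsets of $\{1,2,3,4\}$ forces $S_3 \supset \{k,l\}$; one checks $\vec{q}_3 \not\parallel \vec{q}_1$ (else $\vec{q}_3$ would be a rational vector in $E$), so $E = \mathrm{span}(\vec{q}_1, \vec{q}_3)$, and the Grassmann coordinate $G_{mn}$ indexed by $\{m,n\} = S_1 \cap S_2$, computed from this basis, is $G_{mn} = q_1^m q_3^n - q_1^n q_3^m = 0$ since $q_1^m = q_1^n = 0$ — contradicting the non-degeneracy of $E$.

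The hard part is the degenerate subcase of step (b), where the usual trick of extracting rationality of $\lambda$ from the shared integer coordinates breaks down because those coordinates vanish. My plan to navigate it is the geometric–combinatorial route just sketched: confine the collinear pair to a rational coordinate 2-plane, pin down $S_3$ by counting on the four possible shadows, and read off a zero Grassmann coordinate from the explicit basis $(\vec{q}_1, \vec{q}_3)$. A secondary subtlety, in step (a), is ensuring enough singly-periodic shadows exist to even pick three; this is where the enforcement hypothesis is truly needed, via the construction of non-planar counterexamples from unbounded profiles of $z_1, z_2$ whenever the available non-collinear lifts are too few.
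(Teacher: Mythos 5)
Your overall architecture is sound, and your step (b) is a genuinely different argument from the paper's. The paper proves the lemma by splitting the negation into two cases --- (1) all subperiods lift into at most two lines of $E$, (2) at most two shadows are singly periodic --- and in both cases builds an explicit non-planar surface $\mathcal{S}_{f,g}=\{\lambda\vec{q}_1+\mu\vec{q}_2+f(\lambda)\vec{r}_1+g(\mu)\vec{r}_2\}$ (with $\vec{r}_i$ the lifts of the subperiods in a transverse plane $E'$ having the same subperiods), chosen so that it keeps every subperiod while being non-planar, e.g.\ with $f(x)=x^3$. Your step (b) replaces the paper's Case~1 by a purely algebraic observation: two subperiods from distinct shadows share two integer coordinates, so collinearity of their lifts forces either a rational direction in $E$ or a vanishing Grassmann coordinate. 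This is correct (in the degenerate subcase you can even skip $\vec{q}_3$: once $\vec{q}_1\in\mathrm{span}(\vec{e}_k,\vec{e}_l)$, the coordinate $G_{mn}$ vanishes for any basis of $E$ containing $\vec{q}_1$), and it buys something real: non-collinearity of the lifts is \emph{automatic} for an irrational non-degenerated slope, so the enforcement hypothesis is only needed to produce three singly-periodic shadows. Your count showing that at most one shadow can be doubly periodic (via the single Plücker relation) is also correct and is not made explicit in the paper.

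The one place where your plan is thinner than it needs to be is the counterexample construction in step (a). As stated, ``the functions $z_1,z_2$ are only constrained by bounded fluctuations in at most two independent directions'' is literally false when one shadow is doubly periodic: that shadow carries infinitely many subperiods, lifting to infinitely many distinct lines of $E$, each of which naively imposes its own periodicity constraint on the perturbed surface. The resolution --- which is precisely the content of the paper's Case~2 --- is that for such a shadow $\pi_i(E)$ is a rational plane, hence coincides with $\pi_i(E')$, so that $\pi_i(\mathcal{S}_{f,g})\subseteq\pi_i(E)$ and all of that shadow's periods are preserved for free, whatever $f$ and $g$ are; only the singly-periodic shadows genuinely constrain $f$ and $g$. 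You need to include this observation (and also to record that the non-planar surface can be digitized into an actual rhombus tiling with the same subperiods, as the paper does by taking a tiling lifted into $\mathcal{S}_{f,g}+[0,1]^n$). With those two points added, your proof closes.
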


\begin{proof}
Let $\mathcal{T}$ be a planar codim. $2$ tiling with an irrational non-degenerated slope $E$ whose sub\-pe\-riods $\vec{p}_1,\ldots,\vec{p}_k$ enforce planarity.
As in Lem.~\ref{lem:planarity_codim2_CS}, there is a plane $E'$ with at least the subperiods of $E$ such that $E\cap E'=\{0\}$.
Let $\pi_i$ denote the projection onto the shadow which contains $\vec{p}_i$.
Let $\vec{q}_i$ and $\vec{r}_i$ denote the lift of $\vec{p_i}$ respectively in $E$ and $E'$.
The proof shall be by contradiction.
Let us separate two cases.\\

\noindent {\bf Case 1: The subperiods, once lifted in $E$, belong to at most two lines.}\\
Assume that there are exactly two such lines, say $\mathbb{R}\vec{q}_1$ and $\mathbb{R}\vec{q}_2$ (this is all the more true if there are only one line).
For any two real functions of a real variable $f$ and $g$, define
$$
\mathcal{S}_{f,g}:=\{\lambda\vec{q}_1+\mu\vec{q}_2+f(\lambda)\vec{r}_1+g(\mu)\vec{r}_2~|~\lambda,\mu\in\mathbb{R}\}.
$$
For $i$ such that the lift of $\vec{p}_i$ belong to $\mathbb{R}\vec{q}_1$, say $\vec{p}_i=\alpha_i\vec{q}_1$, one has
$$
\pi_i(\mathcal{S}_{f,g})=\{(\lambda+f(\lambda))\frac{1}{\alpha_i}\vec{p}_i+\mu\pi_i(\vec{q}_2)+g(\mu)\pi_i(\vec{r}_2)~|~\lambda,\mu\in\mathbb{R}\}.
$$
It follows that $\vec{p}_i$ is a period of $\pi_i(\mathcal{S}_{f,g})$ as soon as $\{\lambda+f(\lambda)~|~\lambda\in\mathbb{R}\}$ is stable under the translation $x\to x+\alpha_i$.
Similarly, for $i$ such that the lift of $\vec{p}_i$ belong to $\mathbb{R}\vec{q}_2$, say $\vec{p}_i=\beta_i\vec{q}_2$, $\vec{p}_i$ is a period of $\pi_i(\mathcal{S}_{f,g})$ as soon as $\{\mu+g(\mu)~|~\mu\in\mathbb{R}\}$ is stable under the translation $x\to x+\beta_i$.
For such functions $f$ and $g$, consider a tiling whose lift lies in $\mathcal{S}_{f,g}+[0,1]^n$ (that is, an approximation of $\mathcal{S}_{f,g}$).
It has the same subperiods as $\mathcal{T}$.
But it is not necessarily planar: take, for example, $f(x)=g(x)=x^3$.
This yields the wanted contradiction.\\

\noindent {\bf Case 2: There are at most two shadows with only one subperiod.}\\
Assume that there are exactly two such shadows, say those with subperiods $\vec{p}_1$ and $\vec{p}_2$ (this is all the more true if there are less such shadows)
For $i\in\{1,2\}$, let $\vec{q}_i$ and $\vec{r}_i$ be the lifts of $\vec{p}_i$, respectively in $E$ and $E'$.
We define $\mathcal{S}_{f,g}$ as above.
For $i\in\{1,2\}$, $\pi_i(\vec{q}_i)=\pi_i(\vec{r}_i)=\vec{p}_i$, so that $\vec{p}_i$ is a period of $\pi_i(\mathcal{S}_{f,g})$ as soon as $\{\lambda+f(\lambda)~|~\lambda\in\mathbb{R}\}$ is stable under the translation $x\to x+1$.
For $i\notin\{1,2\}$, $\pi_i(E)$ has at least one subperiod by definition of the $\pi_i$'s, hence two because of our initial hypothesis.
This is thus a rational plane of $\mathbb{R}^3$, hence equal to its algebraic conjugate $\pi_i(E')$.
It follows that $\pi_i(\mathcal{S}_{f,g})=\pi_i(E)=\pi_i(E')$.
In particular, $\pi_i(\mathcal{S}_{f,g})$ is $\vec{p}_i$-periodic.
So, again, we can choose $f$ and $g$ to obtain a non planar tiling which has the same subperiods as $\mathcal{T}$.
This yields the wanted contradiction.
\end{proof}

The last lemma shows that the condition on subperiods is satisfied in particular when superiods characterize only finitely many slopes (that is a necessary condition to have local rules with our method):

\begin{lemma}\label{lem:planarity_codim2_CP}
If the subperiods of a codimension two rhombus tiling characterize finitely many slopes, then they enforce irrational planarity.
\end{lemma}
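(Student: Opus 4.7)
My plan is to argue the contrapositive on the Plücker quadric $Gr(2,\mathbb{R}^4) \subset \mathbb{P}^5$: if the condition of Lemma~\ref{lem:planarity_codim2_CS} fails for the slope $E$ of our tiling, then I would exhibit a positive-dimensional family of slopes with the given subperiods, contradicting finiteness. Throughout I assume $E$ is irrational and non-degenerate, since the remaining cases are handled by Proposition~\ref{prop:ratio2}. Write $a$, $b$, $c$ for the number of shadows of $E$ having $0$, exactly one, and at least two subperiods respectively, so $a+b+c=4$.

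First I would show $c \leq 1$ and $b \geq 3$. If two shadows were rational, their normal vectors $(G_{jk},-G_{ik},G_{ij})$ would both be rational, and the Plücker relation $G_{12}G_{34}=G_{13}G_{24}-G_{14}G_{23}$ would propagate rationality to the remaining Grassmann coordinates, contradicting the irrationality of $E$. Next, each unique-subperiod shadow contributes one linear relation on the Grassmann coordinates (Proposition~\ref{prop:subperiod_grassmann}) and each rational shadow contributes two; since $Gr(2,\mathbb{R}^4)$ has projective dimension four, a zero-dimensional intersection requires at least four \emph{independent} such relations. The case $c=0$ forces $b\geq 4$ at once. The borderline case $c=1$, $b=2$ is the heart of the argument: the rational shadow determines $(G_{12}:G_{13}:G_{23})$ as a rational point, so the remaining two linear relations together with Plücker form a system in $(G_{14},G_{24},G_{34})$ whose coefficients are all rational. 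Either the system is underdetermined, yielding a one-parameter family of slopes, or its unique solution is rational, forcing $E$ itself to be rational; both outcomes contradict our hypotheses.

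It then remains to rule out collinearity. Suppose, toward a contradiction, that three unique subperiods of $E$ lift to a common line $\mathbb{R}\vec{q}\subset E$. Each such subperiod equals $\pi_i(\alpha_i\vec{q})$ for some scalar $\alpha_i$, and---since irrationality of $E$ precludes a coordinate axis from lying in $E$---the three linear relations of Proposition~\ref{prop:subperiod_grassmann} are jointly equivalent to the single codimension-two condition $\vec{q}\in E'$. They therefore contribute only two independent constraints on $E'\in Gr(2,\mathbb{R}^4)$; together with the at most two constraints from the rational shadow when $c=1$, the resulting variety has dimension at least one, giving infinitely many slopes---a contradiction once more. Both halves of the condition of Lemma~\ref{lem:planarity_codim2_CS} thus hold; the lemma then delivers planarity, and irrationality follows from the fact that $E$ is itself irrational and non-degenerate.

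The main obstacle is the borderline case $c=1$, $b=2$ of the second step: a naïve dimension count gives exactly four constraints, and one must exploit the rationality of the coefficients coming from the rational shadow to force either rationality of $E$ or a strict linear dependence. A parallel subtlety appears in the collinearity step, where the passage from three Grassmann relations to the single geometric condition $\vec{q}\in E'$ must be checked by verifying that no coordinate axis lies in $E$, a property guaranteed by irrationality of $E$.
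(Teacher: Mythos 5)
Your strategy---counting the linear constraints that subperiods impose on the Plücker quadric and showing that failure of the condition of Lemma~\ref{lem:planarity_codim2_CS} leaves too few of them to cut the four-dimensional $Gr(2,\mathbb{R}^4)$ down to dimension zero---is genuinely different from the paper's proof, which is a one-liner: when the condition fails, the surfaces $\mathcal{S}_{f,g}$ built in the proof of Lemma~\ref{lem:planarity_codim2_CN} with \emph{linear} $f(\lambda)=\alpha\lambda$ and $g(\mu)=\beta\mu$ are honest planes with the prescribed subperiods, and varying $\alpha,\beta$ exhibits an explicit infinite family of slopes. Your first two steps (at most one rational shadow; elimination of the case $c=1$, $b=2$ by rationality of the residual linear system) are essentially sound, modulo a word on why a positive-dimensional solution variety through the real point $E$ carries infinitely many \emph{real} nearby slopes.

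The genuine gap is the collinearity step. First, the negation of ``three lifts pairwise non-collinear'' is that the lifts of the single-subperiod shadows span \emph{at most two lines} of $E$, not that three of them share a line: the configuration $b=4$ with two subperiods lifting into each of two lines is simply not treated, and there a naive count sees four linear relations, which could a priori be independent (the paper's construction shows the solution set is in fact two-dimensional there---a degeneracy your count cannot detect). Second, even in the configuration you do treat, the arithmetic does not deliver the stated conclusion: two constraints from $\vec{q}\in E'$ plus two from the rational shadow make four constraints on a four-dimensional variety, hence dimension $\geq 0$, not $\geq 1$. (The conclusion happens to be true because, once $\vec{q}\in E'$ is imposed, the rational shadow contributes only one further independent condition---the planes through $\mathbb{R}\vec{q}$ inside the $3$-space $\pi_{i_0}^{-1}(\pi_{i_0}(E))$ form a $\mathbb{P}^1$---but this must be argued.) The clean repair is to observe that all these configurations are vacuous: if two subperiods from \emph{distinct} shadows lift to collinear vectors of $E$, comparing the two coordinates the shadows share forces the common direction to be rational unless both of those coordinates vanish, in which case the complementary Grassmann coordinate of $E$ vanishes; either way $E$ fails to be irrational and non-degenerate. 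With that observation pairwise non-collinearity is automatic and your third paragraph can be discarded; without it, the proof as written does not close.
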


\begin{proof}
If the subperiods do not enforce irrational planarity, then one can take $f(\lambda)=\alpha\lambda$ and $g(\mu)=\beta\mu$ for any $\alpha\neq-1$ and $\beta\neq -1$ in the proof of the previous lemma: this yields infinitely many slopes with these subperiods.
\end{proof}

\section{Higher codimension}
\label{sec:higher_codim}

\subsection{A partial result and a conjecture}
\label{sec:statement_higher_codim}

In codimension two (that is, for tilings whose lift lives in $\mathbb{R}^4$), Theorem~\ref{th:planarity_codim2} provides a necessary and sufficient condition on the subperiods of a tiling to ensure that it is planar.
The codimension two case can then be helpful to solve higer codimension cases.
Indeed, consider the projections of a tiling onto the space generated by four basis vector (those are a kind of generalization of the shadows, Def~\ref{def:subperiod}).
This yields codimension two tilings to which Theorem~\ref{th:planarity_codim2} can be applied.
We can then use the (eventual) planarity of these projections to (eventually) get the planarity of the original tiling.
We shall see successful cases in the following sections, namely the famous Penrose tilings (actually, a slightly generalized version) and a codimension four tiling based on cubic irrationality (whose main interest, beyond illustrating the method, is to show that cubic irrationality can be already obtained in codimension four).\\

However, we think that there are tilings whose subperiods enforce planarity, although no projection on four basis vectors does have subperiods which enforce its planarity.
That is, the above method is not expected to always work.
Moreover, this do not provide a full characterization of planarity in higher codimension.
Nevertheless, we conjecture that Corollary~\ref{cor:local_rules_codim2} naturally extends:
\begin{conjecture}
If there are only finitely many slopes with the same subperiods as a given slope, then this slope admits local rules.
\end{conjecture}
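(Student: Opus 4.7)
The most natural strategy is to extend the three-lemma architecture behind Theorem~\ref{th:planarity_codim2} to arbitrary $n$, working directly in $\mathbb{R}^n$ rather than through projections onto $4$-dimensional coordinate subspaces (the excerpt itself warns that such a reduction cannot always succeed). Given a slope $E\subset\mathbb{R}^n$ whose subperiods determine only finitely many slopes $E=E^{(1)},\ldots,E^{(N)}$, Prop.~\ref{prop:subperiod_rules} already yields local rules of some diameter $R_0$ that enforce every subperiod. It is then enough to prove the planarity step---that any rhombus tiling $\mathcal{T}$ carrying these subperiods is planar with slope among the $E^{(j)}$'s and uniformly bounded thickness---after which one enlarges $R\geq R_0$ until the $R$-atlases of the $N$ candidate strongly planar tilings become pairwise distinct, and keeps only that of $E$.

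For the planarity step I would choose a complementary subspace $W$ of dimension $n-2$, ideally generated by lifts into a transverse plane $E'$ of a well-chosen subset of the subperiods of $E$, in the spirit of Lem.~\ref{lem:planarity_codim2_CS}. Projecting $\mathbb{R}^n=E\oplus W$ parallel to $W$ conjugates the lift $\mathcal{S}$ of $\mathcal{T}$ with a map $\rho:\vec{x}\mapsto\vec{x}+\sum_{k=1}^{n-2}z_k(\vec{x})\vec{r}_k$, where $\vec{r}_1,\ldots,\vec{r}_{n-2}$ is a basis of $W$. The $\vec{p}_i$-periodicity of the shadow containing each subperiod $\vec{p}_i$ then translates, exactly as in codimension two, into a bounded-fluctuation constraint: a specific integer linear combination of the $z_k$'s has uniformly bounded variation along a specific direction of $E$. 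The goal is to show that the whole collection of such constraints, combined with the Lipschitz continuity of $\mathcal{S}$, forces every $z_k$ to be affine up to bounded fluctuations.

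The main obstacle is the gap between an algebraic hypothesis (finiteness of the joint solution set of the subperiod linear equations together with the Plücker relations on Grassmann coordinates) and an analytic conclusion (linearity up to bounded fluctuations of the $n-2$ coupled height functions). In codimension two there is a single height function and a single Cauchy-type functional equation, which one solves by hand. In higher codimension the functional equations become coupled, and their bounded additive solutions are not automatically linear; more strikingly, the authors expect examples of tilings whose subperiods enforce planarity even though no $4$-dimensional coordinate projection does, so any pure reduction to the codimension-two case is bound to fail.

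I would try to close this gap by contrapositive, in the spirit of Lem.~\ref{lem:planarity_codim2_CP}. Assuming that planarity fails, I would adapt the $\mathcal{S}_{f,g}$ construction of Lem.~\ref{lem:planarity_codim2_CN} to build planar tilings with linear height functions $z_k(\vec{x})=\vec{\alpha}_k\cdot\vec{x}$, where the vectors $\vec{\alpha}_k$ range over an explicit affine variety $V$ cut out by the subperiods. A direct Grassmann-coordinate computation should then produce a family of slopes sharing the subperiods of $E$ whose dimension equals $\dim V$; any positive-dimensional $V$ contradicts finiteness. The delicate point---where I expect the technical heavy lifting to lie---is to certify that $V$ is genuinely positive-dimensional whenever planarity fails, or equivalently that the subperiods impose exactly matching ranks on the analytic parameter space of the $z_k$'s and on the algebraic parameter space of the Grassmann coordinates. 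This rank-matching between the analytic and algebraic sides is, in my view, the crux of the conjecture.
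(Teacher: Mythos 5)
The statement you are addressing is stated in the paper as a \emph{conjecture}: the authors explicitly leave it open, proving it only in codimension two (Corollary~\ref{cor:local_rules_codim2}) and, via projections onto four-dimensional coordinate subspaces, in a handful of higher-codimension examples (generalized Penrose tilings, a cubic dodecagonal tiling, $n$-fold tilings for suitable $n$). So there is no proof in the paper to compare yours against, and your text must stand on its own. As it stands it is not a proof but a programme. You correctly reduce the problem to the planarity step, and you correctly set up the analytic side (the parametrization $\rho$, the $n-2$ height functions $z_k$, the bounded-fluctuation constraints coming from the $\vec{p}_i$-periodicity of the shadows). But the two places where the argument must actually close are exactly the places you flag as ``the technical heavy lifting'' and leave unproven.

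Concretely, the contrapositive you propose does not transport the logic of Lemma~\ref{lem:planarity_codim2_CP} to higher codimension. In codimension two that lemma is a one-line corollary of Lemma~\ref{lem:planarity_codim2_CN}, whose proof rests on an exhaustive geometric case analysis (either the lifted subperiods span at most two lines of $E$, or at most two shadows carry a single subperiod), and it is this case analysis that simultaneously produces the non-planar surfaces $\mathcal{S}_{f,g}$ and, upon taking $f$ and $g$ linear, an infinite family of slopes sharing the subperiods. In codimension at least three no analogue of this dichotomy is available, and it is not true in general that failure of planarity yields a positive-dimensional family of \emph{planar} counterexamples: a non-planar tiling carrying the prescribed subperiods need not be deformable into planar tilings with new slopes, so the finiteness of the slope variety gives you no contradiction by itself. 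The ``rank-matching between the analytic and the algebraic parameter spaces'' that you defer is not a technical detail to be certified at the end; it is the entire content of the conjecture, and the authors themselves warn that the reduction to the codimension-two theorem is expected to fail for some slopes. Your proposal therefore has a genuine gap at its central step.
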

In other words, we conjecture that if subperiods yield enough constraints on planar tilings to enforce their slope, then they {\em a fortioti} yield enough constraints on tilings to enforce their planarity.

\subsection{First example: generalized Penrose tilings}
\label{sec:generalized_penrose}

Discovered by Penrose in the 70's \cite{penrose2}, the Penrose tilings appear in a number of versions (see, {\em e.g.}, \cite{GS}).
Thoses with rhombus tiles have been shown by de Bruijn \cite{debruijn} to be strongly planar with a lift in $\mathbb{R}^5$ whose slope is generated by the two vectors $(\cos(2k\pi/5))_k$ and $(\sin(2k\pi/5))_k$, $k=0,\ldots,4$.
This slope has Grassmann coordinates $(\varphi,1,-1,-\varphi,\varphi,1,-1,\varphi,1,\varphi)$, where $\varphi$ is the golden ratio, and can also be generated by
$$
\vec{u}:=(\varphi,0,-\varphi,-1,1)
\qquad\textrm{and}\qquad
\vec{v}:=(-1,1,\varphi,0,-\varphi).
$$
We here consider so-called generalized Penrose tilings, introduced in \cite{KP}, which are the strongly planar rhombus tilings whose slope is parallel to the one of Penrose tilings (recall that the slope is an affine plane).
They have ten subperiods (one in each shadow), associated with the equations
$$
G_{12}=G_{23}=G_{34}=G_{45}=-G_{15}
\quad\textrm{ and }\quad
G_{13}=G_{35}=-G_{25}=G_{24}=-G_{14}.
$$
Let us normalize to $G_{12}=1$ and write $G_{13}=x$.
There are five Plücker relations, which all reduce to the unique equation $x^2=x+1$, so that $x$ is equal to the golden ratio or its algebraic conjugate.
The subperiods thus characterize finitely many slopes: it suffices to show that they also enforce planarity to prove that generalized Penrose tilings admit local rules.
For that, project the slope onto the first four basis vectors.
It yields a slope $(\varphi,1,-1,\varphi,1,\varphi)$ which has four subperiods associated with the equations
$$
G_{12}=G_{23}=G_{34}
\qquad\textrm{and}\qquad
G_{13}=-G_{14}=G_{24}.
$$
We can thus apply Theorem~\ref{th:planarity_codim2}: this projection stays at bounded distance from the plane $(\varphi,1,-1,\varphi,1,\varphi)$.
Consider now the cartesian product of this plane with the line generated by the fifth basis vector: it is a three-dimensional vectorial space from which the tiling stays at bounded distance.
The same holds (by circular permutation of the indices) for the other projections on four of the five basis vectors, so that the tiling stays at bounded distance from the intersection of five three-dimensional vector spaces.
The two-dimensionality of this intersection shall yields the planarity of the tilings.
Consider a point $\vec{x}=(x_1,x_2,x_3,x_4,x_5)$ in this intersection.
There are real numbers $\lambda_i$ and $\mu_i$ such that $\pi_i(\vec{x})=\lambda_i\pi_i(\vec{u})+\mu_i\pi_i(\vec{v})$, where $\pi_i$ denotes the projection on the space generated by all the basis vectors but $\vec{e}_i$.
One checks that the $\lambda_i$'s are necessarily all equal to $-x_4$, and that the $\mu_i$'s are necessarily all equal to $x_2$.
This yields the wanted two-dimensionality of the intersection.
In conclusion, as conjectured in \cite{KP} and later proven in \cite{socolar}, the generalized Penrose tilings admit local rules.
Namely, the local rules which enforce the subperiods of the generalized Penrose tilings (Prop.~\ref{prop:subperiod_rules}).
One also has a bound on the diameter of the local rules, namely the largest subperiod lift (in the Penrose slope).
A computation yields the bound $\sqrt{2+2\varphi^2}\simeq 2.69$.

\begin{figure}[hbtp]
\includegraphics[width=\textwidth]{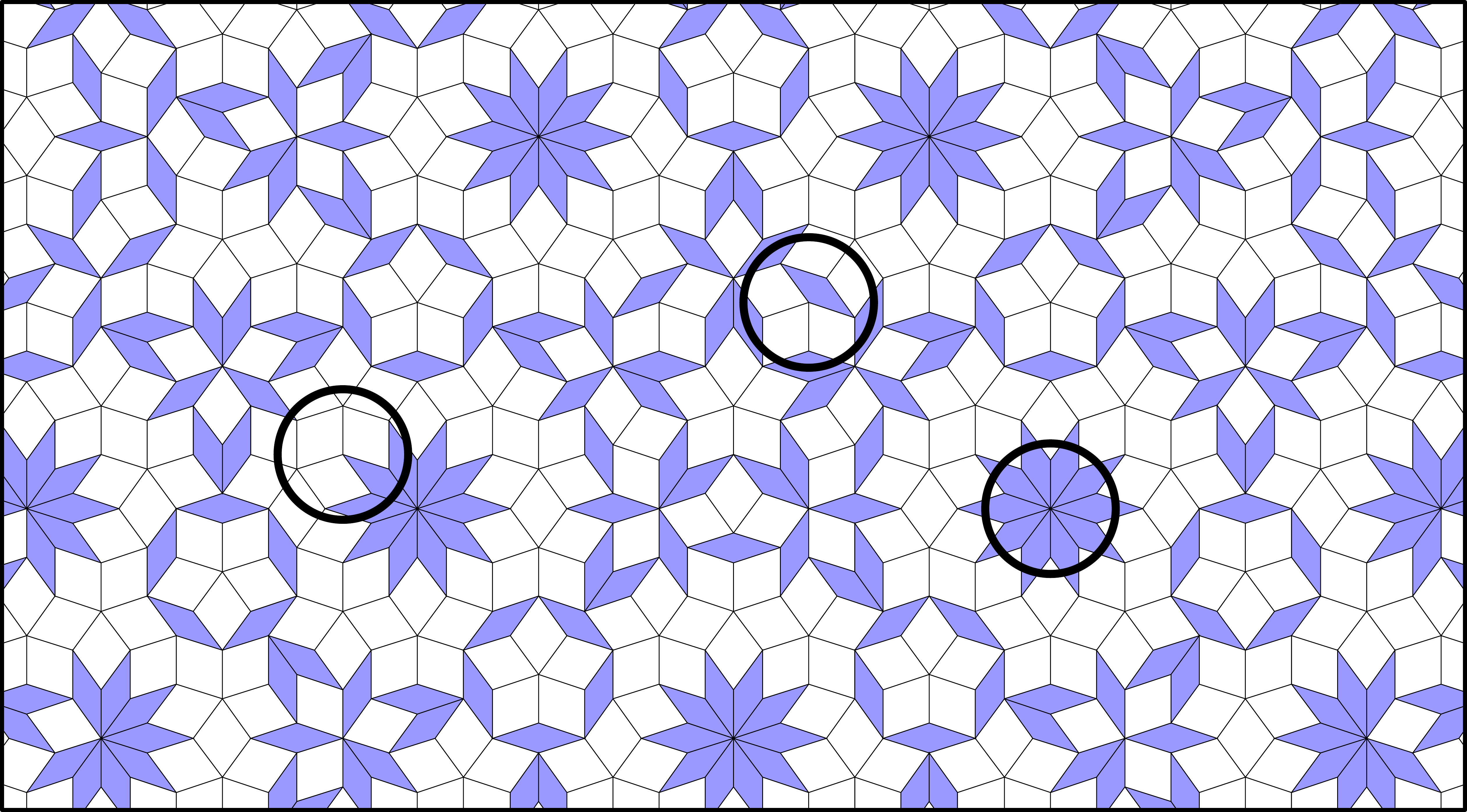}
\caption{A generalized Penrose tilings (compare with Fig.~\ref{fig:subperiods}), with circles bounding the diameter of the local rules it admits.}
\label{fig:penrose}
\end{figure}

\begin{figure}[hbtp]
\includegraphics[width=\textwidth]{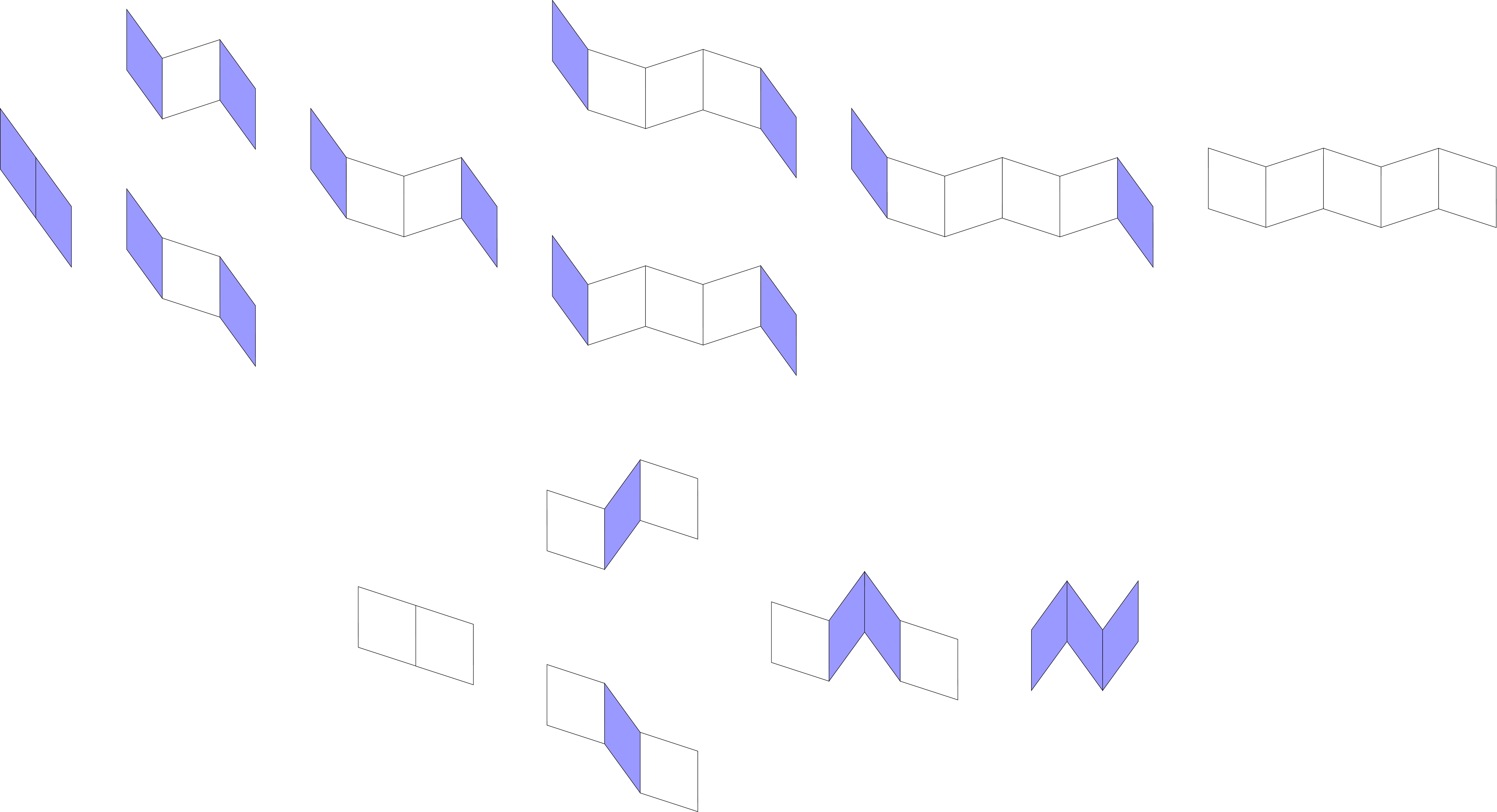}
\caption{
A set of forbidden patterns (depicted up to rotation by an angle multiple of $\frac{2\pi}{5}$ and mirror symmetry) which enforce the subperiods of generalized Penrose tilings, hence their slope (compare with Fig.~\ref{fig:forcing_subperiods}).}
\label{fig:penrose2}
\end{figure}

\subsection{Second example: a cubic dodecagonal tiling}
\label{sec:cubic_dodecagonal}

Let us consider an example in $\mathbb{R}^6$, namely the codimension four planes satisfying
$$
G_{12}=G_{23}=G_{34}=G_{45}=G_{56},
$$
$$
G_{35}=G_{13}=G_{16}=G_{46}=G_{24},
$$
$$
G_{14}=G_{15}=G_{25}=G_{26}=G_{36}.
$$
According to Propositions~\ref{prop:subperiod_rules} and \ref{prop:subperiod_grassmann}, these relations on Grassmann coordinates can be enforced by local rules.
One checks that, together with the $\binom{6}{4}$ Plücker relations, this form a zero-dimensional system with three real solutions:
$$
(1,a,b,b,a,1,a,b,b,1,a,b,1,a,1),
$$
where $a^3=a^2+2a-1$ and $b=a^2-1$.
A basis is given by
$$
(-1,0,1,a,b,b)
\quad\textrm{and}\quad
(0,1,a,b,b,a).
$$
It remains to show that subperiods also enforce planarity.
Fix a tiling $\mathcal{T}$ which has the above subperiods.
We shall consider two of its projections.
First, project orthogonally onto the space generated by $\vec{e}_1$, $\vec{e}_2$, $\vec{e}_3$ and $\vec{e}_5$.
The subperiods yield
$$
G_{12}=G_{23}
\qquad\textrm{and}\qquad
G_{35}=G_{13}
\qquad\textrm{and}\qquad
G_{15}=G_{25}.
$$
We can thus apply Theorem~\ref{th:planarity_codim2}: this projection of $\mathcal{T}$ stays at bounded distance from a plane, which can only be the projection of a solution of the whole system, that is, $(1,a,b,1,b,a)$, which is generated, {\em e.g.}, by $(-1,0,1,b)$ and $(0,1,a,b)$.
Second, project orthogonally onto the space generated by $\vec{e}_1$, $\vec{e}_4$, $\vec{e}_5$ and $\vec{e}_6$.
The subperiods yield
$$
G_{45}=G_{56}
\qquad\textrm{and}\qquad
G_{16}=G_{46}
\qquad\textrm{and}\qquad
G_{14}=G_{15}.
$$
We can thus apply Theorem~\ref{th:planarity_codim2}: this projection of $\mathcal{T}$ stays at bounded distance from a plane, which can only be the projection of a solution of the whole system, that is, $(b,b,a,1,a,1)$, which is generated, {\em e.g.}, by $(-b,0,1,a)$ and $(0,b,b,a)$.
Now, consider the vectorial space $V\subset\mathbb{R}^6$ which projects onto the two above slopes.
The lift of $\mathcal{T}$ thus stays at bounded distance from $V$.
The planarity shall follow once we prove that $V$ has dimension at most two.
Let $(x_1,x_2,x_3,x_4,x_5,x_6)\in V$.
There are numbers $\lambda_1$, $\mu_1$, $\lambda_2$ and $\mu_2$ such that
$$
\begin{array}{ccccc}
x_1 &=& -\lambda_1 &=& -b\lambda_2\\
x_2 &=& \mu_1 &=& \\
x_3 &=& \lambda_1+a\mu_1 &=&\\
x_4 &=& &=& b\mu_2\\
x_5 &=& b\lambda_1+b\mu_1 &=& \lambda_2+b\mu_2\\
x_6 &=& &=& a\lambda_2+a\mu_2
\end{array}
$$
One easily checks that these equations yield that $x_3$, $x_4$, $x_5$ and $x_6$ are completly determined by $x_1$ and $x_2$.
This shows that $V$ has dimension at most two, whence the planarity of $\mathcal{T}$.\\

In conclusion, the slope $(1,a,b,b,a,1,a,b,b,1,a,b,1,a,1)$, where $a$ is a root of $X^3-X^2-2X+1$ and $b=a^2-1$, does admit local rules.
One also has a bound on the diameter of the local rules, namely the largest subperiod lift (in the above slope): a computation yields the upper bound $2,821$.

\begin{figure}[hbtp]
\includegraphics[width=\textwidth]{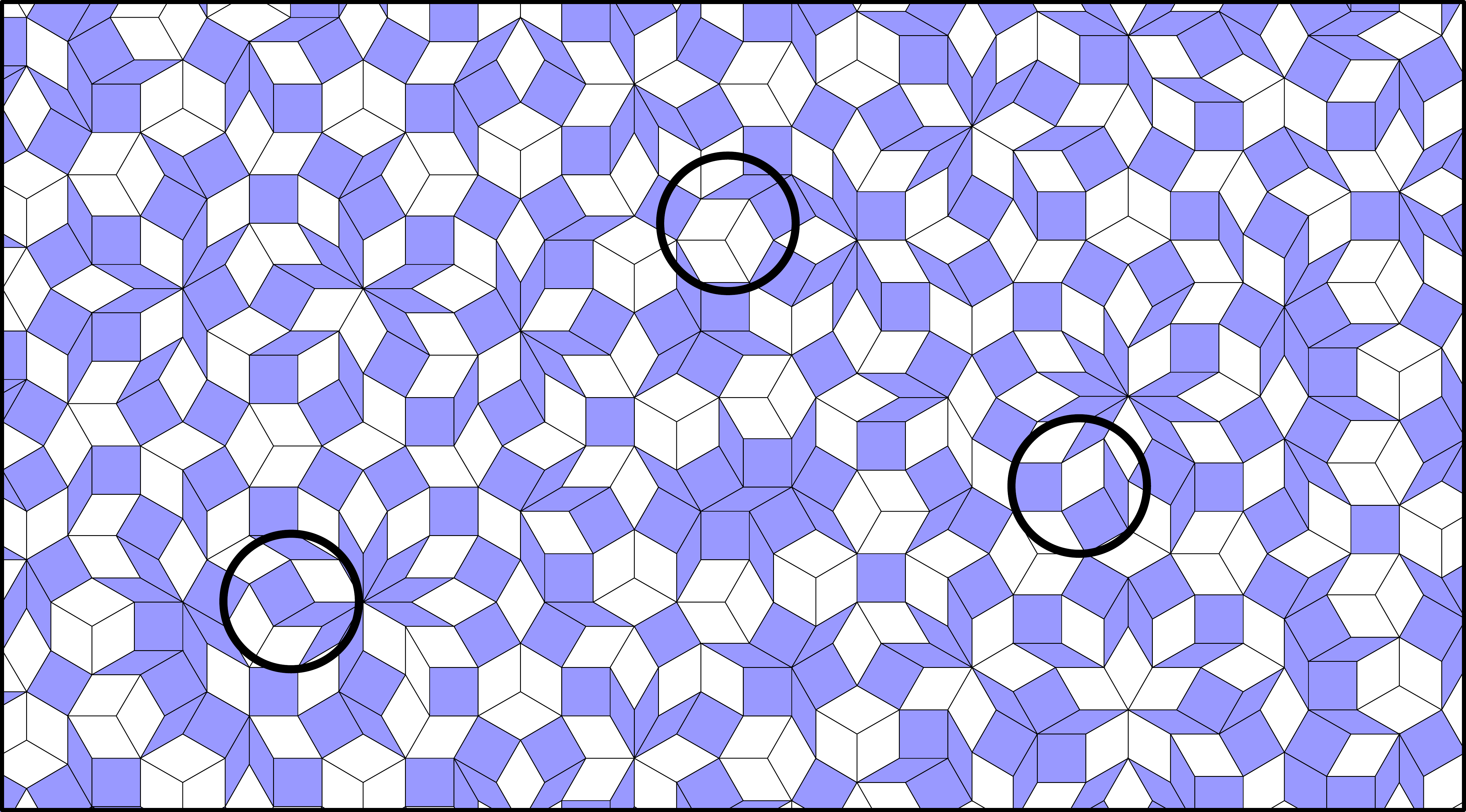}
\caption{A codimension four strongly planar tiling whose slope is based on cubic irrationalities, with circles bounding the diameter of the local rules it admits.}
\label{fig:cubic_dodecagonal}
\end{figure}

\subsection{Tilings with $n$-fold rotational symmetry}
\label{sec:n_fold}

\begin{definition}
A rhombus tiling is said do be {\em $n$-fold} if it is strongly planar with a slope parallel to the plane generated by $(\cos(2k\pi/n))_k$ and $(\sin(2k\pi/n))_k$, where $k$ range from $0$ to either $n-1$ if $n$ is odd, or from $0$ to $n/2-1$ if $n$ is even.
\end{definition}

\begin{figure}[hbtp]
\includegraphics[width=0.46\textwidth]{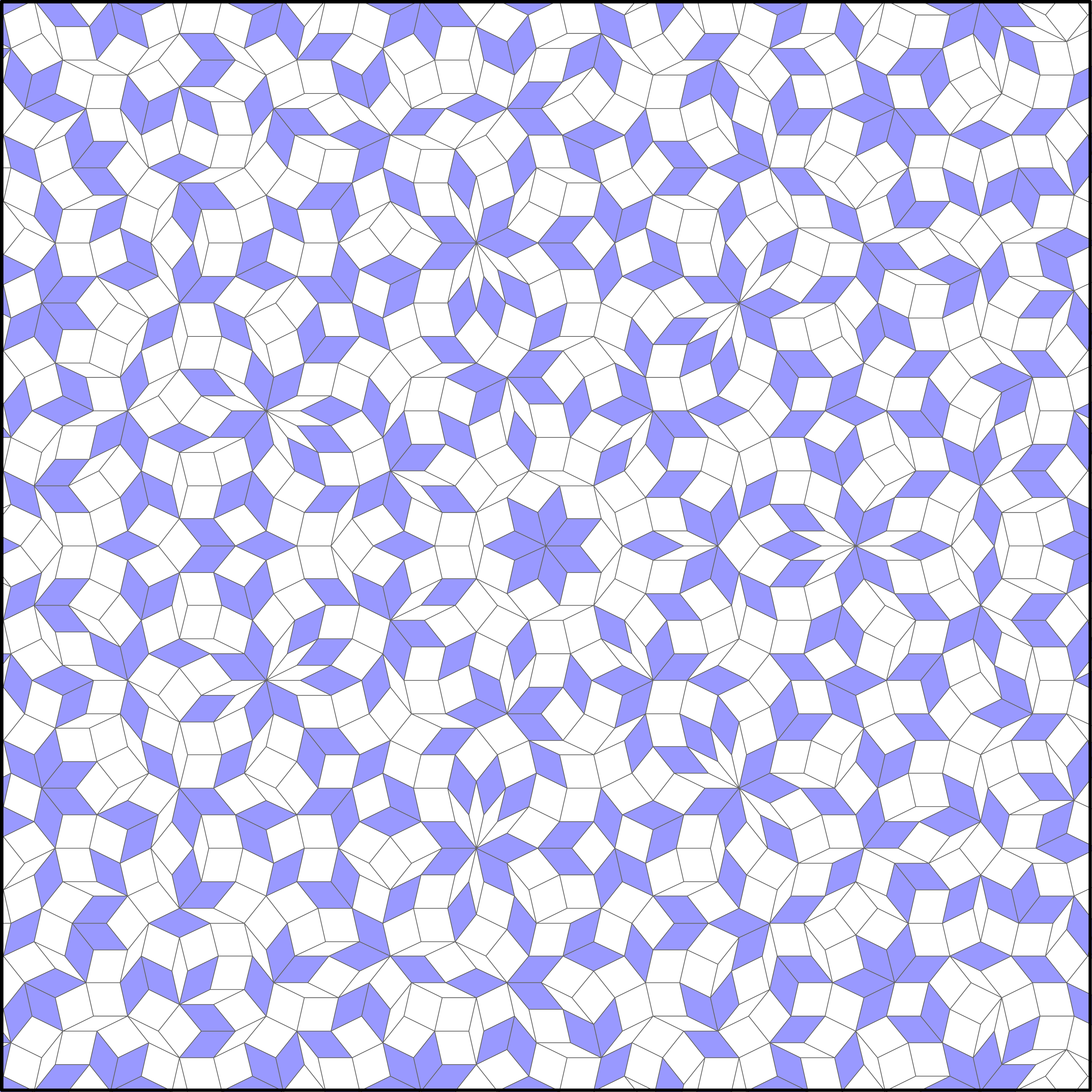}
\hfill
\includegraphics[width=0.46\textwidth]{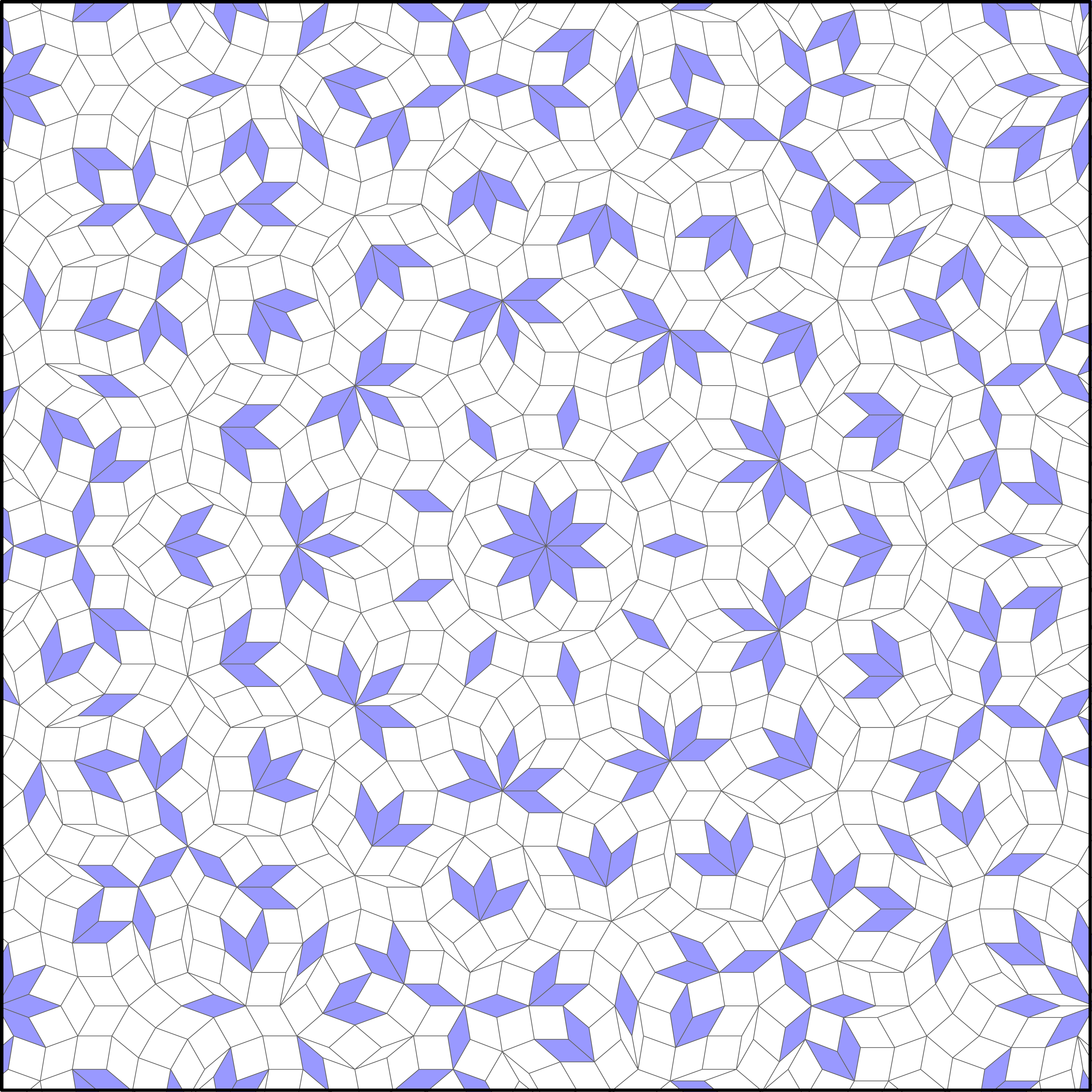}
\caption{A $7$-fold (left) and a $9$-fold (right) strongly planar tilings.}
\label{fig:n_fold}
\end{figure}

The name comes from the fact that these tilings contain arbitrarily large balls with a $n$-fold rotational symmetry (one speaks about {\em local} $n$-fold symmetry).
Fig.~\ref{fig:n_fold} illustrates the cases $n=7$ and $n=9$.
Let us stress that $n$-fold tilings lift in $\mathbb{R}^n$ for odd $n$, but in $\mathbb{R}^{n/2}$ for even $n$: this is because adopting the same definition for both cases would yield $n/2$ pairs of collinear vectors for even $n$.
We already met $n$-fold tilings in this paper: the Ammann-Beenker tilings (Fig.~\ref{fig:ammann}) are indeed $8$-fold and the generalized Penrose tilings (Fig.~\ref{fig:penrose}) are $5$-fold.\\

\noindent The Grassmann coordinates of a $n$-fold tiling are
$$
G_{ij}=\sin\left(\frac{2(j-i)\pi}{n}\right).
$$
It shall be convenient to set $G_{i,j+n}=G_{i,j}$ for $n$ odd, $G_{i,j+n/2}=-G_{i,j}$ for $n$ even, and $G_{ji}=-G_{ij}$ for any $n$.
For $i<j$, there is a subperiod associated with
$$
G_{ij}=G_{j,2j-i}.
$$
There are no other subperiod, except for $n=12p$, where the rationality of $\sin(\pi/6)$ yields for each $i$ two subperiods associated with
$$
G_{i,i+3p}=2G_{i,i+p}
\quad\textrm{and}\quad
G_{i,i+3p}=2G_{i,i+5p}.
$$
We say that a set of Grassmann coordinates are {\em free} if each of them can be chosen independently withou violating the Plücker relations.
We shall use:

\begin{lemma}\label{lem:G12G13}
The $G_{ij}$'s with $|j-i|\leq 2$ are free and determine all the other ones.
\end{lemma}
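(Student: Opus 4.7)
The plan is to use a single family of Plücker relations, namely those with four indices of the form $i<i+1<i+2<j$. Solving such a relation for $G_{i,j}$ gives
$$G_{i,j}=\frac{G_{i,i+2}\,G_{i+1,j}-G_{i,i+1}\,G_{i+2,j}}{G_{i+1,i+2}},$$
where on the right the three ``first-layer'' factors $G_{i,i+1}$, $G_{i,i+2}$, $G_{i+1,i+2}$ are near-neighbor, while the two remaining factors $G_{i+1,j}$, $G_{i+2,j}$ have index gaps $j-i-1$ and $j-i-2$, both strictly smaller than $j-i$. This recursion is what will do all the work.

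For the \textbf{determination} claim I would then induct on $d=j-i$. The base cases $d\leq 2$ are by definition near-neighbor; the recursion above handles $d\geq 3$ and expresses $G_{i,j}$ as a rational function of the $G_{k,k+1}$'s and $G_{k,k+2}$'s.

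For the \textbf{freeness} claim I would switch to a parametrization of the plane by two generators $\vec{u},\vec{v}$ normalized (generically) so that $u_1=v_2=1$ and $u_2=v_1=0$. This fixes $G_{12}=1$, and yields $G_{1k}=v_k$, $G_{2k}=-u_k$. For $j\geq 3$, the two near-neighbor coordinates
\begin{align*}
G_{j-2,j}&=u_{j-2}v_j-u_jv_{j-2},\\
G_{j-1,j}&=u_{j-1}v_j-u_jv_{j-1}
\end{align*}
form a linear system in $(u_j,v_j)$ whose determinant equals $G_{j-2,j-1}$, hence has a unique solution as long as $G_{j-2,j-1}\neq 0$. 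Iterating from $j=3$ reconstructs a plane from any prescribed near-neighbor values, which gives the desired freeness.

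The main obstacle I expect is book-keeping of non-degeneracy: the recursion needs the $G_{i+1,i+2}$'s non-zero, and the parametrization needs the $G_{j-2,j-1}$'s non-zero. Both are open conditions, consistent with the dimension count ($\dim \mathrm{Gr}(2,n)=2n-4$, i.e.\ $2n-3$ homogeneous coordinates, matching exactly the $2n-3$ near-neighbor coordinates), so freeness holds on a Zariski-dense open set, which is all that will be needed for the application to $n$-fold tilings.
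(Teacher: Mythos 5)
Your proof is correct, and it diverges from the paper's in ways worth noting. For the determination half you use the same induction on the gap $d=j-i$ via a Plücker relation, but you pick the relation on indices $i<i+1<i+2<j$, so the coordinate you divide by is always the first-layer coordinate $G_{i+1,i+2}$; the paper instead uses the relation on $i<i+1<j-1<j$ and divides by $G_{i+1,j-1}$, whose gap is $\delta-2$, so its recursion implicitly needs non-vanishing of all the intermediate coordinates it has just computed, not only the near-neighbor ones. Your choice is therefore slightly more economical in its non-degeneracy hypotheses (and it is in fact the relation the paper itself invokes later in the proofs of Propositions~\ref{prop:n_fold_dim0} and \ref{prop:n_fold_dim1}). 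For the freeness half the two arguments genuinely differ: the paper settles it with a one-line count (``as many Grassmann coordinates with $|j-i|\le 2$ as coordinates in two generating vectors''), which as literally stated compares $2n-3$ with $2n$ and leaves the reader to quotient by the $GL_2$ reparametrization and the projective scaling; your normalization $u_1=v_2=1$, $u_2=v_1=0$ performs exactly that quotient, and the iterative $2\times 2$ solve with determinant $G_{j-2,j-1}$ then reconstructs an actual plane from any prescribed near-neighbor values, which is a constructive and more airtight version of the same dimension count. Your closing caveat about non-vanishing of the $G_{i,i+1}$'s is the right one to flag; it is harmless here since the paper restricts throughout to non-degenerated slopes, and in the $n$-fold application every $G_{i,i+1}=\sin(2\pi/n)\neq 0$.
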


\begin{proof}
We first prove by induction on $\delta$ that these Grassmann coordinates determine those with $|j-i|\leq\delta$.
There is nothing to prove for $\delta=1$ and $\delta=2$.
Fix $\delta\geq 3$ and assume that any Grassmann coordinate $G_{ij}$ with $|j-i|< \delta$ is characterized.
Then, for $|j-i|=\delta$, the Plücker relation
$$
G_{i,i+1}G_{j-1,j}=G_{i,j-1}G_{i+1,j}-G_{ij}G_{i+1,j-1}
$$
shows that $G_{ij}$ depends only on coordinates $G_{kl}$ with $|k-l|<|j-i|=\delta$.
The claim follows by induction.
Now, since there are as many Grassmann coordinates with $|j-i|\leq 2$ as coordinates in two vectors which generate a plane (that is, twice the dimension of the space), these Grassmann coordinates are free.
\end{proof}

We first show that, except when $n$ is a multiple of $4$, the only planar rhombus tilings with the same subperiods as the $n$-fold tilings are the $n$-fold tilings:

\begin{proposition}\label{prop:n_fold_dim0}
If $4$ does not divide $n$, then the Plücker relations and thoses associated with the subperiods of a $n$-fold tiling form a zero-dimensional system.
\end{proposition}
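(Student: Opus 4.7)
The plan is to use Lemma~\ref{lem:G12G13} to reduce the Grassmann coordinates to the free parameters $G_{i,i+1}$ and $G_{i,i+2}$, then exploit the subperiod relations together with Plücker to collapse the system to a single polynomial equation in one variable, whose non-triviality relies on $4\nmid n$.

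Iterating the subperiod $G_{ij}=G_{j,2j-i}$, which preserves the gap $j-i$ while shifting $i$ by that gap (with the stated cyclic conventions), yields $\gcd(1,n)=1$ and hence all $G_{i,i+1}$ equal to a single parameter $a$. For gap $2$: when $n$ is odd, $\gcd(2,n)=1$ forces all $G_{i,i+2}$ equal to a single parameter $b$; when $n\equiv 2\pmod 4$ (so $n=2m$ with $m$ odd), the gap-$2$ orbit splits a priori into two parity classes $b_e,b_o$ according to the parity of $i$. In that second case the Plücker relation among four consecutive indices gives
\begin{equation*}
a^{2}=G_{i,i+2}G_{i+1,i+3}-a\,G_{i,i+3},
\end{equation*}
so $G_{i,i+3}=(b_eb_o-a^2)/a$ regardless of the parity of $i$. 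Chasing the higher-gap subperiod chains, which for $m$ odd necessarily cross the sign-flipping wrap $G_{i,j+m}=-G_{i,j}$ and therefore mix parity classes, equates this common value with expressions involving $b_e$ on one side and $b_o$ on the other; rearranging yields $a(b_e-b_o)=0$ and thus $b_e=b_o=:b$. Spurious degenerate branches that might arise along the way ($c=\pm a$, $b_eb_o=0$, and the like) are ruled out by further Plücker relations at higher gaps, which force them to exhibit a vanishing Grassmann coordinate.

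Once all $G_{i,i+k}$ depend only on $k$, setting $F_k:=G_{i,i+k}$ and using Plücker among four consecutive indices gives the three-term recurrence
\begin{equation*}
F_{k+1}=\frac{b}{a}F_k-F_{k-1},\qquad F_0=0,\quad F_1=a,
\end{equation*}
so $F_k=a\,U_{k-1}(b/(2a))$ where $U_{k-1}$ is the Chebyshev polynomial of the second kind. The conventions together with $G_{i,i}=0$ translate into the closure $F_n=0$ supplemented by $F_{n-k}=-F_k$ (odd $n$), or $F_m=0$ supplemented by $F_{m-k}=F_k$ (even $n=2m$), each of which is a non-trivial polynomial equation in the single unknown $b/a$ whose roots are $2\cos(2\ell\pi/n)$, $\ell=1,\ldots,(n-1)/2$ in the odd case and $2\cos((2\ell+1)\pi/m)$, $\ell=0,\ldots,(m-3)/2$ in the even case. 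The system is therefore zero-dimensional.

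The main obstacle is the parity collapse $b_e=b_o$, which is precisely what fails when $4\mid n$: for $n=2m$ with $m$ even, the sign-flipping wrap occurs only after an even number of gap-$2$ subperiod steps, the chains never cross between the two parity classes, and the Plücker relation ends up constraining only the product $b_eb_o$, leaving the one-parameter Ammann--Beenker-style family discussed in Section~\ref{sec:ammann_beenker}. The bulk of the technical work is therefore the sign bookkeeping of this collapse, and verifying uniformly in odd $m$ that the degenerate branches encountered along the way are all eliminated by additional Plücker relations.
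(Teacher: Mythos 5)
Your argument is in essence the paper's own: reduce to the free coordinates of Lemma~\ref{lem:G12G13}, use the subperiods to equate all gap-one and all gap-two Grassmann coordinates, and let the Pl\"ucker relation on four consecutive indices produce the Chebyshev recurrence $F_{k+1}=\tfrac{b}{a}F_k-F_{k-1}$, whose wrap-around closure is a non-trivial polynomial equation in $b/a$. The one place you diverge is the case $n\equiv 2\pmod 4$, and there you create work that does not exist: the gap-two subperiod $G_{i,i+2}=G_{i+2,i+4}$ steps the base point by $2$, and since $m=n/2$ is odd this single chain is already transitive on $\mathbb{Z}/m\mathbb{Z}$ --- after wrapping (with the sign convention $G_{i,j+m}=-G_{i,j}$) it identifies $G_{13}=G_{35}=\cdots=G_{m,m+2}=G_{24}=\cdots$ directly, exactly as in the odd case. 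So there are no \emph{a priori} parity classes $b_e,b_o$, hence no need for the Pl\"ucker computation of $G_{i,i+3}$, and none of the deferred ``sign bookkeeping'' or degenerate-branch elimination; this transitivity of the gap-two orbit is precisely where the hypothesis $4\nmid n$ enters (when $4\mid n$, $m$ is even, the orbit genuinely splits, and one lands in the one-parameter family of Proposition~\ref{prop:n_fold_dim1}). As written, your justification of $b_e=b_o$ (``chasing the higher-gap chains\ldots rearranging yields $a(b_e-b_o)=0$'') is the only step left vague, and it is rescued by the fact that the collapse already follows from the subperiods alone. One caveat you share with the paper: both arguments normalize by $G_{12}$, so the component of the variety with $G_{12}=0$ is silently set aside.
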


\begin{proof}
Let $m:=n$ if $n$ is odd, or $m:=n/2$ if $n$ is even.
Subperiods enforce
$$
G_{12}=G_{23}=G_{34}=\ldots=G_{m-1,m}=G_{m,m+1}.
$$
Since $4$ does not divide $n$, $m$ is odd, and subperiods enforce
$$
G_{13}=G_{35}=\ldots=G_{m-2,m}=G_{m,m+2}=G_{24}=G_{46}=\ldots=G_{m-1,m+1}.
$$
The Plücker relation
$$
G_{1,i}G_{i+1,i+2}=G_{1,i+1}G_{i,i+2}-G_{1,i+2}G_{i,i+1}
$$
can then be rewritten
$$
G_{1,i}G_{12}=G_{1,i+1}G_{13}-G_{1,i+2}G_{12}.
$$
With $X:=G_{13}/(2G_{12})$ and $U_i:=G_{1,i+2}/G_{12}$, this yields the recurrence relation
$$
U_0=1,\quad
U_1=2X,\quad
U_i=2X U_{i-1}-U_{i-2},
$$
which is exactly the one defining Chebyshev polynomials of the second kind.
Thus $X$ is one of the finitely many solutions of $U_{m-2}=G_{1,m}/G_{12}$.
The zero-dimensionality follows from Lemma~\ref{lem:G12G13}.
\end{proof}

In contrast, when $n$ is a multiple of $4$, there is a one-parameter family of planar rhombus tilings with the same subperiods as the $n$-fold tilings:

\begin{proposition}\label{prop:n_fold_dim1}
If $4$ divides $n$, then the Plücker relations and thoses associated with the subperiods of a $n$-fold tiling form a one-dimensional system.
\end{proposition}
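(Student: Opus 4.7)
The plan is to mirror the argument of Proposition~\ref{prop:n_fold_dim0}, tracking carefully where the even parity of $m:=n/2$ prevents the subperiod chains from collapsing completely. First I would unwind the subperiod relations $G_{ij}=G_{j,2j-i}$ using $G_{i,j+m}=-G_{ij}$. The $|j-i|=1$ chain still collapses all $G_{i,i+1}$, together with $G_{1,m}$ (arising from the wraparound at $(m-1,m)$), to a single value $\alpha$. However, when $m$ is even, iterating the $|j-i|=2$ subperiod starting from $G_{1,3}$ only visits odd starting indices: it closes via $G_{m-3,m-1}=G_{1,m-1}$, and a parallel even chain closes via $G_{m-2,m}=G_{2,m}$. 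So the system reduces to two a priori independent values $\beta_o$ and $\beta_e$ for these two chains, and the bridging equation of the odd case is absent.

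Next I would normalise $\alpha=1$ and set $B_o:=\beta_o$, $B_e:=\beta_e$. By Lemma~\ref{lem:G12G13} the triple $(1,B_o,B_e)$ determines every other Grassmann coordinate through Plücker, so at most two affine parameters remain. Writing $a_i:=G_{1,i}$, the Plücker relation indexed by $(1,i,i+1,i+2)$ yields the three-term recurrence
\[
a_{i+2}=B_\epsilon\,a_{i+1}-a_i,\qquad a_1=0,\ a_2=1,
\]
where $\epsilon=o$ when $i$ is odd and $\epsilon=e$ when $i$ is even. The core technical step is an induction on $k$ showing that $a_{2k}$ is a polynomial in $Y:=B_oB_e$ alone, while $a_{2k+1}=B_o\,Q_k(Y)$ for some polynomial $Q_k$; this is the main obstacle, since one must check that the alternating coefficients conspire at every even step to yield an expression symmetric in $B_o$ and $B_e$. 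Running the same argument from $G_{2,i}$ gives the mirror statement with $B_o$ and $B_e$ exchanged.

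The wraparound equations then read $G_{1,m}=\alpha$, $G_{1,m-1}=\beta_o$ and $G_{2,m}=\beta_e$, which by the inductive claim (after dividing by the appropriate $B_o$ or $B_e$, nonzero in the non-degenerate case) become polynomial conditions in $Y$ alone. They are nontrivial, since the actual $n$-fold slope is a solution with $Y=\sin^{2}(2\pi/m)/\sin^{2}(\pi/m)$, so their common zero set is a finite set $\{Y_1,\dots,Y_s\}$. For each $Y_j$ the locus $B_oB_e=Y_j$ is a hyperbola in $(B_o,B_e)$-space, and by construction every point on it satisfies all Plücker and subperiod equations; hence the full solution set is a finite union of hyperbolas, i.e.\ a one-dimensional variety. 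Finally, for the exceptional case $n=12p$ one has to verify that the extra subperiods $G_{i,i+3p}=2G_{i,i+p}$ and $G_{i,i+3p}=2G_{i,i+5p}$ also reduce, via the same inductive structure, to conditions depending only on $Y$, so that they merely single out admissible $Y_j$ rather than fix $B_o$ and $B_e$ individually.
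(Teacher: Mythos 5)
Your proposal is correct and takes essentially the same route as the paper: split the $G_{i,i+2}$ chain by parity into two values $B_o,B_e$, extract the alternating three-term recurrence from the Plücker relations $(1,i,i+1,i+2)$, and show that the even-index terms depend only on $Y=B_oB_e$, so that the closing conditions constrain only $Y$ and leave a one-parameter (hyperbola) family, with Lemma~\ref{lem:G12G13} supplying the rest; the paper merely packages your parity induction as the substitution $X^{2k}\mapsto X^kY^k$, $X^{2k+1}\mapsto X^{k+1}Y^k$ in the Chebyshev polynomials of the second kind. One small repair: finiteness of the admissible values of $Y$ follows because the recurrence makes $a_m$ a non-constant polynomial in $Y$ (of degree $(m-2)/2$), not because the $n$-fold slope happens to satisfy the equation --- exhibiting a solution shows consistency, not nontriviality.
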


\begin{proof}
Let $m:=n$ if $n$ is odd, or $m:=n/2$ if $n$ is even.
Subperiods enforce
$$
G_{12}=G_{23}=G_{34}=\ldots=G_{m-1,m}=G_{m,m+1}.
$$
Since $4$ divides $n$, $m$ is even, and subperiods now only enforce
$$
G_{13}=G_{35}=\ldots=G_{m-1,m+1}
\quad\textrm{and}\quad
G_{24}=G_{46}=\ldots=G_{m,m+2}.
$$
With $X:=G_{13}/(2G_{12})$, $Y:=G_{24}/(2G_{12})$ and $U_i:=G_{1,i+2}/G_{12}$, the relation
$$
G_{1,i}G_{i+1,i+2}=G_{1,i+1}G_{i,i+2}-G_{1,i+2}G_{i,i+1}
$$
now yields the recurrence relation
$$
U_0=1,\quad
U_1=2X,\quad
U_{2i}=2Y U_{2i-1}-U_{2i-2},\quad
U_{2i+1}=2X U_{2i}-U_{2i-1}.
$$
Hence $U_i$ is obtained from the $i$-th Chebyshev polynomial of the second kind by replacing $X^{2k+1}$ by $X^{k+1}Y^k$ and $X^{2k}$ by $X^kY^k$.
In particular, since $m-2$ is even, $U_{m-2}$ contains only powers of $XY$, so that $XY$ is the square of a solution of $U_{m-2}=G_{1,m}/G_{12}=1$.
The one-dimensionality follows from Lemma~\ref{lem:G12G13}.
\end{proof}

The two previous propositions addressed the question of whether the subperiods of a {\em planar} rhombus tilings enforce a particular slope or not.
Now, we want to determine whether subperiods enforce planarity itself:

\begin{proposition}\label{prop:n_fold_planarity}
If $5$, $7$, $8$ or $12$ divides $n$, then the subperiods of the $n$-fold tilings enforce planarity.
\end{proposition}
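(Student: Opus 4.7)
The plan is to mirror the strategy used in the generalized Penrose example (Section~\ref{sec:generalized_penrose}) and in the cubic dodecagonal example (Section~\ref{sec:cubic_dodecagonal}). Given a tiling $\mathcal{T}$ whose subperiods are those of the $n$-fold slope, I would exhibit a family of $4$-element index subsets $I \subset \{1,\ldots,m\}$ (with $m=n$ for odd $n$ and $m=n/2$ for even $n$) such that, for each $I$, the orthogonal projection $\pi_I(\mathcal{T})$ is a codimension-two rhombus tiling whose inherited subperiods satisfy the hypothesis of Theorem~\ref{th:planarity_codim2}. This will give a plane $E_I \subset \mathbb{R}^I$ from which $\pi_I(\mathcal{T})$ stays at bounded distance, and hence $\mathcal{T}$ itself stays at bounded distance from the affine subspace $V_I := E_I \times \prod_{j \notin I} \mathbb{R}\vec{e}_j$. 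The final step is to check that the intersection $V := \bigcap_I V_I$, which still contains the lift at bounded distance, is two-dimensional; this yields the planarity of $\mathcal{T}$.

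To choose the subsets $I$, I would exploit the cyclic structure of $n$-fold tilings. For each of the four base divisibilities, a natural template $I_0$ presents itself: $I_0 = \{1,2,3,4\}$ when $5 \mid n$ or $7 \mid n$; $I_0 = \{1, 1+n/8, 1+n/4, 1+3n/8\}$ when $8 \mid n$ (so that the projected slope is, up to rotation, the Ammann-Beenker one); and $I_0 = \{1,2,3,4\}$ combined with $\{1,2,3,5\}$-type choices exploiting the extra rational subperiods $G_{i,i+3p} = 2 G_{i, i+p}$ when $12 \mid n$. The family of $I$'s is then generated by cyclic shifts of $I_0$ modulo $m$. For each $I$, the required subperiods of $\pi_I(\mathcal{T})$ are obtained by restricting the $n$-fold subperiod relations $G_{ij} = G_{j,2j-i}$ to indices lying in $I$, and the hypothesis of Theorem~\ref{th:planarity_codim2} (three shadow-unique subperiods lifting to pairwise non-collinear vectors in an irrational non-degenerated plane) can be read off from the $5$-, $7$-, $8$- or $12$-fold structure as in the worked examples.

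Computing the dimension of $V$ amounts to solving a linear system in $\mathbb{R}^m$: the membership of a point $\vec{x} = (x_1, \ldots, x_m)$ in $V_I$ forces each $x_j$ with $j \in I$ to be expressible in terms of two free parameters using the basis of $E_I$. Collecting these constraints over all cyclic shifts of $I_0$ yields recurrence relations on the coordinates $x_j$ which are essentially the Chebyshev recurrences of Propositions~\ref{prop:n_fold_dim0} and~\ref{prop:n_fold_dim1}: for odd $n$ (in particular $5 \mid n$ or $7 \mid n$), the recurrence propagates from any two consecutive coordinates to determine all others, forcing $\dim V \leq 2$; for $8 \mid n$ or $12 \mid n$, the additional relations built into the base template $I_0$ (respectively the codimension-two collapse of the Ammann-Beenker slope and the rational subperiods of the cubic dodecagonal slope) play the role of the extra constraints identified in those examples, collapsing the would-be one-parameter family from Proposition~\ref{prop:n_fold_dim1} down to dimension two.

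The main obstacle will be the case-by-case verification of the linear-algebra step. For $5 \mid n$ and $12 \mid n$ the computation is almost verbatim that of the generalized Penrose and cubic dodecagonal examples. The genuinely new ingredients are: for $7 \mid n$, verifying that the Chebyshev recurrence associated with projections $\{j, j+1, j+2, j+3\}$ is non-degenerate at order $7$ (which follows from Proposition~\ref{prop:n_fold_dim0}); and for $8 \mid n$, correctly embedding the Ammann-Beenker argument into the larger ambient $\mathbb{R}^{n/2}$ while making sure that the $k$ cyclic shifts of the template $I_0$ together pin down all coordinates. Once the rank of the combined system is checked, the conclusion $\dim V = 2$ is immediate, and planarity of $\mathcal{T}$ with slope $V$ follows from Theorem~\ref{th:planarity_codim2} applied to each projection together with the uniform Lipschitz bound on lifts recalled in Section~\ref{sec:rhombus_tilings}.
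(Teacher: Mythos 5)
Your overall architecture is exactly the paper's: project onto well-chosen $4$-element index sets, apply Theorem~\ref{th:planarity_codim2} to each projection, intersect the resulting cylinders $\widetilde{S}_I$, and show the intersection is two-dimensional via a linear recurrence on the coefficients. The gap is in the choice of the index sets, which is the actual content of the proof. Your template $I_0=\{1,2,3,4\}$ for $5\mid n$ and $7\mid n$ (and your consecutive-window variants for $12\mid n$) does not supply enough subperiods to the projected tiling. The only $n$-fold relations $G_{ij}=G_{j,2j-i}$ whose three indices all lie in a window of four consecutive indices are $G_{i,i+1}=G_{i+1,i+2}$; the second chain $G_{i,i+2}=G_{i+2,i+4}$ always drags in an index outside the window unless it wraps around, which happens only for $n=5$. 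So for $n=15$, say, the projection onto $\{1,2,3,4\}$ inherits subperiods in only the $123$- and $234$-shadows, one short of the three shadows that Theorem~\ref{th:planarity_codim2} requires; the same failure occurs for $\{1,2,3,4\}$ already at $n=7$, where the needed third relation $G_{13}=G_{24}$ involves four distinct indices and is therefore not a subperiod of any shadow.

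The fix, which is what the paper does, is to take windows spaced by $p$ (where $m=5p$, $7p$, $4p$ or $6p$): namely $R_i=\{i,\,p+i,\,2p+i,\,ap+i\}$ with $a=3$ for $m\in\{4p,5p,6p\}$ and $a=4$ for $m=7p$. With this spacing the composite relations close up inside the window — e.g.\ for $m=5p$ one gets $G_{i,p+i}=G_{p+i,2p+i}=G_{2p+i,3p+i}$ together with $G_{i,2p+i}=G_{3p+i,i}$, each involving only three of the four selected indices, so the projection is genuinely Penrose-like and Theorem~\ref{th:planarity_codim2} applies. (Your template for $8\mid n$ is the correct one, and for $12\mid n$ the extra rational relation $G_{i,3p+i}=2G_{i,p+i}$ indeed plays the role you assign it, but again only in the $p$-spaced window $\{i,p+i,2p+i,3p+i\}$.) A secondary imprecision: the final dimension count is not the Chebyshev recurrence on Grassmann coordinates from Propositions~\ref{prop:n_fold_dim0} and~\ref{prop:n_fold_dim1} (which settles which slopes are allowed), but a separate second-order recurrence on the coefficients $\lambda_i$ expressing a point of $V$ in the bases of the slopes $S_i$; conflating the two hides the fact that planarity and slope rigidity are independent steps, which is precisely why the $8\mid n$ and $12\mid n$ cases end up planar but with a one-parameter family of slopes.
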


\begin{proof}
Let $m:=n$ if $n$ is odd, or $m:=n/2$ otherwise.
For $1\leq i\leq m$, one has
\begin{itemize}
\item For $m=4p$:
$$
G_{i,p+i}=G_{p+i,2p+i}=G_{2p+i,3p+i}=G_{i,3p+i}.
$$
\item For $m=5p$:
$$
G_{i,p+i}=G_{p+i,2p+i}=G_{2p+i,3p+i}
\qquad
G_{i,2p+i}=G_{3p+i,i}.
$$
\item For $m=6p$:
$$
G_{i,p+i}=G_{p+i,2p+i}=G_{2p+i,3p+i}
\qquad
G_{i,3p+i}=2G_{i,p+i}.
$$
\item For $m=7p$:
$$
G_{i,p+i}=G_{p+i,2p+i}
\qquad
G_{i,2p+i}=G_{2p+i,4p+i}
\qquad
G_{p+i,4p+i}=G_{4p+i,i}.
$$
\end{itemize}
Since each of the above equalities involves only three different indices, it cor\-res\-ponds to a subperiod.
Moreover, these subperiods lift in the slope of the $n$-fold tiling onto pairwise non-collinear vectors.
Hence, if $\mathcal{T}$ is a tiling with these subperiods and $\mathcal{S}$ a lift of it, then Theorem~\ref{th:planarity_codim2} yields the planarity of the projections of $\mathcal{S}$ onto the four-dimensional space (indices are taken modulo $m$)
$$
R_i:=\mathbb{R}\vec{e}_i+\mathbb{R}\vec{e}_{p+i}+\mathbb{R}\vec{e}_{2p+i}+\mathbb{R}\vec{e}_{ap+i},
$$
where $a=4$ for $m=7p$ or $a=3$ for $m\in\{4p,5p,6p\}$.
Let $S_i$ be the slope of the projection of $\mathcal{S}$ onto $R_i$.
Let also $\widetilde{S}_i$ be the $(m-2)$-dimensional vectorial space whose projection onto $R_i$ is $S_i$.
The lift $\mathcal{S}$ stays at bounded distance from the vectorial space $V:=\widetilde{S}_1\cap\ldots\cap\widetilde{S}_m$.
We shall show $\dim V=2$ to prove the planarity of $\mathcal{S}$.
The slope $S_i$ turns out to be generated by the vectors
$$
\vec{u}_i:=(-G_{i,p+i},0,G_{p+i,2p+i},G_{p+i,ap+i})
\quad\textrm{and}\quad
\vec{v}_i:=(0,G_{i,p+i},G_{i,2p+i},G_{i,ap+i}).
$$
Consider $\vec{x}=(x_1,\ldots,x_m)\in V$.
For each $i$, there are reals $\lambda_i$ and $\mu_i$ such that the projection of $\vec{x}$ onto $R_i$ writes $\lambda_i\vec{u}_i+\mu_i\vec{v}_i$.
In particular, this yields
\begin{eqnarray*}
x_i &=& -\lambda_i G_{i,p+i},\\
x_{p+i} &=& \mu_i G_{i,p+i} = -\lambda_{p+i} G_{p+i,2p+i},\\
x_{2p+i} &=& \lambda_i G_{p+i,2p+i} +\mu_i G_{i,2p+i} =  \mu_{p+i} G_{p+i,2p+i} = -\lambda_{2p+i} G_{2p+i,ap+i},
\end{eqnarray*}
whence the second order recurrence relation on the $\lambda_i$'s:
$$
-G_{2p+i,3p+i}\lambda_{2p+i}=-G_{i,2p+i}\lambda_{p+i}+G_{i,p+i}\lambda_i.
$$
Once two of the $\lambda_i$'s are fixed, all the other ones are thus uniquely determined, and then the $\mu_i$'s by $\mu_i G_{i,p+i} = -\lambda_{p+i} G_{p+i,2p+i}$.
This proves $\dim V=2$.
\end{proof}

In particular, if $n$ is an odd multiple of $5$ or $7$, then Prop. \ref{prop:subperiod_rules} ensures that one can enforce by local rules subperiods which in turn enforce planarity by Prop.~\ref{prop:n_fold_planarity} (because $n$ is a multiple of $5$ or $7$), and the slope by Prop.~\ref{prop:n_fold_dim0} (because $n$ is odd):

\begin{corollary}\label{cor:local_rules_n_fold}
The $n$-fold tilings admit local rules for odd $n$ multiple of $5$ or $7$.
\end{corollary}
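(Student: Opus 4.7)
The plan is to chain together the three preceding results: Proposition \ref{prop:subperiod_rules} provides the mechanism for encoding subperiods as local rules, Proposition \ref{prop:n_fold_planarity} guarantees that these subperiods enforce planarity when $5$ or $7$ divides $n$, and Proposition \ref{prop:n_fold_dim0} guarantees that the resulting polynomial system has only finitely many solutions when $4$ does not divide $n$ (which holds automatically for odd $n$). Fixing an odd $n$ divisible by $5$ or $7$ takes care of both divisibility hypotheses simultaneously, so the three propositions apply in parallel to the $n$-fold slope $E$.

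First I would enumerate the subperiods of $E$ given in Section \ref{sec:n_fold} and apply Proposition \ref{prop:subperiod_rules} to each, obtaining a common diameter $R_0$ and a finite set $\mathcal{B}$ of $R_0$-maps such that any rhombus tiling whose $R_0$-atlas is contained in $\mathcal{B}$ inherits every subperiod of $E$. Proposition \ref{prop:n_fold_planarity} then forces every such tiling to be planar with a uniformly bounded thickness, and its slope must satisfy both the Plücker relations and the linear equations associated with those subperiods. Proposition \ref{prop:n_fold_dim0} reduces this combined polynomial system to a zero-dimensional one, so only finitely many candidate slopes $E=E_1,\ldots,E_s$ remain.

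The last step is to isolate $E$ among these finitely many candidates. As remarked at the end of Section \ref{sec:grassmanniennes}, this can always be accomplished by enlarging the diameter of the local rules: two distinct planar slopes of uniformly bounded thickness cannot share $r$-atlases for every $r$, because the slope of a planar tiling is unique (Section \ref{sec:rhombus_tilings}). Thus, for each $j\geq 2$ one can find at least one finite pattern present in some planar tiling of slope $E_j$ but absent from all planar tilings of slope $E$; forbidding these finitely many extra patterns produces local rules of some diameter $R\geq R_0$ that enforce both planarity and the slope $E$, as required by Definition \ref{def:local_rules}.

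I expect the main obstacle to be bookkeeping rather than genuine difficulty. The subtle point is making sure that the separation step terminates with a finite diameter; this is immediate because $s$ is finite, so only finitely many additional forbidden patterns are needed. No new geometric ingredient is required beyond what the three preceding propositions already provide, and the diameter is controlled by the largest lift of a subperiod together with the finitely many separating patterns.
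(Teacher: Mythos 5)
Your proposal is correct and follows essentially the same route as the paper, which derives the corollary by chaining Proposition~\ref{prop:subperiod_rules} (subperiods are enforceable by local rules), Proposition~\ref{prop:n_fold_planarity} (they enforce planarity since $5$ or $7$ divides $n$), and Proposition~\ref{prop:n_fold_dim0} (zero-dimensionality since $n$ is odd, hence not a multiple of $4$). Your additional step of separating $E$ from the finitely many other candidate slopes by enlarging the diameter is exactly the remark the paper makes at the end of Section~\ref{sec:grassmanniennes}, so nothing is missing or different in substance.
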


Actually, it is known that $n$-fold tilings admit local rules for any $n$ which is not a multiple of $4$ \cite{socolar}.
Here, we managed to show that there are local rules which enforce the $n$-fold symmetry of a planar tiling for any $n$ which is not a multiple of $4$ (Prop.~\ref{prop:n_fold_dim0}), but our general method failed to show that local rules can also enforce planarity in all these cases (Prop.~\ref{prop:n_fold_planarity}).
This is because we relied on a codimension two result (Th.~\ref{th:planarity_codim2}) whereas a general characterization of planarity in any codimension remains to be found (which should in particular apply to $n$-fold tilings).
Nevertheless, Propositions~\ref{prop:n_fold_dim1} and \ref{prop:n_fold_planarity} deal with cases that are not considered in \cite{socolar}, namely $n$-fold tilings when $n$ is a multiple of $8$ or $12$.
Indeed, these propositions allow to show that the slope of such $n$-fold tilings can be obtained as the solution of a simple optimization problem, reminding discussions about {\em optimal cluster covering} for non-periodic tilings and what is sometimes referred to as {\em maxing rules} (see, {\em e.g}, \cite{maxing1,maxing2,maxing3}):

\begin{proposition}\label{prop:even_fold_minimization}
When $n$ is a multiple of $8$ or $12$, there are local rules such that the $n$-fold tilings are the tilings satisfying these local rules and minimizing the proportion of tiles $T_{i,i+2}$ (for all $i$).
\end{proposition}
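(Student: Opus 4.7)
The plan is to enforce the subperiods of the $n$-fold slope via local rules and then let the minimization condition pick out the $n$-fold slope from the resulting one-parameter family of allowed slopes. By Prop.~\ref{prop:subperiod_rules} the $n$-fold subperiods can be enforced by local rules; since $n$ is a multiple of $8$ or $12$, Prop.~\ref{prop:n_fold_planarity} then guarantees that every tiling satisfying these rules is planar, and Prop.~\ref{prop:n_fold_dim1} (which applies since $4\mid n$) identifies the allowed slopes with the positive branch of the hyperbola $XY=c$, where $G_{12}=1$, $X:=G_{13}/2$, $Y:=G_{24}/2$, and $c=\cos^2(2\pi/n)$ is the common value for the $n$-fold slope. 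The $n$-fold slope itself is the fixed point $X=Y=\sqrt c$ of this family.

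I will then show that on this hyperbola the total frequency of the $T_{i,i+2}$ tiles is a nondecreasing function of $s:=X+Y$. By Prop.~\ref{prop:frequences}, this frequency is $N/S$ with $N=\sum_{i=1}^{m-2}|G_{i,i+2}|$ and $S=\sum_{k<l}|G_{kl}|$. The Chebyshev-like recursion of Prop.~\ref{prop:n_fold_dim1} makes $U_d$ a polynomial in $XY$ alone when $d$ is even, and $X$ times such a polynomial when $d$ is odd; combined with the cyclic shift of indices, which swaps $X\leftrightarrow Y$ while permuting the pairs $(k,l)$, this gives that each $|G_{k,k+d}|$ with $d$ odd depends only on $c$, while each with $d$ even equals $|X|$ or $|Y|$ times a function of $c$ according to the parity of $k$. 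Summing contributions yields $N=(m-2)\,s$ and $S=A(c)+B(c)\,s$ with $A(c),B(c)\ge 0$; the extra subperiods $G_{i,i+3p}=2G_{i,i+p}=2G_{i,i+5p}$ present when $n=12p$ only add constants to $A(c)$ and so fit the same scheme.

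The ratio $N/S=(m-2)s/(A(c)+B(c)s)$ then has nonnegative $s$-derivative $(m-2)A(c)/(A(c)+B(c)s)^2$, hence is minimized precisely when $s$ is. On the hyperbola $XY=c$, AM-GM gives $s\ge 2\sqrt c$ with equality if and only if $X=Y=\sqrt c$, so the minimum of $N/S$ is attained exactly at the $n$-fold slope, which gives the claimed characterization. The main technical point I expect is the verification of the cyclic symmetry identifying shift of indices by one with the swap $X\leftrightarrow Y$: once this is cleanly established, the symmetry of $S$ as a polynomial in $(X,Y)$, its expressibility as a polynomial in $(s,c)$, and the linearity in $s$ all follow from the degree structure of the $U_d$'s, while the extra care needed for the $n=12p$ case amounts to checking that the additional equal-coordinate relations preserve the parity dichotomy used above.
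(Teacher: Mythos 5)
Your proof is correct and follows the same route as the paper's: planarity via Prop.~\ref{prop:n_fold_planarity}, the one-parameter family with $XY$ constant via Prop.~\ref{prop:n_fold_dim1}, and AM--GM to identify the $n$-fold slope as the point of the hyperbola minimizing $X+Y$. You are in fact more careful than the paper on the last step: the paper simply invokes Prop.~\ref{prop:frequences} to equate ``minimal $X+Y$'' with ``minimal proportion of tiles $T_{i,i+2}$'', whereas you verify that the normalizing denominator $\sum_{k<l}|G_{kl}|$ has the form $A(c)+B(c)(X+Y)$ along the family, so that the frequency is genuinely a monotone function of $X+Y$ --- a point the paper leaves implicit.
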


\begin{proof}
The planarity (and thus the existence of tile frequencies) is ensured by Prop.~\ref{prop:n_fold_planarity}.
In Prop.~\ref{prop:n_fold_dim1}, we saw that in such a case the product $XY$ is constant, where $X=G_{i,i+2}$ for odd $i$ and $Y=G_{i,i+2}$ for even $i$.
The $n$-fold tilings correspond to $X=Y$.
This happens for $X+Y$ minimal, that is, according to Prop.~\ref{prop:frequences}, for the minimal proportion of tiles $T_{i,i+2}$.
 \end{proof}

\begin{figure}[hbtp]
\includegraphics[width=0.31\textwidth]{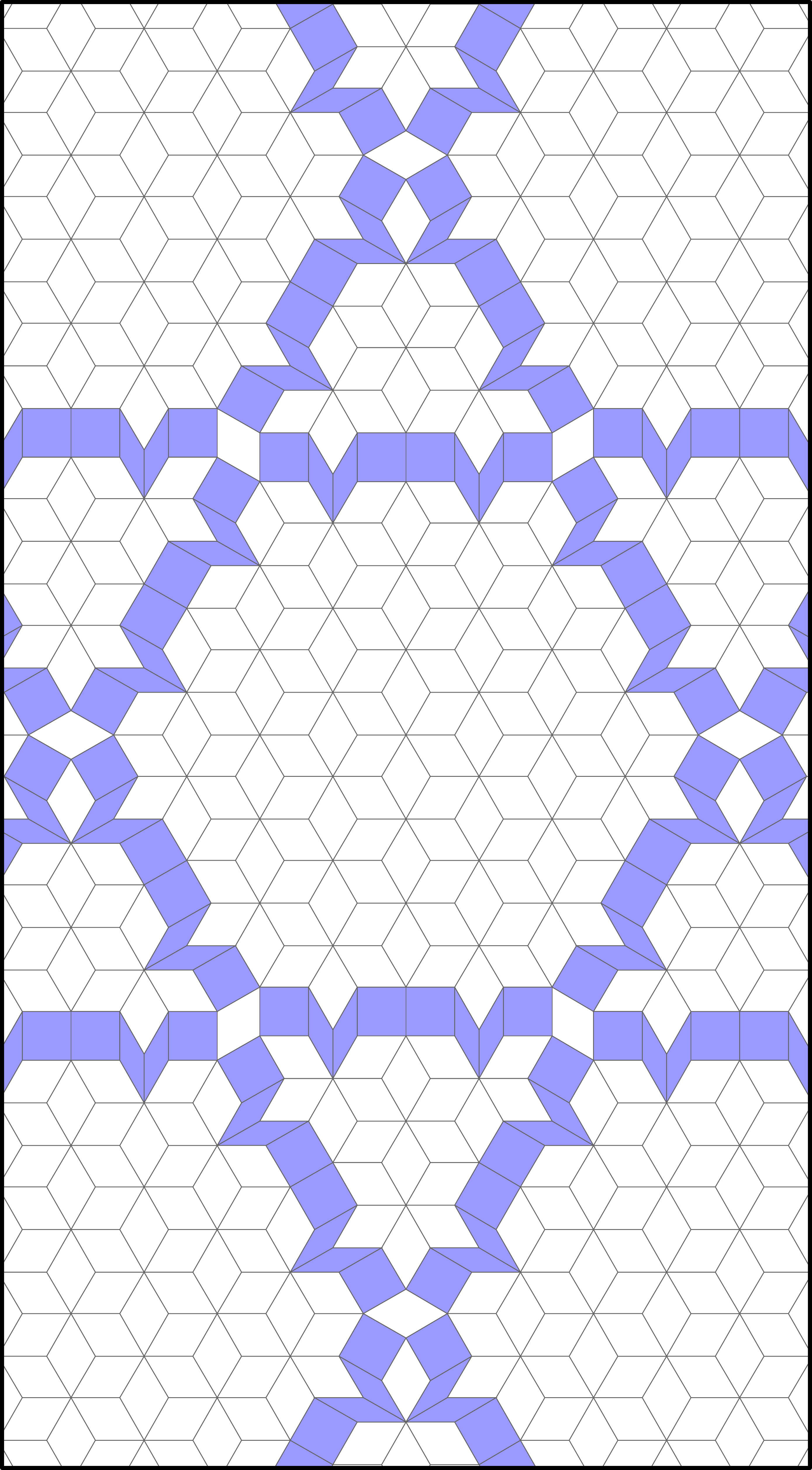}
\hfill
\includegraphics[width=0.31\textwidth]{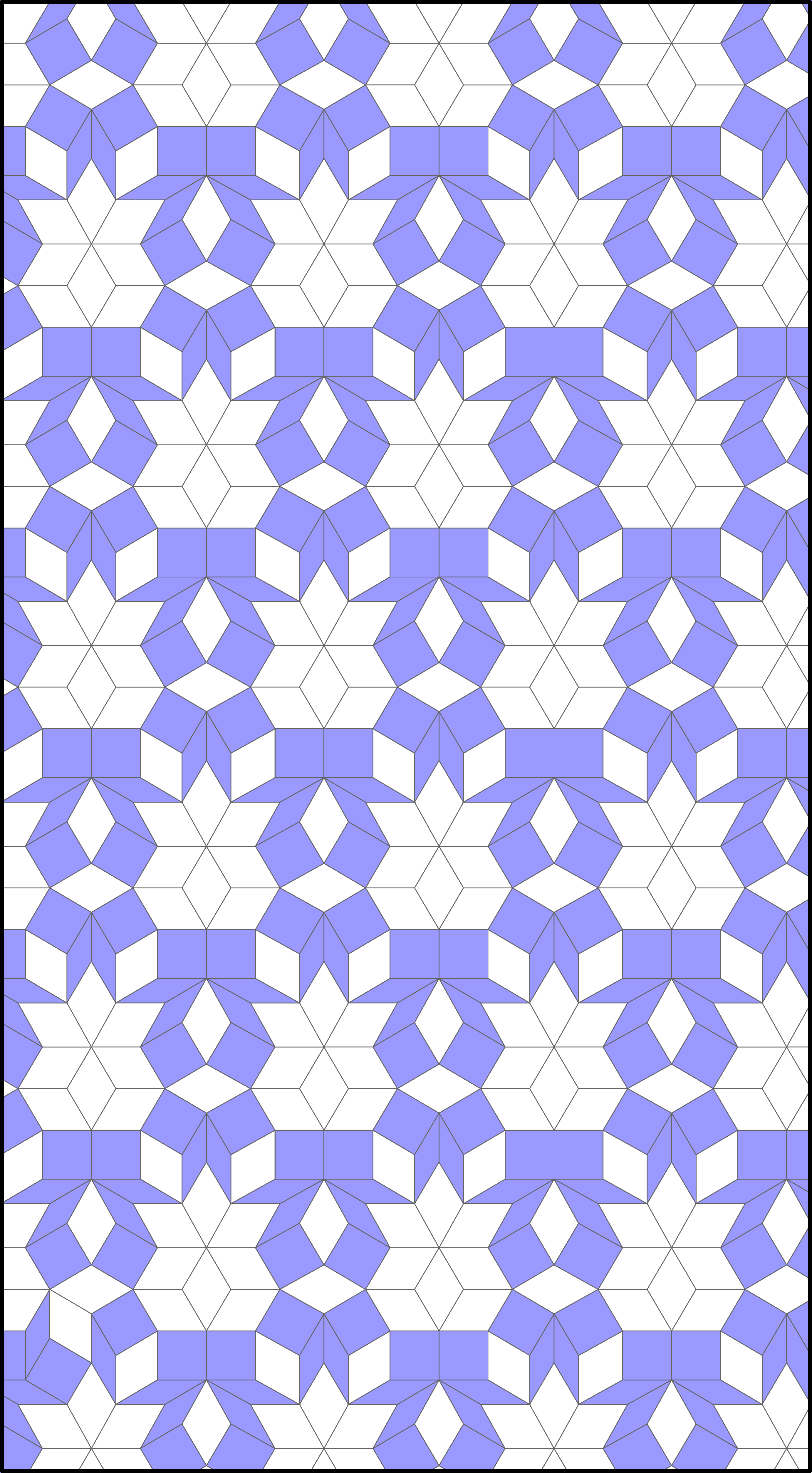}
\hfill
\includegraphics[width=0.31\textwidth]{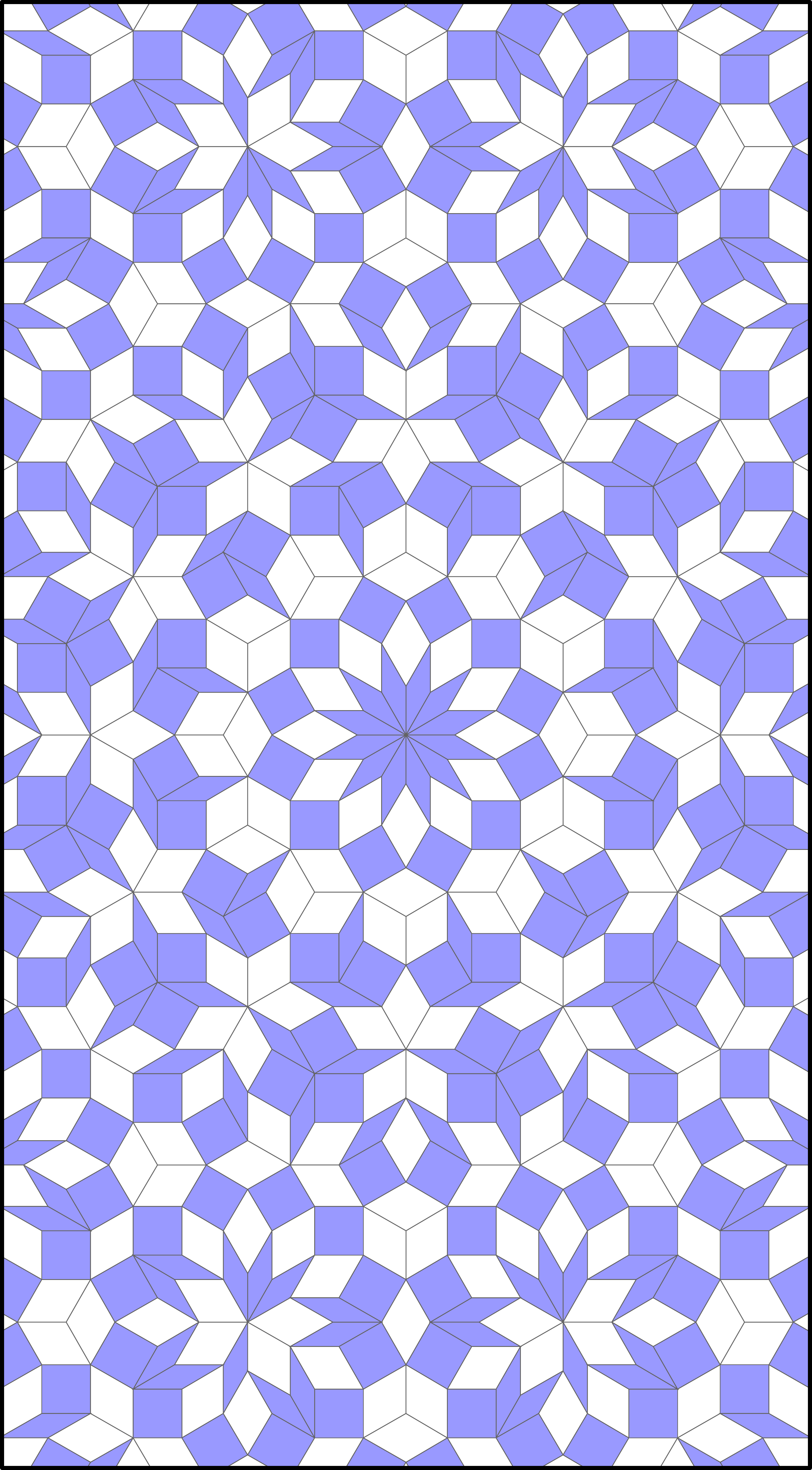}
\caption{
The $12$-fold tilings (rightmost) has subperiods which enforce planarity but allow a one-parameter family of tilings ({\em e.g.} the leftmost and central ones).
Among these tilings, the $12$-fold minimize the proportion of white tiles.
}
\label{fig:dodecagonal}
\end{figure}

We retrieve the fact that Ammann-Beenker $8$-fold tilings are characterized by subperiods and minimization of the proportion of square tiles (end of Section~\ref{sec:ammann_beenker}).
Fig.~\ref{fig:dodecagonal} depicts the $12$-fold case.\\

Actually, one can see in the proof of Prop.~\ref{prop:n_fold_planarity} that only a subset of the subperiods are used to prove the planarity of the $n$-fold tilings.
Hence, if we define local rules that enforce these subperiods but not necessarily those used to prove the zero- or one-dimensionality of the system (Prop.~\ref{prop:n_fold_dim0} or \ref{prop:n_fold_dim1}), then we can get local rules allowing a many-parameters family of planar rhombus tilings, among which $n$-fold tilings satisfy a similar optimization problem.
This could be meaningful in the context of quasicrystal modelization if we assume that complicated local rules means rather implausible atom arrangements, while minimizing tile proportions simply means playing with molecular concentrations.

\thebibliography{bla}
\addcontentsline{toc}{section}{References}
\bibitem{A5} R.~Ammann, B.~Grünbaum, G.~C.~Shephard, {\em Aperiodic tiles}, Disc. Comput. Geom. {\bf 8} (1992), pp. 1--25.

\bibitem{BF} N.~Bédaride, Th.~Fernique, {\em Ammann-Beenker tilings revisited}, in Aperiodic Crystals, S.~Schmid, R.~L.~Withers, R.~Lifshitz eds (2013), pp. 59--65.

\bibitem{beenker} F.~P.~M.~Beenker, {\em Algebric theory of non periodic tilings of the plane by two simple building blocks: a square and a rhombus}, TH Report 82-WSK-04 (1982), Technische Hogeschool, Eindhoven.

\bibitem{berger} R.~Berger, {\em The undecidability of the domino problem}, Ph.D. thesis, Harvard University, July 1964.

\bibitem{debruijn} N.~G.~de Bruijn, {\em Algebraic theory of Penrose's nonperiodic tilings of the plane}, Nederl. Akad. Wetensch. Indag. Math. {\bf 43} (1981), pp. 39--66.

\bibitem{burkov} S.~E.~Burkov, {\em Absence of weak local rules for the planar quasicrystalline tiling with the 8-fold rotational symmetry}, Comm. Math. Phys. {\bf 119} (1988), pp. 667--675.

\bibitem{FS} Th.~Fernique, M.~Sablik, {\em Local rules for computable planar tilings}, in Proc. 3rd international symposium JAC (2012), pp. 133--141.

\bibitem{kellendonk} A.~H.~Forrest, J.~R.~Hunton, J.~Kellendonk, {\em Topological invariants for projection method patterns}, Memoirs of the AMS {\bf 159}, 2002.

\bibitem{GR} F.~Gähler, J.~Rhyner, {\em Equivalence of the generalized grid and projection methods for the construction of quasiperiodic tilings}, J. Phys. A: Math. Gen. {\bf 19} (1986), pp. 267--277.

\bibitem{maxing1} F.~Gähler, P.~Gummelt, S.~I.~Ben-Abraham, {\em Generation of quasiperiodic order by maximal cluster covering}, in Coverings of discrete quasiperiodic sets, P.~Kramer, Z.~Papadopolos eds (2003), pp. 63--95.

\bibitem{GS} B.~Gr\"unbaum, G.~C.~Shephard, {\em Tilings and patterns}, Freemann, NY 1986.

\bibitem{maxing2} C.~L.~Henley, {\em Cluster maximization, non-locality, and random tilings}, in proc. 6th Int. Conf. on Quasicrystals, S.~Takeuchi, T.~Fujiwara eds (1998), pp. 27--30.

\bibitem{HoP} W.~V.~D.~Hodge, D.~Pedoe, {\em Methods of algebraic geometry}, vol. 1, Cambridge University Press, Cambridge, 1984.

\bibitem{belov} I.~A.~ Ivanov-Pogodaev, A.~Ya.~Kanel-Belov, private communication.

\bibitem{maxing3} H.-C.~Jeong, P.~J.~Steinhardt, {\em Cluster approach for quasicrystals}, Phys. Rev. Lett. {\bf 73} (1994), pp. 1943--1946.

\bibitem{katz} A.~Katz, {\em Matching rules and quasiperiodicity: the octagonal tilings}, in Beyond Quasicrystals, F.~Axel, D.~Gratias eds (1995), pp. 141-189.

\bibitem{KP} M.~Kleman, A.~Pavlovitch {\em Generalized 2D Penrose tilings: structural properties}, J. Phys. A: Math. Gen. {\bf 20} (1987), pp. 687--702.

\bibitem{le92} T.~Q.~T.~Le, S.~A.~Piunikhin, V.~A.~Sadov, {\em Local rules for quasiperiodic tilings of quadratic 2-Planes in $\mathbb{R}^4$}, Commun. Math. Phys. {\bf 150} (1992), pp. 23--44.

\bibitem{le92b} T.~Q.~T.~Le, {\em Local structure of quasiperiodic tilings having 8-fold symmetry}, preprint, 1992.

\bibitem{le92c} T.~Q.~T.~Le, {\em Necessary conditions for the existence of local rules for quasicrystals}, preprint (1992).

\bibitem{le93} T.~Q.~T.~Le, S.~A.~Piunikhin, V.~A.~Sadov, {\em The Geometry of quasicrystals}, Russian Math. Surveys {\bf 48} (1993), pp. 37--100.

\bibitem{le95} T.~Q.~T. Le, {\em Local rules for pentagonal quasi-crystals}, Disc. {\&} Comput. Geom. {\bf 14}, pp. 31--70 (1995).

\bibitem{le95b} T.~Q.~T.~Le, {\em Local rules for quasiperiodic tilings} in The mathematics long range aperiodic order, NATO Adv. Sci. Inst. Ser. C. Math. Phys. Sci. 489: 331--366 (1995).

\bibitem{LS} D.~Levine, P.~J.~Steindhardt, {\em Quasicrystals: A new class of ordered structure}, Phys. Rev. Lett. {\bf 53} (1984), pp. 2477--2480.

\bibitem{levitov} L.~S.~Levitov, {\em Local rules for quasicrystals}, Comm. Math. Phys. {\bf 119} (1988), pp. 627--666.

\bibitem{penrose2} R.~Penrose, {\em Pentaplexity}, Eureka {\bf 39} (1978), pp. 16--32.

\bibitem{robinson2} A.~Robinson, {\em Symbolic dynamics and tilings of $\mathbb{R}^d$}, Symbolic dynamics and its applications, Proc. Sympos. Appl. Math., {\bf 60}, Amer. Math. Soc., Providence, RI, 2004, pp. 81--119.

\bibitem{shechtman} D. Shechtman, I. Blech, D. Gratias, J. W. Cahn, {\em Metallic phase with long-range orientational symmetry and no translational symmetry}, Phys. Rev. Let. {\bf 53}, pp. 1951--1953 (1984).

\bibitem{socolar} J.~E.~S.~Socolar, {\em Weak matching rules for quasicrystals}, Comm. Math. Phys. {\bf 129} (1990), pp. 599--619.

\bibitem{wang} H. Wang, {\em Proving theorems by pattern recognition II}, Bell Systems Tech. J. {\bf 40} (1961), pp. 1--41.

\end{document}